\newcommand\rank{{\operatorname{rank}}}
\renewcommand\P{{\mathbf{P}}}
\newcommand\E{{\mathbf{E}}}
\newcommand\tr{{\operatorname{tr}}}
\newcommand\M{{\operatorname{M}}}
\newcommand\Z{{\mathbf{Z}}}
\newcommand\F{{\mathbf{F}}}
\newcommand\col{{\mathbf{c}}}
\newcommand\row{{\mathbf{r}}}
\newcommand\al{\alpha}
\newcommand\Ba{{\mathbf a}}
\newcommand\Be{{\mathbf e}}
\newcommand\Bv{{\mathbf v}}
\newcommand\Bw{{\mathbf w}}
\newcommand\Bx{{\mathbf x}}
\newcommand\BX{{\mathbf X}}
\renewcommand\Pr{{\mathbf P }}
\newcommand\CE{{\mathcal E}}
\newcommand\CF{{\mathcal F}}
\newcommand\supp{\mathbf{supp}}
\newcommand\eps{\varepsilon}
\newcommand\codim{\operatorname{codim}}
\newcommand\lang{\langle}
\newcommand\rang{\rangle}
\newcommand\bs{\backslash}
\newcommand\rk{{\operatorname{rank}}}
\newcommand\cork{{\operatorname{corank}}}
\newcommand\GL{\operatorname{GL}}
\newcommand\CB{{\mathcal B}}
\newcommand{\Mat}{\operatorname{Mat}}
\newcommand{\un}{\operatorname{uniform}}
\theoremstyle{plain}
  \newtheorem{theorem}[subsection]{Theorem}
  \newtheorem{prop}[subsection]{Proposition}
  \newtheorem{proposition}[subsection]{Proposition}
  \newtheorem{lemma}[subsection]{Lemma}
  \newtheorem{corollary}[subsection]{Corollary}
  \newtheorem{condition}{Condition}
  \newtheorem{claim}[subsection]{Claim}
\theoremstyle{definition}
  \newtheorem{definition}[subsection]{Definition}
\begin{document}

\title{Rank of Near Uniform Matrices}

\author{Jake Koenig}
\author{Hoi H. Nguyen}

\address{Department of Mathematics\\ The Ohio State University \\ 231 W 18th Ave \\ Columbus, OH 43210 USA}
\email{koenig.427@osu.edu}
\email{nguyen.1261@osu.edu}

\thanks{The authors are supported by NSF grant DMS-1752345.}

\maketitle
\begin{abstract} 
	A central question in random matrix theory is universality. When an emergent phenomena is observed from a large collection of chosen random variables it is natural to ask if this behavior is specific to the chosen random variable or if the behavior occurs for a larger class of random variables.

	The rank statistics of random matrices chosen uniformly from $\operatorname{Mat}(\mathbf{F}_q)$ over a finite field are well understood. The universality properties of these statistics are not yet fully understood however. Recently Wood \cite{W1} and Maples \cite{M1} considered a natural requirement where the random variables are not allowed to be too close to constant and they showed that the rank statistics match with the uniform model up to an error of type $e^{-cn}$. In this paper we explore a condition called near uniform, under which we are able to prove tighter bounds $q^{-cn}$ on the asymptotic convergence of the rank statistics.
	
	Our method is completely elementary, and allows for a small number of the entries to be deterministic, and for the entries to not be identically distributed so long as they are independent. More importantly, the method also extends to near uniform symmetric, alternating matrices.
	
	Our method also applies to two models of perturbations of random matrices sampled uniformly over $\operatorname{GL}_n(\mathbf{F}_q)$: subtracting the identity or taking a minor of a uniformly sampled invertible matrix. 

\end{abstract}

\section{Introduction} 
A central question in random matrix theory is that of universality. If instead of taking a Haar-uniform random matrix, we sample by some other methods, under what conditions will we observe similar statistics in some large scale matrix phenomena? While  there  have  been  many  results  addressing universality of random matrices in characteristic zero to study the spectral behavior of various models of random  matrices,  we  have  not  seen  much  in  the  literature  addressing  universality  behavior  in  the  finite fields  setting.   In  fact,  to  the  best  of  our  knowledge,  although  there  had  been results for matrices over finite fields such  as  \cite{Balakin, BKW, BM, Cooper, CRR, KK,  KL1, KL2, Kop, SW}, universality results only appeared very recently in  \cite{E, LMNg, M1,M2, NgP, NgW1,NgW2, W1,W2}. 

The rank statistics of a uniform random matrix over a finite field $\F_q$ are well studied. An exact formula for the rank of a uniform matrix in $\F_q$ are folklore and can be found in for instance \cite{Belsley}. A formula for the rank of a symmetric matrix was found by MacWilliams \cite{Mac} and for an alternating matrix by MacWilliams and Sloane \cite{MacS}. We will recover these formulas throughout our presentation under a more probabilistic viewpoint.

Consider random matrices with entries being iid copies of $\xi$ satisfying a so called min-entropy condition that $\P(\xi =k) \le 1-\al$ for all $k\in \F_q$. The main result of \cite{M1} by Maples (see also \cite{NgP} for corrections) and a special case of the recent result \cite{W2} by Wood showed that the rank distribution of such matrices is asymptotically equal to the distribution of the Haar distributed random matrices. Maples' approach relies on a very fine machinery (swapping method) originating from \cite{BVW, KKSz, TV}, which achieves an exponential error bound $e^{-c_\al n}$, while Wood's approach relies on counting surjections (moment method) which yields weaker error but the method has wider applications. 

In this paper we introduce a condition on the entries which can be seen as an interpolating between uniform and constant min-entropy.  Let $\xi\in\F_q$ be a random variable satisfying,
\begin{equation}\label{eqn:xi}
c_k= \P(\xi = k) \le C/q,  \forall  k\in \F_q,
\end{equation}
where $C\ge 1$ is a given constant. We refer to $\xi$ satisfying equation~\ref{eqn:xi} as near uniform. In a sense, it resembles distributions with bounded density in the characteristic zero case.

Under this stronger hypothesis we show that the error bound can be improved to $q^{-cn}$, by a completely elementary approach. Our other main contributions are: (1) we do not require the entries to be identically distributed, only that they each satisfy the near uniform condition, and we do not even need all the entries to be near uniform as  we can allow a small linear number of entries in each column to be arbitrary (see Theorem \ref{thm:main:1} and Theorem \ref{thm:rec}); (2) our method can be extended to symmetric and alternating matrices (see Theorem \ref{thm:sym} and Theorem \ref{thm:alt}) where we also allow many entries to be arbitrary, (3) furthermore, our method also yields interesting applications towards the uniform model itself, such as perturbation of matrices sampled uniformly over $\GL_n(\F_q)$ (see Theorem \ref{thm:GL:1} and Theorem \ref{thm:GL:2}), or the evolution of rank of a matrix evolved from an arbitrary matrix (see Theorem \ref{thm:eventualequiv}). All of these results seem to be new. In what follows we detail our main results.

\subsection{Random Matrices of Independent Entries} In this section, if not stated otherwise, we assume that $X_1,\dots, X_{n}$ are independent random vectors in $\F_q^n$ with independent and near uniform entries. Let $M_n$ be the random $n \times n$ matrix with the $X_i$ as column vectors. Denote the corank of the matrix $M_n$ by 
$$Q(M_n):=n - \rank(M_n).$$ 
It is known (see for instance \cite{Belsley}) that for the uniform model $M_{\un,n}$, for any $0\le k\le n$ we have
$$\P(Q(M_{\un, n}) =k) = \frac{1}{q^{k^2}} \frac{\prod_{i=1}^n (1-1/q^i) \prod_{i=k+1}^n (1-1/q^i)}{\prod_{i=1}^{n-k} (1-1/q^i) \prod_{i=1}^k (1-1/q^i)}.$$ 
 Let $Q_\infty$ be the ``limiting" random variable with 
$$\P(Q_\infty=k) = \frac{1}{q^{k^2}} \frac{\prod_{i=k+1}^\infty (1-1/q^i)}{ \prod_{i=1}^{k}(1-1/q^i)}.$$
The following was shown by Fulman and Goldstein \cite{FG} 
\begin{equation}\label{eqn:FG:1}
\frac{1}{8q^{n+1}} \le \|Q(M_{\un,n}) - Q_\infty\|_{TV} \le \frac{3}{q^{n+1}}.
\end{equation}
This is a striking result, but as their method compares $Q(M_{\un,n})$  and $Q_\infty$ directly, it does not seem to extend beyond the uniform model. One of the main results of this note is the following.

\begin{theorem}[rank distribution of independent entry square matrices]\label{thm:sq} There exist constants $c, C'$ depending on $C$ such that
$$\|Q(M_n) - Q_\infty\|_{TV} \le \Big(\frac{C'}{q}\Big)^{cn}.$$
\end{theorem}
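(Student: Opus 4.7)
The plan is to triangulate via the uniform model: by the Fulman--Goldstein estimate \eqref{eqn:FG:1}, which contributes at most $3/q^{n+1}$ to the total variation, it suffices to show $\|Q(M_n) - Q(M_{\un, n})\|_{TV} \le (C'/q)^{cn}$. The starting point is the elementary pointwise estimate that for any random vector $X \in \F_q^n$ with independent near-uniform entries and any subspace $V \subseteq \F_q^n$ of codimension $d$,
\[
\Pr(X \in V) \le (C/q)^d.
\]
Indeed, one puts $V$ into pivoted form so that $V$-membership becomes $d$ equations each determining one coordinate of $X$ as an affine function of the other $n - d$; then $c_k \le C/q$ together with coordinate independence produces the product.

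Next, apply this to the column-revealing process. Let $V_i = \Sp(X_1, \ldots, X_i)$ and $\tau_Q = \{i : X_i \in V_{i-1}\}$, so that $Q(M_n) = |\tau_Q|$. For a fixed $\tau = \{i_1 < \ldots < i_k\} \subseteq \{1, \ldots, n\}$, conditioning on the non-stuck steps progressively building up $V$ forces $\dim V_{i_j - 1} = i_j - j$, and hence
\[
\Pr(\tau_Q = \tau) \le \prod_{j=1}^k (C/q)^{n - i_j + j}.
\]
The exponent $\sum_j (n - i_j + j)$ is minimized at $\tau_\ast := \{n - k + 1, \ldots, n\}$, where it equals $k^2$. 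Summing this bound over all size-$k$ subsets $\tau$, and controlling the resulting partition-style sum by $\prod_{j \ge 1}(1 - (C/q)^j)^{-1}$, yields the tail estimate $\Pr(Q(M_n) \ge K) \le C_0 (C/q)^{K^2}$ for some $C_0 = C_0(C)$; combined with $\Pr(Q_\infty \ge K) = O(q^{-K^2})$, this already controls the total variation contribution from $K \ge \sqrt{cn}$ by $(C'/q)^{cn}$.

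The main obstacle is the bounded-$k$ regime, where $\Pr(Q(M_n) = k)$ and $\Pr(Q_\infty = k)$ are of constant order and must be shown to agree to within $(C'/q)^{cn}$. The plan here is to expand
\[
\Pr(\tau_Q = \tau) = \E\Big[\prod_{i \in \tau} p_i \prod_{i \notin \tau}(1-p_i)\Big], \qquad p_i := \Pr(X_i \in V_{i-1} \mid V_{i-1}),
\]
and compare term-by-term with the uniform-model analog. The tail bound rules out $\tau$'s far from $\tau_\ast$; for $\tau$'s near $\tau_\ast$, the central input is that $V_{i-1}$, being the column span of $\Theta(n)$ independent near-uniform vectors, ought to be $(C'/q)^{cn}$-close in distribution to a Haar-uniform subspace of the appropriate dimension, so that the hit probabilities $p_i$ average to $q^{-\codim V_{i-1}}$ with the same error. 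Making this ``mixing'' of the column-span process precise is the heart of the argument; once it is in place, matching $\Pr(Q(M_n) = k)$ with $\Pr(Q(M_{\un,n}) = k)$ term-by-term and invoking \eqref{eqn:FG:1} delivers the claimed total variation bound.
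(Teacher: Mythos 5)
Your skeleton matches the paper's: a column exposure process, an Odlyzko-type bound $\Pr(X\in V)\le (C/q)^{\codim V}$ to control the early (full-rank) phase and the deep tail of $Q(M_n)$, a step-by-step comparison of the transition probabilities with those of the uniform model in the final phase, and Fulman--Goldstein \eqref{eqn:FG:1} to pass from the finite-$n$ uniform law to $Q_\infty$. The tail computation $\Pr(Q(M_n)\ge K)\lesssim (C/q)^{K^2}$ and the reduction to matching $\Pr(Q(M_n)=k)$ for small $k$ are fine.

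However, there is a genuine gap, and you name it yourself: the claim that the hit probabilities $p_i=\Pr(X_i\in V_{i-1}\mid V_{i-1})$ agree with $q^{-\codim V_{i-1}}$ up to $(C'/q)^{cn}$ is asserted (``making this mixing precise is the heart of the argument'') but never proved, and it \emph{is} the entire technical content of the theorem. Moreover, the form in which you propose to establish it --- that the law of $V_{i-1}$ is $(C'/q)^{cn}$-close to Haar measure on the Grassmannian --- is stronger than what is needed and is not what the paper proves; the paper proves a quenched statement about the realized subspace. Concretely, one introduces the structure functional $\rho_F(\Ba)=\frac1q\sum_{t\neq0}\prod_i f_i(ta_i)$, which controls $|\Pr(X\cdot\Ba=r)-1/q|$; a counting lemma (Lemma \ref{lemma:counting}) bounds the number of vectors with $\rho_F$ large, and since each such vector is orthogonal to all of $X_1,\dots,X_m$ with probability at most $(C/q)^m$ (after first ruling out sparse normal vectors, Lemma \ref{lemma:sparse}), a union bound shows that with probability $1-(C'/q)^{cn}$ \emph{every} normal vector of $W_m$ has $\rho_F\le (C'/q)^{cn}$ (Proposition \ref{prop:W_m}). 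A separate Fourier computation (Lemma \ref{unconcimpliesuniform}) then converts ``all normal vectors of $H$ are unstructured'' into $|\Pr(X\in H)-q^{-\codim H}|\le 2\delta$. Without this chain (or an equivalent substitute), the term-by-term matching in your bounded-$k$ regime cannot be carried out, so the proposal as written does not constitute a proof.
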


Next, motivated by problems from algebraic combinatorics to enumerate invertible matrices with given constraints of zero entries (see for instance \cite{L} and the references therein), we consider perturbations in the following way. For each $1\le i\le n$, let $F_i\subset [n]$ be the set of coordinates of $X_i$ which are allowed to be any random variable (independent of the others). These sets $F_i$ satisfy the following.

\begin{condition}\label{cond:F} Let $\al$ be a sufficiently small constant. 
\begin{itemize}
\item $|F_i| < \al n$. 
\vskip .1in
\item Every index $i\in[n]$ is contained in at most $(1-12\alpha)n$ sets $F_j$ \footnote{There is nothing special about the constant 12, we use it for convenience, and without intention to optimize.}.
\end{itemize}
  \end{condition}
These conditions are necessary as our result does not hold if the matrix has (many) fixed rows or columns. One typical example is $F_i=\{i,\dots, i+\al n\}$ (which corresponds to the case that the entries of distance $\al n$ from the main diagonals are allowed to be deterministic, for instance all of them can be zero).  

\begin{definition}
We say that $X_i$ is a near uniform vector of {\it type} $F_i$ if the entries of $X_i$ are independent and near uniform, except those with indices in $F_i$, which may be arbitrary.
\end{definition}
 
\begin{theorem}\label{thm:main:1} Let $F_1,\dots, F_n$ be index sets satisfying Condition \ref{cond:F}. Assume that $X_1,\dots, X_n$ are independent random vectors, where $X_i$ is of type $F_i$. Then there exist constants $C',c$ depending on $C$ and $\al$ such that
	$$\|Q(M_n) - Q_\infty\|_{TV} \le \Big(\frac{C'}{q}\Big)^{cn}.$$
\end{theorem}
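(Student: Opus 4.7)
The plan is to reveal the columns $X_1,\dots,X_n$ one at a time and adapt the elementary column-exposure argument used for Theorem~\ref{thm:sq}, with the per-column estimate modified to absorb the arbitrary entries supported on each $F_i$. First condition on all the arbitrary entries; then each column decomposes as $X_i = a_i + Y_i$, where $a_i$ is a fixed vector supported on $F_i$ and $Y_i$ has independent near uniform entries on $[n]\setminus F_i$ and zeros on $F_i$. Since the required bound is deterministic, it suffices to prove the total variation estimate uniformly over the choices of $a_i$.

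The central probabilistic input is the following estimate: for any affine subspace $W\subset \F_q^n$ of codimension $k$,
\begin{equation*}
\P(X_i \in W) \le (C/q)^{\max(k-|F_i|,\,0)} \le (C/q)^{\max(k-\alpha n,\,0)}.
\end{equation*}
To prove it, write $W=w_0+V$, pick a basis of the annihilator $V^\perp$, and put it in reduced row-echelon form relative to an ordering of the coordinates of $\F_q^n$ in which the indices of $F_i$ come first. At most $|F_i|$ of the $k$ pivots can lie inside $F_i$, so at least $k-|F_i|$ pivots sit outside; conditioning on the entries of $Y_i$ at every non-pivot coordinate (and assuming compatibility with the inside-$F_i$ pivot constraints, else the probability is already zero), each remaining constraint pins down one independent near uniform coordinate $Y_i[p]$ and is satisfied with probability at most $C/q$ by \eqref{eqn:xi}.

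With this estimate in hand, I expose $X_1,\dots,X_n$ sequentially and track the corank $Q_i = n - \dim\Sp(X_1,\dots,X_i)$. In the high-corank regime $Q_i \ge 2\alpha n$ the probability that $X_{i+1}$ fails to enlarge the span is at most $(C/q)^{Q_i - \alpha n}$, so a union bound drives $Q_i$ down to $O(\alpha n)$ within the first $(1-O(\alpha))n$ exposures except on an event of probability at most $(C'/q)^{cn}$. The remaining $O(\alpha n)$ exposures must both finish reducing the corank to $O(1)$ and recover the limiting distribution $Q_\infty$ up to error $(C'/q)^{cn}$. Here the second part of Condition~\ref{cond:F} is used: because every row index lies outside at least $12\alpha n$ of the $F_j$'s, any small-codimension annihilator of the current span contains plenty of linear forms whose pivots fall outside $F_j$ for a positive proportion of still-unexposed columns $j$. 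This lets the final exposures be coupled with a purely near uniform exposure of the type treated in Theorem~\ref{thm:sq}, and then compared to the uniform model via the Fulman--Goldstein bound \eqref{eqn:FG:1}, yielding the sharp distributional convergence.

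The main obstacle is precisely the low-corank regime: when $Q_i \le \alpha n$ the key estimate degenerates to the trivial bound $1$, and an adversarial choice of $a_i$ supported on $F_i$ could in principle funnel $X_i$ into any prescribed small-codimension subspace. Condition~\ref{cond:F}(ii) is what rules this out, but turning the combinatorial regularity ``every index lies outside $12\alpha n$ of the $F_j$'s'' into a uniform quantitative lower bound on the number of effectively near uniform constraints each remaining exposure supplies against an arbitrary small-codimension candidate $V$ is the most delicate part of the argument, and is where the numerical constant $12$ in the hypothesis is consumed.
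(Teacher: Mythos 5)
Your proposal correctly sets up the column-exposure framework, and your ``central probabilistic input'' is exactly the paper's Odlyzko-type Lemma~\ref{odlyzko}, which (as in Lemma~\ref{lemma:full}) handles the regime where the exposed matrix is still thin. But the theorem is a \emph{two-sided} distributional statement, and your key estimate is only an upper bound on $\P(X_i\in W)$. The heart of the proof is showing that once $k\ge(1-6\al)n$ one has $\bigl|\P(X_{k+1}\in W_k)-q^{-(n-l)}\bigr|\le (C'/q)^{cn}$, matching \eqref{eqn:uni:iid}, and this is precisely the step you flag as ``the most delicate part'' without supplying an argument. Your proposed resolution --- coupling the final exposures with ``a purely near uniform exposure of the type treated in Theorem~\ref{thm:sq}'' --- is circular, since in the paper Theorem~\ref{thm:sq} is the special case $F_i=\emptyset$ of the very theorem you are proving and is not established independently; and even granting it, you do not explain how the coupling would absorb the adversarial vectors $a_i$ supported on $F_{k+1}$, nor how a one-sided pivot count could yield the matching lower bound on $\P(X_{k+1}\in W_k)$.

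The paper's mechanism for this step is Fourier-analytic and has no counterpart in your proposal. One defines the structure measure $\rho_F(\Ba)=\frac1q\sum_{t\ne0}\prod_{i\notin F}f_i(ta_i)$, which controls $|\P(X\cdot\Ba=r)-1/q|$; Lemma~\ref{lemma:counting} counts the vectors with large $\rho_F$, and a union bound over that bad set (Lemma~\ref{lemma:str:iid}, Proposition~\ref{prop:W_m}) shows that with probability $1-(C'/q)^{cn}$ every normal vector of $W_k$ is unstructured. Crucially, this union bound needs the prior non-sparsity statement Lemma~\ref{lemma:sparse}, and \emph{that} is where Condition~\ref{cond:F}(ii) and the constant $12$ are consumed: for any candidate normal vector with support $S$, at least $k-(1-12\al)n\ge 6\al n$ of the exposed columns have $S\not\subset F_i$ and hence contribute an honest factor $C/q$. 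Finally Lemma~\ref{unconcimpliesuniform} converts ``all normal vectors unstructured'' into the two-sided estimate $|\P(X\in W_k)-q^{-(n-l)}|\le2\delta$ on the whole subspace, not just on individual linear forms. Without this chain (or a substitute for it), your argument establishes that the corank becomes small but not that its distribution converges to $Q_\infty$ at rate $(C'/q)^{cn}$, so the proof as proposed is incomplete.
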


Note that Theorem~\ref{thm:sq} is a corollary of Theorem~\ref{thm:main:1} where one can simply take the sets $F_i$ to be empty.

\begin{corollary}
The number of matrices $M_n \in \Mat_n(\F_q)$ of rank $r$, where $r \ge (1-\al)n$, and where the entries in $F_1,\dots, F_n$ are all zero, is asymptotically
$$N_{F_1,\dots, F_n} = q^{n^2 - \sum_i |F_i|} \Big(Q_\infty(n-r) + O((C'/q)^{cn})\Big).$$ 
\end{corollary}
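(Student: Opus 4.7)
The plan is to recognize the counting quantity $N_{F_1,\dots,F_n}$ as (up to a normalization factor) a probability under a specific instance of the random model in Theorem~\ref{thm:main:1}, and then invoke that theorem.

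More concretely, I would take $\xi$ to be the uniform distribution on $\F_q$, which satisfies the near uniform condition with constant $C=1$ since $\P(\xi=k)=1/q$ for every $k\in\F_q$. For each $i$, define $X_i\in\F_q^n$ to have its entries indexed by $F_i$ set to the deterministic value $0$, and its entries outside $F_i$ equal to independent copies of $\xi$. By construction, $X_i$ is a near uniform vector of type $F_i$ in the sense of the definition preceding Theorem~\ref{thm:main:1}, and the vectors $X_1,\dots,X_n$ are independent. Because we are given that $F_1,\dots,F_n$ satisfy Condition~\ref{cond:F}, the hypotheses of Theorem~\ref{thm:main:1} are met.

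The matrix $M_n$ with columns $X_1,\dots,X_n$ is then uniformly distributed on the set of matrices in $\Mat_n(\F_q)$ whose entry in position $(j,i)$ equals $0$ whenever $j\in F_i$. The cardinality of this set is exactly $q^{n^2-\sum_i|F_i|}$, since the free entries take values independently in $\F_q$. Writing $N_{F_1,\dots,F_n}(r)$ for the number of such matrices of rank $r$, we therefore have, for every $k$,
\[
\P(Q(M_n)=k)\;=\;\frac{N_{F_1,\dots,F_n}(n-k)}{q^{n^2-\sum_i|F_i|}}.
\]
Theorem~\ref{thm:main:1} yields $|\P(Q(M_n)=k)-\P(Q_\infty=k)|\le (C'/q)^{cn}$, and multiplying through by $q^{n^2-\sum_i|F_i|}$ and specializing $k=n-r$ gives the asymptotic formula in the corollary.

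No significant obstacle is expected here: the argument is essentially a translation of the total-variation statement of Theorem~\ref{thm:main:1} into a counting statement. The only point requiring a moment's thought is that the ``type $F_i$'' framework is flexible enough to accommodate a purely deterministic choice in the $F_i$ coordinates, which is why setting those entries to $0$ (and using uniform $\xi$ elsewhere) is legitimate and produces precisely the uniform law on matrices with the prescribed zero pattern. The restriction $r\ge(1-\alpha)n$ in the corollary plays no logical role in the proof beyond fixing the regime of interest, since the TV bound holds uniformly in $k$.
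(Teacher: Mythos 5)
Your proposal is correct and is exactly the intended deduction: the paper states this corollary without proof as an immediate consequence of Theorem~\ref{thm:main:1}, and your argument (uniform $\xi$ with $C=1$, deterministic zeros on the $F_i$ coordinates realizing the uniform law on the constrained matrix set of size $q^{n^2-\sum_i|F_i|}$, then transferring the TV bound to counts) is precisely that consequence. Your remark that $r\ge(1-\al)n$ merely fixes the regime where the formula is informative, rather than being needed for the bound, is also accurate.
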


We remark that invertible matrices with vanishing diagonal are studied in \cite{L}. A precise formula for the number of $n\times (m+n)$ matrices of rank $n$ with vanishing diagonal is given in \cite[Proposition 2.2]{L}. Relatedly, we show that the above estimate also holds for more general rectangular matrices (stated here without perturbations for convenience). Let $X_1,\dots, X_{n+m}$ be random independent vectors with near uniform entries in $\F_q^n$, and let $M_{n \times (n+m)}$ be the random $n \times (n+m)$ matrix spanned by the column vectors.  It is known (again from \cite{Belsley}) that for the uniform model $M_{\un, n,m}$, for any $0\le k\le n$ we have
$$\P(Q(M_{\un,n,m}) =k) = \frac{1}{q^{k(m+k)}} \frac{\prod_{i=1}^{n+m} (1-1/q^i) \prod_{i=k+1}^n (1-1/q^i)}{\prod_{i=1}^{n-k} (1-1/q^i) \prod_{i=1}^{m+k} (1-1/q^i)}.$$
Let $Q_{m,\infty}$ be ``limiting" (as $n\to \infty$) random variable with
$$\P(Q_{m, \infty}=k) = \frac{1}{q^{k(m+k)}} \frac{\prod_{i=k+1}^\infty (1-1/q^i)}{ \prod_{i=1}^{m+k}(1-1/q^i)}.$$
Note that \cite{FG} Fulman and Goldstein showed that the two distributions above are very close,
\begin{equation}\label{eqn:FG:rec}
\frac{1}{8q^{n+m+1}} \le \|Q(M_{\un,n,m}) - Q_\infty\|_{TV} \le \frac{3}{q^{n+m+1}}.
\end{equation}
Here we show
\begin{theorem}[rank distribution of independent entry rectangular matrices]\label{thm:rec} For the near uniform model $M_{n \times (n+m)}$ there exist constants $c, C'$ depending on $C$ such that
$$\|Q(M_{n \times (n+m)}) - Q_{m,\infty}\|_{TV} \le \Big(\frac{C'}{q}\Big)^{c(m+n)}.$$
\end{theorem}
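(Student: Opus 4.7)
My plan is to extend the column-growth argument behind Theorem~\ref{thm:main:1} from $n$ to $n+m$ steps, using the rectangular shape to gain an extra factor in the exponent through the last $m$ columns.

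Set $V_j := \mathrm{span}(X_1, \ldots, X_j) \subseteq \F_q^n$ and $k_j := n - \dim V_j$, so that $k_0 = n$ and $k_{n+m} = Q(M_{n \times (n+m)})$. In the Haar-uniform model $(k_j)$ is a Markov chain with transitions $\Pr(k_{j+1} = k - 1 \mid k_j = k) = 1 - q^{-k}$ and $\Pr(k_{j+1} = k \mid k_j = k) = q^{-k}$; the terminal law is Belsley's formula for $Q(M_{\un,n\times(n+m)})$, which by \eqref{eqn:FG:rec} is within $q^{-(n+m+1)}$ of $Q_{m,\infty}$ in total variation. Thus it suffices to couple the near uniform chain $(k_j)$ with the Haar-uniform one at total variation cost $(C'/q)^{c(n+m)}$.

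The core tool is the structural lemma underlying the proof of Theorem~\ref{thm:main:1}: on a ``good'' event of probability at least $1 - (C'/q)^{cn}$ over $V_j$, any independent near uniform column $X$ satisfies $\Pr(X \in V_j \mid V_j) = q^{-k_j}\bigl(1 + O((C'/q)^{cn})\bigr)$. The goodness condition is an anti-structural property of the normal space $V_j^{\perp}$ (of large LCD or spread type); since $V_{j+1} \supseteq V_j$ forces $V_{j+1}^{\perp} \subseteq V_j^{\perp}$, the condition is hereditary, any subspace of an anti-structured space remaining anti-structured. Thus the lemma extends from $j \le n$ (already covered by Theorem~\ref{thm:main:1}) to all $j \in [0, n+m]$, and iterating it along the $n+m$ steps of the chain gives the desired coupling.

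The main obstacle is making the \emph{quantitative} gain from the last $m$ columns precise. The improvement of the exponent from $c n$ to $c(n+m)$ relies on the observation that for $j > n$ each extra near uniform column lies in $V_j$ with probability $q^{-k_j}$; these $m$ factors compound to suppress the relevant error terms by an additional $q^{-km}$ (the same factor responsible for $\Pr(Q_{m,\infty} = k) \propto q^{-k(m+k)}$). Making this rigorous requires tracking how the conditional law of $V_j$ evolves as $j$ crosses $n$, and verifying that the cumulative error over $n+m$ steps collapses to $(C'/q)^{c(n+m)}$ rather than $(n+m)\cdot(C'/q)^{cn}$. Once this is settled, combining with \eqref{eqn:FG:rec} closes the argument.
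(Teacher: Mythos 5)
There is a genuine gap, and it is exactly the one you flag yourself as ``the main obstacle'': your argument, as written, produces a per-step error of $(C'/q)^{cn}$, not $(C'/q)^{c(n+m)}$. Since your columns live in $\F_q^n$, the anti-concentration estimate $\rho_{F}(\Ba)\le (Kq^{-1/2})^{M}$ is capped at $M\le n$ factors, so $|\P(X\in V_j\mid V_j)-q^{-k_j}|$ can never be smaller than roughly $q^{-n/2}$ no matter how many columns you expose. Your proposed fix --- that the $m$ extra columns ``compound'' to suppress the error by $q^{-km}$ --- confuses the main terms with the error terms: the factors $q^{-k_j}$ multiply along a trajectory, but the step errors enter additively through the triangle inequality over the coupling. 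To actually collapse $(n+m)\cdot(C'/q)^{cn}$ down to $(C'/q)^{c(n+m)}$ you would need a damping argument showing that the error committed at step $j$ is attenuated by the total variation distance between the terminal laws started from the two perturbed states, which decays like $q^{-(n+m-j)}$; this is a real additional argument that the proposal does not carry out, and you acknowledge as much (``Once this is settled\dots'').

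The paper avoids the issue entirely with a one-line reduction you did not use: since row rank equals column rank, pass to the $(n+m)\times n$ transpose and run the column exposure process there. Now there are only $n$ steps, but each exposed column is a near uniform vector in $\F_q^{n+m}$, so every normal vector lives in $\F_q^{n+m}$ and every single-step estimate from the proof of Theorem~\ref{thm:main:1} (Odlyzko, the counting lemma, Proposition~\ref{prop:W_m}, Lemma~\ref{unconcimpliesuniform}) automatically carries the exponent $c(n+m)$ rather than $cn$. I would recommend either adopting that transposition, or, if you want to keep the $n+m$-step chain in $\F_q^n$, actually proving the attenuation estimate sketched above rather than asserting it.
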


\subsection{Random symmetric matrices} Now we discuss our result for symmetric matrices. Let the entries $m_{ij}, i\ge j$ of $M_n$ be independent near uniform random variables. It is known (from \cite{C,Mac}) that for the uniform model $M_{\un}$, for any $0\le k\le n$ we have
$$\P(Q(M_{\un, n}) =k) = \frac{1}{q^{\binom{n+1}{2}}} \prod_{i=1}^{\lfloor (n-k)/2 \rfloor} \frac{q^{2i}}{q^{2i-1}} \prod_{i=0}^{n-k-1} (q^{n-i}-1).$$

Let $Q_{sym, \infty}$ be the random variable with 
$$\P(Q_{sym, \infty}=k)= \frac{\prod_{i=0}^\infty (1-q^{-2i-1})}{\prod_{l=1}^k (q^i-1)}.$$

In the uniform model of symmetric matrices Fulman and Goldstein \cite{FG} showed
\begin{align}\label{eqn:FG:sym}
&\frac{0.18}{q^{n+1}} \le \|Q(M_{\un,n}) - Q_\infty\|_{TV} \le \frac{2.25}{q^{n+1}} \mbox{ for $n$ even}\nonumber \\
 & \frac{0.18}{q^{n+2}} \le \|Q(M_{\un,n}) - Q_\infty\|_{TV} \le \frac{2}{q^{n+2}} \mbox{ for $n$ odd}.
\end{align}

We show that 
\begin{theorem} For the near uniform symmetric model $M_n$ 
$$\|Q(M_n) - Q_{sym, \infty}\|_{TV} \le \Big(\frac{C'}{q}\Big)^{cn}.$$
\end{theorem}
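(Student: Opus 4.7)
The plan is to mirror the iterative exposure argument that underlies Theorem~\ref{thm:sq}, adapted to enforce the symmetry constraint $m_{ij}=m_{ji}$. Let $M^{(k)}$ denote the leading $k\times k$ principal submatrix of $M_n$ and set $Q_k := k-\rank(M^{(k)})$. By \eqref{eqn:FG:sym} it suffices to show that the law of $Q_n$ for the near uniform ensemble lies within total variation distance $(C'/q)^{cn}$ of the law of the uniform symmetric corank.

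I expose $M_n$ one row/column pair at a time: at step $k+1$, reveal the off-diagonal entries $m_{k+1,1},\dots,m_{k+1,k}$ together with the diagonal entry $m_{k+1,k+1}$, which by symmetry simultaneously reveals the new row and the new column of $M^{(k+1)}$. A short linear-algebra calculation shows that $Q_{k+1}-Q_k\in\{-1,0,1\}$, with transition probabilities determined by (i) whether $X_{k+1}:=(m_{k+1,1},\dots,m_{k+1,k})^T$ lies in $\cs(M^{(k)})$, and (ii) whether $m_{k+1,k+1}$ is compatible with a specific affine condition read off from the null space of $M^{(k)}$. Both events are captured by linear tests on the newly revealed entries, and the task is to show that the conditional probabilities of these tests match the uniform symmetric model up to an exponentially small error.

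The heart of the argument is a structural dichotomy: with probability $1-(C'/q)^{cn}$, every nonzero null vector of $M^{(k)}$ has support of size at least $\al k$. I will establish this by a union bound over small-support vectors; for each fixed $v\in\F_q^k$ with $|\supp(v)|\le\al k$, the event $M^{(k)}v=0$ imposes $k$ linear constraints, each involving independent near uniform entries, and so (by revealing $M^{(k)}$ row by row) has probability at most $(C/q)^{(1-O(\al))k}$. Since the number of such candidate $v$ is at most $\binom{k}{\al k}q^{\al k}$, this gives an exponentially small bound provided $\al$ is taken small enough relative to $\log C$. On the generic event that every null vector of $M^{(k)}$ has large support, the coefficient vectors of the linear tests on $X_{k+1}$ also have large support, so near uniformity yields each test probability within $(1\pm(C'/q)^{cn})/q$ of its uniform symmetric counterpart.

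The principal technical obstacle is the coupled treatment of the diagonal entry $m_{k+1,k+1}$ and the off-diagonal row $X_{k+1}$ when computing transition probabilities, which is the feature that genuinely distinguishes the symmetric case from the non-symmetric case of Theorem~\ref{thm:sq}. After conditioning on $X_{k+1}$, the distribution of $m_{k+1,k+1}$ controls whether the rank jumps by $1$ or $2$, and one must check that the combined probability matches the transition of the uniform symmetric Markov chain on coranks; in particular, for the delicate rank-$+2$ transition one uses that the relevant linear form on $m_{k+1,k+1}$ is always non-degenerate so that the near uniform bound applies. Once this case analysis is complete, summing the per-step errors over $k=1,\dots,n$, absorbing the $n$ factor into the constants $c$ and $C'$, and combining with \eqref{eqn:FG:sym} yields the claimed total variation bound.
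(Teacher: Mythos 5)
Your overall strategy --- a corner (row/column pair) exposure process, a union bound showing null vectors are non-sparse, and a per-step comparison of transition probabilities with the uniform symmetric chain --- is the paper's strategy. But there are two genuine gaps, both stemming from the same misconception: the near uniform hypothesis $\P(\xi=k)\le C/q$ gives only a \emph{one-sided} bound on point probabilities, never a two-sided estimate $1/q\pm(C'/q)^{cn}$. First, your claim that once every null vector of $M^{(k)}$ has support $\ge \al k$, ``near uniformity yields each test probability within $(1\pm(C'/q)^{cn})/q$'' is false. A near uniform vector $X$ and a full-support $\Ba$ can have $|\P(X\cdot\Ba=0)-1/q|$ as large as $e^{-cn}$ with $c$ independent of $q$ (take each $c^i_k=2/q$ supported on a set where the character $e_p(k t_0)$ has positive real part, and $\Ba=\mathbf{1}$); this is far bigger than $(C'/q)^{cn}$ when $q$ is large. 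Non-sparsity must be upgraded to \emph{unstructuredness}, i.e., smallness of $\rho_F(\Ba)=\frac1q\sum_{t\ne0}\prod_i f_i(ta_i)$, which is exactly what the paper's counting lemma (Lemma~\ref{lemma:counting}), Proposition~\ref{prop:W_m:sym}, and Lemma~\ref{unconcimpliesuniform} supply. Second, and for the same reason, your treatment of the corank-preserving transition fails: conditioned on the off-diagonal entries $X_{k+1}$, the event that the rank does not increase reduces to $m_{k+1,k+1}$ hitting one prescribed value of $\F_q$, and near uniformity of the diagonal entry only gives $\P\le C/q$ (the probability could equally be $0$), not $1/q^{k+1}\pm(C'/q)^{cn}$. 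The paper instead extracts the needed equidistribution from the \emph{quadratic} form $\sum_{ij}b_{ij}x_ix_j$ in the off-diagonal entries via the decoupling lemma (Lemma~\ref{lemma:decoupling1} and Lemma~\ref{lemma:quad}); some argument of this kind is unavoidable, since the diagonal alone cannot do the job.

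There is also a gap in your endgame. The per-step error estimates are only available for $k\ge m_0=\Theta(n)$ (for small $k$ the union bounds over the bad sets are not exponentially small in $n$), so you cannot ``sum the per-step errors over $k=1,\dots,n$'' starting from the empty matrix. Starting instead at $k=m_0$, the corank of $M^{(m_0)}$ is some unknown random variable, and matching transition probabilities thereafter does not by itself show that the chain converges to $Q_{sym,\infty}$: you must also show the chain forgets this initial condition within the error tolerance. The paper handles this with Lemma~\ref{lemma:hitzero} and Proposition~\ref{prop:eventualequiv}, proving that the uniform corank chain hits $0$ within $\eps n$ steps with probability $1-3^{\eps n}q^{-(\eps n-m_0)}$, after which it coincides with the chain started from corank zero. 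Your proposal needs an analogous coupling or mixing step.
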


Next, as in the iid case, we can extend the result to perturbations. More precisely we will sample $M_n$, a random symmetric matrix of size $n$ in the following way. All the entries on and above its diagonal are near uniform and independent except for those falling in index sets $F_i\subseteq [n]$ which satisfy the following condition:
\begin{condition}\label{condition:F2}Let $\al$ be a sufficiently small constant. 
	\begin{itemize}
		\item $|F_i|< \al n$.
		\vskip .1in
		\item For symmetry, we also assume that for every $i,j\in[n]$, $F_j$ contains $i$ if and only if $F_i$ contains $j$. In particular each index is contained in at most $\al n$ sets.
	\end{itemize}
\end{condition}

\begin{theorem}[rank distribution for random symmetric matrices]\label{thm:sym} With $M_n$ as above, there exist constants $C',c$ such that
$$\|Q(M_n) - Q_{sym, \infty}\|_{TV} \le \Big(\frac{C'}{q}\Big)^{cn}.$$
\end{theorem}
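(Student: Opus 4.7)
The approach is to mimic the argument used for Theorem \ref{thm:main:1}, adapted to respect the symmetry of $M_n$. Splitting the total variation distance at a large constant $K$, one has
\[
\|Q(M_n) - Q_{sym,\infty}\|_{TV} \le \sum_{k \le K} \big|\P(Q(M_n) = k) - \P(Q_{sym,\infty} = k)\big| + \P(Q(M_n) > K) + \P(Q_{sym,\infty} > K),
\]
so it suffices to establish (i) a tail bound $\P(Q(M_n) > K) \le (C'/q)^{cn}$ for a sufficiently large constant $K$, and (ii) a pointwise match $\P(Q(M_n) = k) = \P(Q_{sym,\infty} = k) + O((C'/q)^{cn})$ for each $k \le K$. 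Both reduce, via the subspace enumeration $\P(Q(M_n) \ge k) = \sum_{V} \P(V \subseteq \ker M_n)$ (with a combinatorial weight for overcounting), to estimates on $\P(V \subseteq \ker M_n)$ for $k$-dimensional subspaces $V \subseteq \F_q^n$, which in turn reduce to the single-vector probabilities $\P(M_n v = 0)$.

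The workhorse is a small-ball bound: for any nonzero \emph{non-sparse} $v$, meaning $|\supp(v)| \ge \beta n$ for a suitable $\beta > \alpha$, one would show $\P(M_n v = 0) \le (C/q)^n$. Sparse vectors are handled by a crude union bound: there are at most $O(\binom{n}{s} q^s)$ vectors of support size $s$, and for each one the near-uniform condition $\P(\xi = k) \le C/q$ bounds $\P((M_n v)_i = 0) \le C/q$ on every row $i$ whose near-uniform coefficients intersect $\supp(v)$. Condition \ref{condition:F2} ensures (using the symmetry $j \in F_i \iff i \in F_j$ together with $|F_i| \le \alpha n$) that $\supp(v) \not\subseteq F_i$ for all but at most $\alpha n$ indices $i$, producing at least $(1-\alpha)n$ usable constraints even for small supports. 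Combining this with the union bound yields (i) directly: with probability $1 - (C'/q)^{cn}$, every nonzero $v \in \ker M_n$ is non-sparse.

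The main obstacle, relative to the non-symmetric case, is that the entries $m_{ij}=m_{ji}$ are shared, so the $n$ constraints $(M_n v)_i = 0$ are not independent. The plan is to view $\{M_n v = 0\}$ as a linear system in the $\binom{n+1}{2}$ upper triangular entries (including the diagonal), and to extract $n$ pivot entries $\{m_{i_r j_r}\}_{r=1}^n$ (with $i_r \le j_r$) such that: (a) the corresponding $n \times n$ coefficient submatrix is invertible; (b) each pivot is a near uniform entry, i.e.\ $j_r \notin F_{i_r}$; and (c) no two pivots coincide as unordered pairs. Such pivots exist by a Hall--type matching argument on the graph with vertex set $[n]$, edge set $\{\{i,j\}: v_j \neq 0,\, j \notin F_i\}$, and self-loops $\{i,i\}$ whenever $v_i \neq 0$ and $i \notin F_i$; non-sparsity of $v$ together with the symmetric version of Condition \ref{condition:F2} ensures that every subset of $[n]$ has a sufficiently large neighborhood in this graph, provided $\beta$ exceeds $\alpha$ by a definite margin. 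Conditioning on all non-pivot entries then reduces $\{M_n v = 0\}$ to fixing the $n$ independent near uniform pivots at prescribed values, an event of probability at most $(C/q)^n$. Feeding this small-ball bound into the subspace enumeration for each $k \le K$, and comparing term-by-term with the analogous enumeration for the uniform symmetric model (whose limit produces the MacWilliams formula for $Q_{sym,\infty}$), gives the pointwise match (ii) and completes the proof.
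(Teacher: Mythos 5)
Your proposal replaces the paper's corner--exposure argument with a moment/subspace--enumeration method (essentially Wood's approach), and as outlined it has two genuine gaps. First, the splitting step (i) is false as stated: for a constant $K$ the tail $\P(Q(M_n)>K)$ does \emph{not} decay in $n$. Already for the uniform symmetric model $\P(Q_{sym,\infty}=k)\asymp q^{-k(k+1)/2}$, so $\P(Q>K)$ is a positive constant depending only on $K$ and $q$; to make the tail of size $(C'/q)^{cn}$ you would need $K\gtrsim\sqrt{n}$, at which point the enumeration in (ii) must range over $q^{kn}$ subspaces with $k$ up to $\sqrt{n}$ and the bookkeeping changes completely. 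Second, and more fundamentally, the ``workhorse'' small-ball bound $\P(M_nv=0)\le (C/q)^n$ cannot by itself produce the term-by-term match in (ii): summing it over the $q^n$ vectors $v$ gives $C^n$, which is exponentially large, whereas the comparison with the uniform model requires the two-sided estimate $\P(M_nv=0)=q^{-n}+O((C'/q)^{(1+c)n})$ for all but a controllably small exceptional set of $v$. Establishing that equidistribution is exactly the anti-concentration analysis (the quantity $\rho_F$, the counting of structured vectors, and for the symmetric model the decoupling of the quadratic form in $x_{m+1}$) that your outline omits; a one-sided pivot bound is not a substitute. Even granting accurate moments, converting moment closeness into total variation distance at precision $(C'/q)^{cn}$ is the known weak point of the moment method, which is precisely why the paper avoids it.

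For contrast, the paper proceeds by a corner exposure process: it shows (Lemma \ref{lemma:O:sym}, Proposition \ref{prop:W_m:sym}) that generalized normal vectors of $M_m$ are non-sparse and unstructured, deduces via Lemma \ref{unconcimpliesuniform} and the decoupling Lemma \ref{lemma:decoupling1} that the one-step corank transition probabilities match the uniform symmetric chain up to $(C'/q)^{cm}$ (Proposition \ref{prop:rankevolution:sym}), and then invokes Proposition \ref{prop:eventualequiv} --- the corank of the uniform chain started from an arbitrary corner hits zero within $n/2$ steps with probability $1-3^{n/2}q^{-(n/2-m_0)}$ --- to conclude. A minor further issue with your pivot scheme: a Hall-type system of distinct representatives only guarantees a nonzero ``diagonal'' for the $n\times n$ coefficient submatrix, not its invertibility; the standard fix (also used in the paper's proof of Proposition \ref{prop:W_m:sym}) is to pick a single index $j_0\in\supp(v)$ and condition on everything outside row and column $j_0$, which linearizes the $n-1$ remaining constraints in independent entries but only yields the exponent $(1-\alpha)n-1$, again insufficient for a raw union bound over all of $\F_q^n$.
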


\begin{corollary}
The number of matrices $M_n \in \Mat_n(\F_q)$ of rank $r$, where $r \ge (1-\al)n$, and where the entries in $F_1,\dots, F_n$ are all zero, is asymptotically
$$N_{F_1,\dots, F_n} = q^{\binom{n+1}{2} - \sum_i |F_i|} \Big(Q_{sym, \infty}(n-r) + O((C'/q)^{cn})\Big).$$
In particular, when $F_1=\{1\},\dots, F_n=\{n\}$ then the number of full rank symmetric matrices with zero entries on the diagonal is $q^{\binom{n}{2}} \big(Q_{sym, \infty}(0) + O(1/q^{cn})\big)$, which is asymptotically the number of full rank symmetric matrices of size $n-1$. (In fact these two quantities are the same, see \cite[Theorem 3.3]{L}.) Similarly one can also deduce an asymptotic formula for the number of invertible matrices of the first $k$ entries zero (and so on), see also \cite[Theorem 4.25]{L}.
\end{corollary}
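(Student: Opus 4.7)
The plan is to mirror the argument for Theorem~\ref{thm:main:1}, with the main new ingredient being a row-by-row exposure that respects the symmetry of $M_n$. As in the asymmetric case, we aim to estimate the $q$-binomial moments
$$\E\!\left[\binom{Q(M_n)}{k}_q\right] \;=\; \sum_{\substack{W \subseteq \F_q^n \\ \dim W = k}} \P(W \subseteq \ker M_n),$$
since $Q(M_n)$ is determined by these moments and the Cohen--Lenstra type factorization for $Q_{sym,\infty}$ is explicit; matching each moment to error $(C'/q)^{cn}$ translates directly into the claimed TV bound.

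The core small-ball estimate is obtained as follows. Fix a nonzero $v\in\F_q^n$ and reveal the upper-triangular entries $(m_{ii},m_{i,i+1},\ldots,m_{in})$ in the order $i=1,2,\ldots,n$. The $i$-th row equation of $M_n v=0$ reads
$$\sum_{j\ge i} m_{ij} v_j \;=\; -\sum_{j<i} m_{ji} v_j,$$
whose right-hand side is already determined at step $i$. Call index $i$ \emph{good for $v$} if there exists some $j \ge i$ with $j\notin F_i$ and $v_j\ne 0$; at every good step the near-uniform hypothesis \eqref{eqn:xi} yields a conditional probability at most $C/q$ that the equation holds. Condition~\ref{condition:F2} --- smallness $|F_i|<\alpha n$ together with the symmetric incidence relation $i\in F_j\iff j\in F_i$ --- then ensures that any $v$ whose support has size at least $\delta n$ admits at least $(1-O(\alpha))n$ good indices, yielding $\P(M_n v=0) \le (C'/q)^{(1-O(\alpha))n}$.

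Extending this row-exposure argument to a $k$-dimensional subspace $W$ requires keeping track of the redundancies forced by symmetry: the conditions $M_n w=0$ for $w$ in a basis of $W$ form $nk$ linear equations in the $\binom{n+1}{2}$ upper-triangular entries, but $\binom{k}{2}$ of them are automatically implied by the others (corresponding to the relations $w^\top M_n w' = (w')^\top M_n w$). Accounting for this gives
$$\P(W\subseteq \ker M_n) \;\le\; (C'/q)^{(k(2n-k+1)/2)(1-O(\alpha))}$$
for $W$ outside a small exceptional family of compressible or $F$-aligned subspaces. Exceptional subspaces are then handled by a compressible/incompressible dichotomy in the style of Maples \cite{M1}: there are at most $q^{O(\delta n\log(1/\delta))}$ vectors whose support has size at most $\delta n$, and each is annihilated with probability at most $(C/q)^{(1-O(\delta))n}$ via a single good-row bound, so their total contribution is absorbed into the error. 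Summing over $W$ of dimension $k$ and comparing with the exact formula for the uniform symmetric model stated preceding \eqref{eqn:FG:sym} recovers $\P(Q_{sym,\infty}=k)$ up to the required error.

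The chief obstacle will be the structured subspaces $W$ whose basis vectors are simultaneously ``hidden'' by the perturbation sets $F_i$: asymmetric perturbations could in principle conspire to vanish on a common low-dimensional direction, but the symmetric incidence relation $i\in F_j\iff j\in F_i$ built into Condition~\ref{condition:F2} forces any such direction to expose at least one near-uniform slot. The bookkeeping needed to cleanly separate the $\binom{k}{2}$ symmetry-forced redundancies from the genuinely independent equations, uniformly across all $W$, is the other delicate point.
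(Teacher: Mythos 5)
You have missed the point of this corollary: it is an immediate consequence of Theorem~\ref{thm:sym}, not something that needs an independent proof. Take the random symmetric matrix whose upper-triangular entries indexed by the $F_i$ are deterministically $0$ and whose remaining $\binom{n+1}{2}-\sum_i|F_i|$ free entries are independent and \emph{uniform} on $\F_q$ (hence near uniform with $C=1$). This model satisfies Condition~\ref{condition:F2}, so Theorem~\ref{thm:sym} gives $\P(\rank(M_n)=r)=Q_{sym,\infty}(n-r)+O((C'/q)^{cn})$. Since all $q^{\binom{n+1}{2}-\sum_i|F_i|}$ matrices with the prescribed zero pattern are equally likely under this model, the count $N_{F_1,\dots,F_n}$ is this probability times $q^{\binom{n+1}{2}-\sum_i|F_i|}$, which is the claim. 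That is the entire argument.

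Instead, you propose to re-prove Theorem~\ref{thm:sym} from scratch by a $q$-binomial moment method, and that sketch has genuine gaps. First, the transfer from moments to the total variation bound is not ``direct'': the moments $\E[\binom{Q(M_n)}{k}_q]$ of the symmetric limiting distribution grow rapidly in $k$, and to recover a TV error of size $(C'/q)^{cn}$ one must control all moments up to $k$ of order $n$ with errors that do not compound; this is precisely why the paper notes that the moment/surjection-counting approach of Wood yields weaker error bounds than the exponential rate claimed here. Second, your small-ball analysis only produces \emph{upper} bounds on $\P(W\subseteq\ker M_n)$; to match the $k$-th moment of $Q_{sym,\infty}$ up to $(C'/q)^{cn}$ you need two-sided asymptotics $\P(W\subseteq\ker M_n)=q^{-k(2n-k+1)/2}(1+O((C'/q)^{cn}))$ for all but a negligible family of $W$, together with a careful count of the exceptional family; the phrase ``comparing with the exact formula for the uniform symmetric model'' does not supply this. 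The paper's actual route to Theorem~\ref{thm:sym} is a corner-exposure process with the decoupling lemma (Lemma~\ref{lemma:quad}) handling the quadratic-form step, and the corollary then requires nothing beyond the specialization described above.
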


\subsection{Random alternating matrices (skew symmetric matrices)} Here we assume $q$ is odd. For alternating matrices $M_n^T = -M_n$.  It is well-known that the rank is even and is implied for example by Proposition~\ref{alt:even}. It is known (from \cite{C1}) that for the uniform model $M_{\un,n}$, for any $0\le k\le n$ of the same parity with $n$ we have
$$\P(Q(M_{\un,n}) =k) = \frac{1}{q^{\binom{n}{2}}} \prod_{i=1}^{(n-k)/2} \frac{q^{2i-2}}{q^{2i}-1} \prod_{i=0}^{n-k-1} (q^{n-i}-1).$$

Let $Q_{alt,e}$ and $Q_{alt,o}$ be the limiting random variables given by 
\[
	\P(Q_{alt,e} = k):= \begin{cases}
\prod_{i=0}^\infty (1 -q^{-2i-1}) \frac{q^k}{\prod_{i=1}^k (q^i-1)}, & \mbox{$k$ is even}\\
 0, \mbox{ $k$ is odd}
\end{cases}
\]
and
\[
	\P(Q_{alt,o}=k):= \begin{cases}
\prod_{i=0}^\infty (1 -q^{-2i-1}) \frac{q^k}{\prod_{i=1}^k (q^i-1)}, & \mbox{$k$ is odd}\\
 0, \mbox{ $k$ is even}
\end{cases}
\]

Let $M_n$ be a random matrix with independent near uniform entries above the diagonal except for entries in index sets $F_i$ satisfying \ref{condition:F2}. The entries on the diagonal are zero and the entries below the diagonal are the negative of the entries above the diagonal.

\begin{theorem}[Rank distribution for random alternating matrices]\label{thm:alt} There exist constants $c, C'$ depending on $C$ such that
	$$\|(Q(M_n) - Q_{alt,e}\|_{TV} \le  \Big(\frac{C'}{q}\Big)^{cn} \mbox{ if $n$ is even}$$
and
	$$\|(Q(M_n)- Q_{alt,o}\|_{TV} \le  \Big(\frac{C'}{q}\Big)^{cn} \mbox{ if $n$ is odd}.$$

\end{theorem}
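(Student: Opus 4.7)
The plan is to follow the same general architecture as the proof of Theorem~\ref{thm:sym} for symmetric matrices, adapting it to the alternating constraint. First, I would record the parity reduction: since $q$ is odd and $M_n^T=-M_n$, Proposition~\ref{alt:even} forces $\rk(M_n)$ to be even, so $Q(M_n)\equiv n \pmod{2}$ almost surely. This explains why $Q_{alt,e}$ and $Q_{alt,o}$ are supported on the correct parity class, and it cuts down the values of $k$ one needs to consider. It suffices to match $\Pr(Q(M_n)=k)$ with $\Pr(Q_{alt,\bullet}=k)$ for every valid $k\le \a'n$ (with $\a'$ small), since a tail bound $\Pr(Q(M_n)>\a'n)\le (C'/q)^{cn}$ will control the remainder; the tail bound is obtained by the same compressible/incompressible dichotomy that underlies the earlier theorems.

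Next I would carry out the anti-concentration step for $\Pr(M_n v = 0)$ for a fixed nonzero vector $v\in \F_q^n$. This is the stage where the alternating structure differs most from the symmetric one: each independent upper-triangular entry $m_{ij}$ ($i<j$) appears in two coordinates of $M_n v$, namely as $m_{ij}v_j$ in coordinate $i$ and as $-m_{ij}v_i$ in coordinate $j$, and the diagonal entries are deterministically zero rather than being independent near uniform variables. I would split $v$ into the compressible and incompressible regimes exactly as in the symmetric case. For compressible $v$ (or, more generally, vectors lying close to a low-dimensional subspace) the bound on the number of such $v$, combined with the near-uniform hypothesis applied to a few free entries in each of sufficiently many rows, gives a crude union bound of $(C'/q)^{cn}$. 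For incompressible $v$, I would reveal the entries $m_{ij}$ one pair of coordinates at a time along a greedy ordering: for each row $i$ we can find an index $j$ with $v_j\ne 0$ and $\{i,j\}\not\subset F_i\cup F_j$, and after conditioning on all but one such $m_{ij}$ the equation $(M_n v)_i=0$ becomes a single affine constraint on a near uniform variable, yielding a factor of $C/q$; the key bookkeeping is to order the rows so that each revelation kills two coordinates of $M_n v$ (coordinates $i$ and $j$) simultaneously, yet we still get $\Omega(n)$ useful near-uniform constraints because Condition~\ref{condition:F2} limits the total number of exceptional entries.

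From these two estimates I would compute $\E|\{v\in \F_q^n: M_n v = 0\}|$ and its higher moments up to the $k$-th, in direct parallel to the symmetric case, and thereby recover $\Pr(Q(M_n)\ge k)$ and then $\Pr(Q(M_n)=k)$ up to error $(C'/q)^{cn}$. The asymptotic factorial-type formula for the limits $Q_{alt,e}$ and $Q_{alt,o}$ is then obtained by comparison with the exact formula for the uniform alternating model from \cite{C1}: for the uniform model the relevant moments of $|\ker M_n|$ can be read off in closed form, so matching moments between the near uniform and uniform models suffices to conclude $\|Q(M_n)-Q_{alt,\bullet}\|_{TV}\le (C'/q)^{cn}$.

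The main obstacle is the anti-concentration analysis for incompressible $v$ under the alternating constraint. The coupling $m_{ij}=-m_{ji}$ prevents one from treating the $n$ equations $(M_n v)_i=0$ as independent, and the deterministic zero diagonal eliminates precisely the entry one would normally use to resolve coordinate $i$. Overcoming this requires a careful greedy ordering that pairs up each equation $(M_n v)_i=0$ with a partner coordinate $(M_n v)_{j}=0$, together with a pigeonhole argument based on Condition~\ref{condition:F2} ensuring that at every stage there is still a near uniform $m_{ij}$ whose conditional distribution is near uniform and which contributes nontrivially to an as-yet-unused equation. Once this scheme is set up the remaining calculation is essentially a book-keeping exercise analogous to the symmetric proof.
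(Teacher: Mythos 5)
Your proposal takes a genuinely different route from the paper -- a moment method (computing $\E|\{v: M_nv=0\}|$ and its higher moments and inverting) rather than the paper's corner-exposure process -- and the route as described has a real gap at its final step. You assert that ``matching moments between the near uniform and uniform models suffices to conclude $\|Q(M_n)-Q_{alt,\bullet}\|_{TV}\le (C'/q)^{cn}$,'' but this inversion is exactly where the moment method loses precision, which is why the paper explicitly contrasts its approach with Wood's surjection-counting method (``which yields weaker error''). Concretely, the $j$-th moment $\E[q^{jQ}]=\sum_k q^{jk}\P(Q=k)$ is of size roughly $q^{cj^2}$, so to recover $\P(Q=k)$ to within $(C'/q)^{cn}$ you need each moment up to order $\sim cn$ with exponentially small \emph{relative} error; the moment sum has $q^{jn}$ terms, and the crude union bound over compressible/structured tuples that you describe produces an \emph{absolute} error of type $(C'/q)^{cn}$ per moment, which is far too weak against moments that are themselves exponentially large. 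Without a quantitative inversion lemma (and you give none), the claimed total variation bound does not follow.

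You are also solving a harder anti-concentration problem than necessary, and in doing so you miss the structural fact that makes the alternating case the \emph{easiest} of the three models in the paper. The paper never estimates $\P(M_nv=0)$ for the full coupled matrix; it exposes one row and column at a time, so at each step the new column $X_{m+1}$ (above the diagonal) is independent of the exposed block $M_m$, and the only question is whether $X_{m+1}$ lies in the column span $H_m$. Proposition~\ref{alt:even} then shows that for an alternating matrix the corank changes by exactly $\pm 1$ at each such step and that the new row's constraint is satisfied automatically whenever $X_{m+1}\in H_m$ (since $\Ba^TA\Ba=0$ for alternating $A$); in particular the zero diagonal and the coupling $m_{ij}=-m_{ji}$, which you identify as the ``main obstacle,'' never enter. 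What remains is the linear analysis already done for the independent-entry case (non-sparsity and non-structure of generalized normal vectors, Proposition~\ref{prop:W_m:sym}, and Lemma~\ref{unconcimpliesuniform}), followed by a comparison of the resulting corank Markov chain with the uniform one; the paper closes this comparison via Lemma~\ref{lemma:hitzero} (the corank of the uniformly-evolved chain hits zero within $\eps n$ steps with high probability), a step that has no analogue in your outline. Your plan, by contrast, imports the quadratic-form difficulties of the symmetric case into a model where they do not arise, and replaces the one step of the paper's argument that actually delivers the exponential rate with an unproved assertion.
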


Note that for the uniform model of random alternating matrices Fulman and Goldstein \cite{FG} showed
\begin{align}\label{eqn:FG:alt}
&\frac{0.18}{q^{n+1}} \le \|Q(M_{\un,n}) - Q_\infty\|_{TV} \le \frac{1.5}{q^{n+1}} \mbox{ for $n$ even.}\nonumber \\
&\frac{0.37}{q^{n+1}} \le \|Q(M_{\un,n}) - Q_\infty\|_{TV} \le \frac{2.2}{q^{n+1}} \mbox{ for $n$ odd.}
\end{align}

\subsection{Perturbations of matrices sampled uniformly from $\GL_n(\F_q)$} In our last subsection we discuss a few variants of the uniform model, for which the rank statistics do not seem to be trivial. First we single out an interesting special case of Theorem \ref{thm:sym}
 and Theorem \ref{thm:alt}, for which we cannot find in the literature.
 
 \begin{theorem}\label{thm:eventualequiv}  Let  $M_{m_0}$ be an arbitrary matrix of size $m_0$ which is  symmetric (or alternating). Let $M_n(M_{m_0})$ be the $n\times n$ random matrix with $M_{m_0}$ as its upper left hand corner and the remaining entries above the diagonal sampled uniformly and those below chosen to make the matrix symmetric (or alternating). Then,
	\begin{align*}
		\|Q(M_n(M_{m_0})) - Q_\bullet\|_{TV} \le  \frac{3^{n/2}}{q^{n/2-m_0}} .
	\end{align*}
	Where $Q_\bullet$ is either $Q_{sym,\infty}$,$Q_{alt,e}$ or $Q_{alt,o}$ depending on whether $M_n$ is symmetric or alternating and the parity of $n$. 
\end{theorem}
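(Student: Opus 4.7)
The strategy is a two-stage Schur-complement reduction that converts $M_n(M_{m_0})$ into a fully uniform random symmetric (resp.\ alternating) matrix of size $n - m_0 - c$, where $c = m_0 - \rank(M_{m_0})$ is the corank of $M_{m_0}$; the Fulman--Goldstein estimates \eqref{eqn:FG:sym}, \eqref{eqn:FG:alt} then handle the remaining comparison.

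First, I would normalize $M_{m_0}$. By the classical fact that every symmetric (resp.\ alternating) matrix of rank $r$ has a non-degenerate principal submatrix of size $r$, one may permute the first $m_0$ coordinates so that the leading $(m_0-c) \times (m_0-c)$ principal block $N_1$ of $M_{m_0}$ is invertible, and then apply a deterministic congruence $U_0$ that brings $M_{m_0}$ to $\diag(N_1, 0_c)$. Extending $U_0$ by the identity and applying $U = \diag(U_0, I_{n-m_0})$ to $M_n(M_{m_0})$ preserves uniformity and independence of the random blocks, because $U$ depends only on the fixed $M_{m_0}$ and uniform distributions are invariant under affine translations. A second Schur complement with respect to $N_1$ eliminates the remaining block $A_I$, showing that $M_n(M_{m_0})$ is congruent (in law) to $\diag(N_1, S')$, where
\[
S' = \begin{pmatrix} 0_c & A'_{II} \\ \pm (A'_{II})^T & B'' \end{pmatrix},
\]
$A'_{II}$ is uniform over $c \times (n-m_0)$ matrices, $B''$ is uniform over symmetric (resp.\ alternating) matrices of size $n-m_0$, and $A'_{II}$ is independent of $B''$. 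In particular $Q(M_n(M_{m_0})) = Q(S')$.

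Next, I would solve $S' v = 0$ directly. Writing $v = (v_1, v_2) \in \F_q^c \oplus \F_q^{n-m_0}$, the system forces $A'_{II} v_2 = 0$ plus an equation fixing $v_1$ in terms of $v_2 \in \ker A'_{II}$. On the good event $G = \{A'_{II}\text{ has full row rank}\}$, the kernel is parameterized by $v_2 \in \ker C$, where $C = V^T B'' V$ with $V$ an explicit basis matrix of $\ker A'_{II}$ depending only on $A'_{II}$. The critical observation is that the restriction map $B'' \mapsto V^T B'' V$ from symmetric (resp.\ alternating) bilinear forms on $\F_q^{n-m_0}$ onto such forms on any fixed $(n-m_0-c)$-dimensional subspace is linear and surjective (any form extends by zero off the subspace). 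Since $B''$ is uniform and independent of $A'_{II}$, conditionally on $A'_{II}$ the matrix $C$ is uniform on symmetric (resp.\ alternating) matrices of size $n-m_0-c$; hence on $G$, $Q(M_n(M_{m_0})) = Q(C)$ has the corank distribution of the fully uniform model of that size.

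Finally, a union bound gives $\Pr(G^c) \le (q^c - 1) q^{-(n-m_0)} \le q^{-(n-m_0-c)}$, since for each nonzero $v \in \F_q^c$ the vector $v^T A'_{II}$ is uniform on $\F_q^{n-m_0}$. Combined with the Fulman--Goldstein bound applied to the $(n-m_0-c)$-size uniform model, and using $c \le m_0$, this yields
\[
\|Q(M_n(M_{m_0})) - Q_\bullet\|_{TV} \le O(q^{-(n-m_0-c)}) \le O(q^{-(n-2m_0)}),
\]
which is sharper than the stated $3^{n/2}/q^{n/2-m_0}$ whenever that bound is nontrivial. The main obstacle is the surjective-restriction step: justifying that after the random Schur-complement, the restriction of $B''$ to the random subspace $\ker A'_{II}$ remains uniform conditionally on $A'_{II}$. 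This hinges on the full uniformity of $B''$ and would break down in the merely near-uniform setting, explaining why Theorems~\ref{thm:sym} and \ref{thm:alt} require the more intricate arguments of the main part of the paper.
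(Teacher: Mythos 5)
Your argument is correct, but it is a genuinely different proof from the one in the paper. The paper treats the corner exposure process as a Markov chain on coranks and proves (Lemma \ref{lemma:hitzero}) that, starting from the arbitrary seed $M_{m_0}$, the corank hits $0$ within $n/2$ steps except with probability $3^{n/2}/q^{n/2-m_0}$ --- by identifying and bounding the most probable corank path that stays positive --- after which the process coincides in law with the uniform model started from scratch and the Fulman--Goldstein bound finishes. You instead perform an exact algebraic reduction: two congruences (one deterministic, normalizing $M_{m_0}$ to $\diag(N_1,0_c)$ via a full-rank principal minor as in Claim \ref{claim:principleminor}, and one random Schur complement in $N_1$) reduce $Q(M_n(M_{m_0}))$ to $Q(S')$ with $S'$ of the stated bordered form, and then, on the full-row-rank event for $A'_{II}$, to $Q(V^TB''V)$ with $V^TB''V$ exactly uniform of size $n-m_0-c$; all the steps (preservation of uniformity under fixed congruence, conditional translation invariance of $B''$, surjectivity of $B''\mapsto V^TB''V$ by extending $V$ to an invertible matrix, and the union bound for $\Pr(G^c)\le q^{c-(n-m_0)}$) check out. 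Your bound $O(q^{-(n-m_0-c)})\le O(q^{-(n-2m_0)})$ is indeed sharper than the stated one whenever the latter is nontrivial. Two small points worth recording: in the alternating case $\rank(M_{m_0})=m_0-c$ is even, so $n-m_0-c\equiv n\pmod 2$ and the Fulman--Goldstein comparison targets the correct $Q_{alt,e}$ or $Q_{alt,o}$; and the final total variation estimate should be assembled as $\Pr(G^c)$ plus the conditional-on-$G$ discrepancy. The trade-off is exactly as you say: your method exploits exact uniformity (translation invariance and the surjective-restriction step) and so cannot be perturbed to the near uniform setting, whereas the paper's Markov-chain argument uses only the one-step transition probabilities and is therefore the version that plugs into the proofs of Theorems \ref{thm:sym} and \ref{thm:alt} via Propositions \ref{prop:rankevolution:sym} and \ref{prop:rankevolution:alt}.
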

We remark that the range of $m_0$ above is nearly optimal as the result no longer holds for $m_0 \ge (1+o(1))n/2$. We also note that indeed this result plays an important role in our deduction of Theorem \ref{thm:sym}
 and Theorem \ref{thm:alt}, see also Proposition \ref{prop:eventualequiv}. 

We next focus on a few other models that can be treated using our method. 

\begin{theorem}[Perturbation of $\GL_n$]\label{thm:GL:1} Let $A_n$ be chosen uniformly from $\GL_n(\F_q)$ and let $M_n= A_n-I$, then there exist constants $C,c$ such that
	$$\|Q(M_n) - Q_\infty\|_{TV} \le \Big(\frac{C}{q}\Big)^{cn}.$$
\end{theorem}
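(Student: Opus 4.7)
The plan is to compute the $q$-binomial moments $\E[\binom{Q(M_n)}{k}_q]$ exactly by a symmetry argument, observe they coincide with $\E[\binom{Q_\infty}{k}_q]$ for every $k \le n$, and then invert the resulting triangular moment system to bound the pointwise probability differences. The $\GL_n(\F_q)$-conjugation symmetry available in this perturbation model replaces the swapping machinery used for Theorem~\ref{thm:sq}.

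\emph{Moment computation.} The quantity $\binom{\dim\ker(A_n-I)}{k}_q$ counts $k$-dimensional subspaces contained in $\ker(A_n - I)$. Because the uniform measure on $\GL_n(\F_q)$ is conjugation invariant, every $k$-dim subspace $W$ is equally likely to be fixed pointwise by $A_n$, so
\[
\E\Big[\binom{Q(M_n)}{k}_q\Big] = \binom{n}{k}_q \cdot \P\big(A_n e_i = e_i,\ i=1,\ldots,k\big).
\]
The event on the right forces the first $k$ columns of $A_n$ to be $e_1,\ldots,e_k$; such $A_n$ have the block form $\bigl(\begin{smallmatrix}I_k & B\\ 0 & C\end{smallmatrix}\bigr)$ with $B \in \Mat_{k,n-k}(\F_q)$ arbitrary and $C \in \GL_{n-k}(\F_q)$, contributing probability $q^{k(n-k)}|\GL_{n-k}|/|\GL_n|$. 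Using $|\GL_n| = q^{\binom{n}{2}}\prod_{i=1}^n(q^i-1)$ this simplifies to
\[
\E\Big[\binom{Q(M_n)}{k}_q\Big] = \frac{q^{-\binom{k}{2}}}{\prod_{i=1}^k (q^i-1)},
\]
which is exactly $\E[\binom{Q_\infty}{k}_q]$. The latter identification follows because the analogous computation for uniform $B_n \in \Mat_n(\F_q)$ yields $\binom{n}{k}_q q^{-nk}$, tending to $q^{-\binom{k}{2}}/\prod_{i=1}^k (q^i-1)$ as $n\to\infty$; combined with $Q(B_n) \to Q_\infty$ (Theorem~\ref{thm:sq}), this pins down $\E[\binom{Q_\infty}{k}_q]$.

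\emph{Moment inversion and conclusion.} Set $\delta_j := \P(Q(M_n)=j) - \P(Q_\infty=j)$ for $0 \le j \le n$ and $\tau_k := \sum_{j > n} \binom{j}{k}_q \P(Q_\infty = j)$. The moment identity gives the triangular system $\sum_{j=0}^n \binom{j}{k}_q \delta_j = \tau_k$ for $k=0,\ldots,n$. The matrix with entries $\binom{j}{k}_q$ is upper triangular with unit diagonal, and its inverse has entries $(-1)^{j-k}q^{\binom{j-k}{2}}\binom{j}{k}_q$ by the standard $q$-binomial inversion, so
\[
\delta_j = \sum_{k=j}^n (-1)^{k-j} q^{\binom{k-j}{2}} \binom{k}{j}_q \tau_k.
\]
Using $\P(Q_\infty = l) \le C q^{-l^2}$ and $\binom{l}{k}_q \le 2^k q^{k(l-k)}$, the $l = n+1$ term dominates $\tau_k$, giving $|\tau_k| \le q^{-(n+1)^2 + k(n+1-k) + O(n)}$. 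A short exponent computation then yields $|\delta_j| \le q^{-\Omega(n^2)}$ uniformly in $j \le n$, and the tail satisfies $\sum_{j>n}\P(Q_\infty = j) \le C q^{-(n+1)^2}$. Summing gives $\|Q(M_n) - Q_\infty\|_{TV} \le q^{-cn^2}$, substantially stronger than the claimed $(C'/q)^{cn}$.

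The main obstacle is the exponent bookkeeping in the inversion step: the inversion coefficients $q^{\binom{k-j}{2}}\binom{k}{j}_q$ can be as large as $q^{\Theta(n^2)}$ while $\tau_k$ is only as small as $q^{-\Theta(n^2)}$, so one must track the net cancellation to confirm that the combined exponent is $-\Omega(n^2)$. The conceptual crux is that the $\GL_n$-symmetry in the moment step evaluates the $q$-binomial moments in closed form, an identity not available for the general random matrix models where the swapping method is required.
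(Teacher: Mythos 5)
Your proposal is correct, and it takes a genuinely different route from the paper. The paper proves Theorem \ref{thm:GL:1} with the same column-exposure machinery as Section \ref{section:iid}: Claim \ref{claim:GLn} lets one sample $A_n$ column by column uniformly outside the current span, Lemma \ref{GL:submatrix:FR} gives full rank of $W_k(M_n)$ for $k\le(1/2-\eps)n$, and Proposition \ref{prop:GL:evolution} shows $|\P((X_{k+1}-\Be_{k+1})\cdot\Bw=0)-1/q|\le q^{(-1/2+\eps)n}$ for normal vectors $\Bw$, after which Lemma \ref{unconcimpliesuniform} matches the rank evolution with the uniform model, yielding $(C/q)^{cn}$. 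You instead exploit the conjugation invariance of Haar measure on $\GL_n(\F_q)$ to evaluate the subspace-counting moments $\E[\binom{Q(M_n)}{k}_q]$ in closed form; I have checked your block count $q^{k(n-k)}|\GL_{n-k}|/|\GL_n|=q^{-\binom{k}{2}}/\prod_{i=n-k+1}^n(q^i-1)$ and the resulting identity $\E[\binom{Q(M_n)}{k}_q]=q^{-\binom{k}{2}}/\prod_{i=1}^k(q^i-1)$ for all $k\le n$, which indeed equals $\E[\binom{Q_\infty}{k}_q]$. Since $Q(M_n)\le n$, the moment system is a finite unitriangular system in $\delta_0,\dots,\delta_n$ whose only source of error is the tail $\tau_k$, and the exponent bookkeeping you flagged does close: with $m=n+1$ one finds $\binom{k-j}{2}+j(k-j)+k(m-k)-m^2=-\binom{k+1}{2}-\binom{j}{2}+km-m^2\le-\binom{m+1}{2}-\binom{j}{2}$, so $|\delta_j|\le C\,n\,4^n q^{-\binom{n+2}{2}}$ and the total variation distance is $q^{-\Omega(n^2)}$, strictly stronger than the theorem's $(C/q)^{cn}$. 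Two small points to tighten: the identification of $\E[\binom{Q_\infty}{k}_q]$ via the limit of $\binom{n}{k}_q q^{-nk}$ requires exchanging limit and expectation, which needs the uniform bound $\P(Q(M_{\un,n})=j)\le Cq^{-j^2}$ (available from the exact formula quoted before \eqref{eqn:FG:1}), or one can verify the identity directly from the formula for $\P(Q_\infty=k)$; and your prose statement of the inverse matrix entries has the roles of $j,k$ swapped relative to your (correct) displayed formula for $\delta_j$. The trade-off between the approaches is clear: yours is shorter and gives a near-optimal $q^{-\Theta(n^2)}$ rate but relies on the exact $\GL_n$-symmetry, which is special to this model (it is broken for the corner model of Theorem \ref{thm:GL:2} and for the near uniform models), whereas the paper's exposure argument is weaker here but uniform across all the models treated.
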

This model was first considered in \cite{Wa} by Washington in his study of the Cohen-Lenstra heuristic, where it was shown that $\lim_{n \to \infty} \P(Q(M_n)=r) = Q_\infty(r)$. Here we show that the rate of convergence is extremely fast.  We also remark that one might be able to study this model using the cycle index technique by Kung and Stong (see for instance \cite{Fulman-BAMS}), however it is not clear how far one can quantify the speed of convergence.

Finally, our method also gives the following rank statistics of corners of $\GL_n(\F_q)$, a model considered recently by Van Peski \cite{Peski} in his study of the singular numbers of products.

\begin{theorem}[Corner of $\GL_n$]\label{thm:GL:2} Let $\eps>0$ be given and let $A_n$ be chosen uniformly from $\GL_n(\F_q)$. Let $M_{n'}$ be the top left corner of size $n'$ of $A_n$, where $n'\le (1-\eps)n$. 
	$$\|Q(M_{n'}) - Q_{\infty}\|_{TV} \le \frac{3}{q^{n'}} + \frac{2^{\min(n', \eps n)+1}}{q^{\eps n}}.$$
\end{theorem}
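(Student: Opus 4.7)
The plan is to bound the target TV distance by comparing $M_{n'}$ to the uniform $n'\times n'$ model $M_{\un,n'}$ and applying the Fulman--Goldstein estimate \eqref{eqn:FG:1}. By the triangle inequality,
\[
\|Q(M_{n'}) - Q_\infty\|_{TV} \le \|Q(M_{n'}) - Q(M_{\un,n'})\|_{TV} + \|Q(M_{\un,n'}) - Q_\infty\|_{TV},
\]
and \eqref{eqn:FG:1} gives $\|Q(M_{\un,n'}) - Q_\infty\|_{TV} \le 3/q^{n'+1}$, accounting for the $3/q^{n'}$ summand in the claimed bound. Thus it will suffice to show the first summand on the right-hand side is at most $2^{\min(n',\eps n)+1}/q^{\eps n}$.

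To control that first summand I would compute the density ratio between the two distributions exactly. Writing $\Pr(M_{n'} = M_0) = N(M_0)/|\GL_n(\F_q)|$, where $N(M_0)$ counts extensions of $M_0$ to an invertible $n\times n$ matrix, one observes that $N(M_0)$ is invariant under $M_0 \mapsto P M_0 Q$ for $P, Q \in \GL_{n'}(\F_q)$ (conjugate $A_n$ by the corresponding block-diagonal elements of $\GL_n(\F_q)$), so $N(M_0)$ depends only on $r := \rank M_0$. Taking $M_0 = \diag(I_r, 0)$ and performing block elimination on $\begin{pmatrix} I_r & 0 & B_1 \\ 0 & 0 & B_2 \\ C_1 & C_2 & D \end{pmatrix}$ reduces the invertibility condition to that of an $s \times s$ matrix (with $s = n - n' + k$ and $k = n' - r$) whose top-left $k \times k$ block vanishes; counting these via ordered bases $v_1, \dots, v_s$ of $\F_q^s$ with $v_1, \dots, v_k$ constrained to a fixed $(s-k)$-dimensional coordinate subspace, a direct manipulation yields
\[
\frac{\Pr(Q(M_{n'})=k)}{\Pr(Q(M_{\un,n'})=k)} = R_k := \frac{\prod_{i=n-n'-k+1}^{n-n'}(1-q^{-i})}{\prod_{i=n-n'+1}^{n}(1-q^{-i})}
\]
for $0 \le k \le n - n'$, and $R_k = 0$ otherwise.

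With $R_k$ explicit the remainder is analytic. For $k \le n - n'$ every factor in both products is of the form $(1-q^{-i})$ with $i \ge 1$, so standard geometric-series estimates give that the numerator differs from $1$ by $O(q^{-(n-n'-k+1)})$ and the denominator from $1$ by $O(q^{-(n-n')})$; hence $|R_k - 1| = O(q^{-(n-n'-k+1)})$. Combining with $\Pr(Q(M_{\un,n'})=k) = O(q^{-k^2})$ and summing,
\[
\sum_{k=0}^{n-n'} \Pr(Q(M_{\un,n'})=k)\,|R_k - 1| = O\Big(q^{-(n-n')}\sum_{k\ge 0} q^{-k^2+k}\Big) = O(q^{-\eps n}),
\]
while the contribution from $k > n - n'$ (where $R_k = 0$, so only $\Pr(Q(M_{\un,n'}) = k)$ enters the symmetric difference) is of lower order $O(q^{-(n-n'+1)^2})$. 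The exact factor $2^{\min(n',\eps n)+1}$ in the theorem statement can be read off by replacing the sharp logarithmic estimate with the cruder bound $\prod_{j=1}^{n'}(1 - q^{-(n-n'+j)})^{-1} \le 2^{\min(n',\eps n)}$ when inverting the denominator of $R_k$.

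The main obstacle will be the combinatorial piece: carefully performing the block elimination and enumerating the $s \times s$ invertible matrices with a prescribed vanishing $k \times k$ top-left block in order to arrive at the clean product formula for $R_k$. Once $R_k$ is in hand, the triangle inequality combined with Fulman--Goldstein and the analytic estimates outlined above closes out the theorem.
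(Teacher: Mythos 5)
Your proposal is correct in substance but takes a genuinely different route from the paper. The paper reuses its column-exposure machinery: it samples $A_n$ column by column via Claim~\ref{claim:GLn}, shows in Proposition~\ref{prop:corner:evolution} that for any normal vector $\Bw$ of $W_k(M_{n'})$ one has $\P(\Bw\cdot X_{k+1}'=0)=1/q+O(q^{-(n-k-1)})$ (because $X_{k+1}$ has a uniform component in the $(n-k)$-dimensional complement of $W_k(A_n)$), feeds this into Lemma~\ref{unconcimpliesuniform} to match each transition probability with the uniform model, and then takes a union bound over the $2^{\min(n',\eps n)}$ rank-evolution paths after using Lemma~\ref{GL:submatrix:FR} to reach a full-rank stage --- that union bound, not any product estimate, is where the factor $2^{\min(n',\eps n)+1}$ in the statement comes from. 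You instead compute the law of $Q(M_{n'})$ exactly: the invariance $N(M_0)=N(PM_0Q)$ is correct (it comes from left and right multiplication of $A_n$ by $\diag(P,I)$ and $\diag(Q,I)$, not conjugation), the block elimination and the ordered-basis count do yield your $R_k$ (the powers of $q$ cancel to give exactly the stated ratio of products, and $R_k=0$ precisely when $k>n-n'$), and the estimates $|R_k-1|=O(q^{-(n-n'-k+1)})$ together with $\P(Q(M_{\un,n'})=k)=O(q^{-k^2})$ close the argument since $n-n'\ge\eps n$. Your route is more computational up front but buys an absolute-constant bound $O(q^{-(n-n')})$ for the comparison term, which is strictly sharper than $2^{\min(n',\eps n)+1}q^{-\eps n}$ for large $n$; the paper's route avoids exact enumeration and is the same soft argument used for its other models. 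Both approaches extend to an arbitrary $n'\times n'$ minor, yours because row and column permutations preserve the uniform measure on $\GL_n(\F_q)$.
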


Note there's nothing special about the upper left corner so the same result holds for any fixed $n'\times n'$ minor.

\subsection{Organization of Paper}
In Section~\ref{section:iid} we prove Theorems~\ref{thm:main:1} and \ref{thm:rec}. To do this we introduce some elementary anti-concentration probabilities and a notion of structure in subsection~\ref{subsection:concentration}. In that section we count the number of structured vectors in Lemma~\ref{lemma:counting} which is used to bound the probability that there is a structured normal vector to the column span in both the independent case, in Proposition~\ref{prop:W_m}, and the symmetric and alternating cases in Proposition~\ref{prop:W_m:sym}. It is shown in Lemma~\ref{unconcimpliesuniform} that the probability a near uniform vector is contained in a subspace with only unstructured normal vectors is similar to the probability a uniform random vector is contained in such a subspace.

For the independent and alternating models these observations suffice. But for the symmetric model the rank sometimes increases by one or two so we need to understand quadratic forms. This analysis uses the decoupling lemma and is done in Lemma~\ref{lemma:quad} of Section \ref{section:sym}.

Finally we will prove Theorem \ref{thm:GL:1} and Theorem \ref{thm:GL:2} in Section \ref{section:GL}.

\subsection{Notation}
We use $M_n=(M_n(ij))_{1\le i,j\le n}$ to denote a square matrix of size $n$ with entries $M_n(ij)$ from $\F_q$. This matrix will be either non-symmetric, symmetric, or alternating of near uniform entries or a perturbed invertible matrix depending on the section. 

We denote by $r_i(M_n), c_j(M_n)$ the $i^{th}$ row and $j^{th}$ column of $M_n$ respectively. When $M_n$ is one of our models of random matrix and $N$ is a fixed $k\times k$ matrix we let $M_n(N)$ denote a matrix with upper left corner $N$ and other entries distributed like those of $M_n$. 

We denote by $W_k(M_n)$ the column span of the first $k$ columns of $M_n$. We write simply $W_k$ when the matrix is clear.

Let $\Be_i$ denote the $i$-th basis vector $(0,\dots,0,1,0,\dots,0)$.

We write $\P$ for probability and $\E$ for expected value. Sometimes we write $\P_X(.)$ to emphasize that the probability is taken with respect to $X$. For an event $\mathcal{E}$, we write $\bar{\mathcal{E}}$ for its complement. 

For a given index set $J \subset [n]$ and a vector $X= (x_1,\dots, x_n)$, we write $X|_J$ or sometimes $X_J$ to be the subvector of $X$ of components indexed from $J$.  Similarly, if $H$ is a subspace then $H|_J$ or $H_J$ is the subspace spanned by $X|_J$ for $ X\in H$.  

For $W\subset V$ a subspace we write $\overline{W}$ for the complement of $W$ in $V$ and $W^\perp$ for the orthogonal complement.

Finally we let 
$$e_q(t) := \exp(2\pi i \ \tr(t)/p)$$ 
where $p$ is the characteristic of $\F_q$. Note that the trace is equally distributed in $\F_p$ so for every $a\in \F_q$ we have,
\begin{align}\label{eqn:trace}
	1_{a=0} = \frac{1}{q}\sum_{t\in \F_q}e_q(at).
\end{align}

\section{Independent Entry Random matrices: proof of Theorem \ref{thm:main:1} and Theorem \ref{thm:rec}}\label{section:iid}
We first note that throughout the paper the constants $c,C'$ in $(C'/q)^{-cn}$ might differ from case to case. 

Let $W_k$ be the subspace generated by $X_1,\dots, X_k$ and $M_k$ be the matrix with $X_1, \dots X_k$ as columns. The approach is a column exposure process illustrated by Figure~\ref{ColumnEP}.

\begin{figure}

	\centering

	\begin{tikzpicture}
		\draw [decorate,decoration={brace,amplitude=10pt},xshift=-4pt,yshift=0pt]
		(0.8,5) -- (4.5,5) node [black,midway,yshift=0.6cm] 
		{\footnotesize $M_k$};
		\draw (1,1) -- (1,4) node[above=0.1in]{$X_{1}$} ;
		\draw (2,1) -- (2,4) node[above=0.1in]{$X_{2}$} ;
		\node at (2.5,2.5)[circle,fill,inner sep=1pt]{};
		\node at (3,2.5)[circle,fill,inner sep=1pt]{};
		\node at (3.5,2.5)[circle,fill,inner sep=1pt]{};
		\draw (4,1)  -- (4,4) node[above=0.1in]{$X_{k}$}; 
		\draw (5,1)  -- (5,4) node[above=0.1in]{$X_{k+1}$}; 
	\end{tikzpicture}
	\caption{column exposure process}
	\label{ColumnEP}
\end{figure}
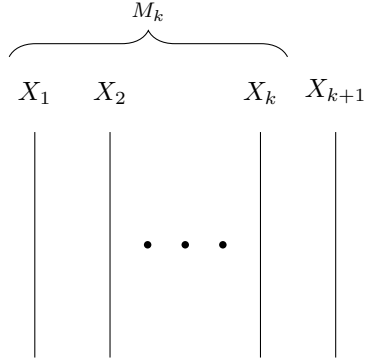

The idea is to show that the rank statistics evolve similarly to how they evolve in the uniform case when we expose the columns $X_1,\dots, X_n$ one by one. Note that in the uniform case we have
\begin{align}\label{eqn:uni:iid}
\P_{X_{k+1}}\Big(\rank(M_{k+1})=\rank(M_k)|W_k \wedge \rank(M_k)=l\Big) &= \P(X_{k+1} \in W_k|W_k \wedge \rank(M_k)=l)\nonumber \\
&= \frac{|W_k|}{q^n} = \frac{1}{q^{n-l}}.
\end{align}

To show that the rank for the near uniform model evolves asymptotically as in \eqref{eqn:uni:iid}, there are two regimes to understand. We first show that for $k\leq (1-\eps) n$, $M_k$ is full rank with high probability. This is Lemma~\ref{lemma:full}. For $k\geq (1-\eps)n$, $M_k$ is no longer full rank with high probability. We show the probability an additional column increases the rank matches with the uniform model by combining Proposition~\ref{prop:W_m} and Lemma~\ref{unconcimpliesuniform}. These pieces are combined to prove the main theorems at the end of the section.

\subsection{Full rank for thin matrices and non-sparsity of normal vectors}\label{subsection:non-sparse} 

We first show that if $k<(1-6\al)n$ then with high probability our vectors $X_i$ are independent.

\begin{lemma}[Odlyzko]\label{odlyzko} Let $V$ be a subspace of codimension $d$ in $\F_q^n$ and $X$ be a random vector with independent near uniform entries except for up to $k$ entries which may have arbitrary distribution. Then,
	\begin{align*}
		\P(X\in V)\leq \Big(\frac{C}{q}\Big)^{d-k}.
	\end{align*}
\end{lemma}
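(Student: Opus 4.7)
The plan is to use the classical sequential coordinate exposure argument underlying Odlyzko's lemma, lightly adapted so as to pay nothing for the at most $k$ arbitrary coordinates. I do not foresee any genuine obstacle here; the only point requiring minor care is the combinatorial identity that the number of pivots introduced below is exactly the codimension $d$, which follows from rank--nullity applied to the successive projections of $V$.

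First I would set up the pivot structure. Let $\pi_j \colon \F_q^n \to \F_q^j$ denote the projection onto the first $j$ coordinates and set $U_j := \pi_j(V)$. Each surjection $U_j \twoheadrightarrow U_{j-1}$ has kernel of dimension $0$ or $1$, so $\dim U_j - \dim U_{j-1} \in \{0,1\}$, and the increments telescope from $\dim U_0 = 0$ to $\dim U_n = n-d$. Hence exactly $d$ indices $j \in [n]$ are \emph{pivots} in the sense that $\dim U_j = \dim U_{j-1}$; at every pivot $j$, the fiber of $U_j \to U_{j-1}$ above any $a \in U_{j-1}$ is a single point, so there is a unique value $a_j(a) \in \F_q$ with $(a, a_j(a)) \in U_j$.

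Using the independence of the entries of $X$ together with the nesting $\{\pi_j(X) \in U_j\} \subseteq \{\pi_{j-1}(X) \in U_{j-1}\}$, I would then factor
\[
\P(X \in V) = \prod_{j=1}^{n} \P\bigl(\pi_j(X) \in U_j \mid \pi_{j-1}(X) \in U_{j-1}\bigr).
\]
For a non-pivot index $j$ no constraint is placed on $X_j$, so the factor equals $1$. For a pivot index $j$, the condition forces $X_j = a_j(\pi_{j-1}(X))$; since $X_j$ is independent of $\pi_{j-1}(X)$, the factor is bounded by $\sup_{a \in \F_q} \P(X_j = a)$, which is $\le C/q$ whenever $X_j$ is near uniform and trivially $\le 1$ otherwise. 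Because only $k$ of the $n$ coordinates are arbitrary, at most $k$ of the $d$ pivot indices can fall among them, so at least $d-k$ pivots each contribute a factor of $C/q$, giving $\P(X \in V) \le (C/q)^{d-k}$ as required.
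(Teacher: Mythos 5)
Your proof is correct and rests on the same mechanism as the paper's: when $X\in V$, exactly $d$ coordinates are forced to take values determined by the others, and independence plus near-uniformity makes at least $d-k$ of these forcings cost a factor $C/q$ each. The paper realizes the $d$ determined coordinates in one step by choosing $n-d$ linearly independent rows of a basis matrix, whereas you identify them sequentially as the pivots of the projection chain $U_0,\dots,U_n$; your version is a slightly more careful bookkeeping of the same argument (in particular it makes explicit why the permutation of coordinates costs nothing against the $k$ arbitrary entries).
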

\begin{proof}
	Let $V = \langle v_1,\dots, v_{n-d}\rangle$ and consider the $n\times (n-d)$ matrix with the $v_i$ as columns. Without loss of generality suppose the first $n-d$ rows are linearly independent. Restricting to the first $n-d$ rows we have $X|_{[n-d]} = c_1v_1|_{[n-d]} + \dots + c_{n-d}v_{n-d}|_{[n-d]}$ for unique $c_i$.. If $X\in V$ then $X = c_1v_1 + \dots + c_{n-d}v_{n-d}$. This implies we have equality in the last $d$ coordinates of $X$. By assumption at least $d-k$ of the last $d$ entries of $X$ are independent and equal a fixed value with probability at most $C/q$. The result follows.
\end{proof}
	
\begin{lemma}\label{lemma:full} Let $\al$ be as in Condition \ref{cond:F}. With probability at least $1-n(C/q)^{5\al n}$, the matrix generated by $X_1,\dots, X_{\lfloor (1-6 \al)n\rfloor}$ has full rank.
\end{lemma}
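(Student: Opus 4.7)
The plan is a straightforward inductive application of Lemma~\ref{odlyzko} together with a union bound. Let $E_k$ denote the event that $X_1,\dots,X_k$ are linearly independent, so that on $E_k$ the subspace $W_k = \langle X_1,\dots,X_k \rangle$ has codimension exactly $n-k$. Our target is to bound $\P(\bar E_{k_0})$ for $k_0 = \lfloor (1-6\alpha)n \rfloor$.

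First I would observe that $\bar E_{k+1} \subseteq \bar E_k \cup \big(E_k \cap \{X_{k+1} \in W_k\}\big)$, so
\begin{equation*}
\P(\bar E_{k+1}) \le \P(\bar E_k) + \P\big(E_k \cap \{X_{k+1} \in W_k\}\big).
\end{equation*}
To bound the second term, condition on $X_1,\dots,X_k$ such that $E_k$ holds. Then $W_k$ is a (deterministic, after conditioning) subspace of codimension $n-k \ge 6\alpha n$. Since $X_{k+1}$ has independent near uniform entries except on the set $F_{k+1}$ with $|F_{k+1}| < \alpha n$, Lemma~\ref{odlyzko} gives
\begin{equation*}
\P_{X_{k+1}}(X_{k+1} \in W_k \mid X_1,\dots,X_k) \le \Big(\frac{C}{q}\Big)^{(n-k) - |F_{k+1}|} \le \Big(\frac{C}{q}\Big)^{6\alpha n - \alpha n} = \Big(\frac{C}{q}\Big)^{5\alpha n}.
\end{equation*}
Integrating over the conditioning and iterating from $k=0$ to $k_0 - 1$ yields $\P(\bar E_{k_0}) \le k_0 (C/q)^{5\alpha n} \le n(C/q)^{5\alpha n}$, which is exactly the claim.

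There is no real obstacle here; the only point requiring care is that Lemma~\ref{odlyzko} is applied to the fixed subspace $W_k$ obtained after conditioning on $X_1,\dots,X_k$, and that the number of arbitrary coordinates $|F_{k+1}|$ of $X_{k+1}$ is strictly less than $\alpha n$ so that the exponent $n - k - |F_{k+1}| \ge 6\alpha n - \alpha n = 5\alpha n$ remains genuinely positive and large. The choice $k_0 = \lfloor(1-6\alpha)n\rfloor$ and the factor $12$ in Condition~\ref{cond:F} leave comfortable slack, and this lemma serves as the easy ``dense regime'' input to the later column-exposure argument.
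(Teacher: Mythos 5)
Your proposal is correct and is essentially the paper's own argument: decompose the failure event according to the first index at which the rank fails to increase, apply Lemma~\ref{odlyzko} to the conditioned subspace $W_k$ of codimension $n-k \ge 6\alpha n$ with at most $\alpha n$ arbitrary coordinates in $X_{k+1}$, and union bound over the at most $n$ steps. The telescoping phrasing via $E_k$ is just a cosmetic repackaging of the paper's union bound over the first place the rank may drop.
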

\begin{proof} Recall that $X_i$ has type $F_i$ and $|F_i|< \al n$.
	If the vectors $X_1,\dots X_{\lfloor (1-6 \al)n\rfloor}$ do not have full rank then for some $i$ we have,
	\begin{align*}
		X_i\in \langle X_1\dots X_{i-1}\rangle \text{ and } X_1\dots X_{i-1}\text{ are linearly independent}.
	\end{align*}
	By Lemma~\ref{odlyzko} we can bound the probability of this event by,
	\begin{align*}
		\P(X_i\in \langle X_1\dots X_{i-1}\rangle | X_1\dots X_{i-1}\text{ are linearly independent})&\leq (C/q)^{n-i+1-n\al} \\ 
		&\leq (C/q)^{n - (\lfloor (1-6 \al)n\rfloor -n\al} \\
		&\leq (C/q)^{5\al n}.
	\end{align*}
	Taking the union bound over all $(1-6 \al)n$ places where the rank may drop we get the desired bound.
\end{proof}

Therefore, with a loss of at most $n(C/q)^{5\al n}$ in probability it suffices to assume that $W_{ (1-6\al)n}$ has full rank. 

Next we show that $W_k$ does not have a ``sparse" normal vector for $k\ge (1-6\al)n$ with high probability. This is important to the argument because we allow some entries of our matrix to be deterministic. If we had a normal vector $\Ba$ with support contained in one of our bad sets $|F_i|$ we wouldn't be able to estimate the probability $X_{k+1}\cdot \Ba=0$. Roughly speaking, these events are needed towards establishing any asymptotic form of \eqref{eqn:uni:iid} as the event $X_{k+1}\in W_k$ is equivalent with $X_{k+1} \cdot \Ba=0$ for all normal vectors $\Ba$ of $W_k$.

\begin{lemma}\label{lemma:sparse} Let $\al$ be as in Condition \ref{cond:F}. Then there exist constants $C',c$ such that the following hold. For any $(1-6 \al)n \le k \le n$, with probability at least $1-(C'/q)^{cn}$ the following holds: any normal vector $\Ba$ of $W_k$ has at least $\al n$ non-zero coordinates.
\end{lemma}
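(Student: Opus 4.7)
The plan is a union bound over all non-zero $\Ba \in \F_q^n$ with $|\supp(\Ba)| < \al n$. First, I would count such vectors:
\[
\#\{ \Ba \ne 0 : |\supp(\Ba)| < \al n \} \le \sum_{s=1}^{\lfloor \al n \rfloor} \binom{n}{s} (q-1)^s \le n \, (e/\al)^{\al n} q^{\al n},
\]
using the standard bound $\binom{n}{\lfloor \al n \rfloor} \le (e/\al)^{\al n}$ (valid since $\al$ is small).

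For a fixed such $\Ba$ with support $S$, the independence of the columns gives $\P(\Ba \perp W_k) = \prod_{i=1}^k \P(X_i \cdot \Ba = 0)$, so the core single-column estimate I would establish is that $\P(X_i \cdot \Ba = 0) \le C/q$ whenever $S \not\subseteq F_i$. Indeed, in that case there exists $j \in S \setminus F_i$ for which $X_i(j)$ is near uniform and independent of the remaining entries of $X_i$; conditioning on those other entries reduces $X_i \cdot \Ba$ to $\Ba_j X_i(j) + c$ for some constant $c$, and since $\Ba_j \ne 0$ (as $j \in S$), the near uniform hypothesis on $X_i(j)$ delivers the bound.

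Next, I would count how many indices $i \in [k]$ have $S \not\subseteq F_i$. Picking any fixed $j_0 \in S$ (possible since $\Ba \ne 0$), the condition $S \subseteq F_i$ forces $j_0 \in F_i$, which by Condition~\ref{cond:F} happens for at most $(1-12\al)n$ values of $i$. Since $k \ge (1-6\al)n$, at least $k - (1-12\al)n \ge 6\al n$ indices in $[k]$ satisfy $S \not\subseteq F_i$, yielding $\P(\Ba \perp W_k) \le (C/q)^{6\al n}$. Combining with the sparse vector count,
\[
\P\bigl(W_k \text{ has a non-zero sparse normal vector}\bigr) \le n (e/\al)^{\al n} q^{\al n} (C/q)^{6\al n} = n (eC^6/\al)^{\al n} q^{-5\al n},
\]
which is of the form $(C'/q)^{cn}$ for suitable $c, C'$ depending on $\al$ and $C$.

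The only real obstacle is ensuring the net decay from the per-vector estimate dominates the sparse vector count, and this is exactly where the $(1-12\al)n$ threshold in Condition~\ref{cond:F} is used: the $6\al n$ effective decay exponent absorbs the $q^{\al n}$ coming from the number of sparse supports and still leaves $q^{-5\al n}$ to spare. As long as $\al$ is kept small and positive, the bound goes through without further difficulty.
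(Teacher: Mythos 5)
Your proof is correct and follows essentially the same route as the paper's: a union bound over sparse vectors, the per-column estimate $\P(X_i\cdot \Ba=0)\le C/q$ whenever $S\not\subseteq F_i$, and the count of at least $k-(1-12\al)n\ge 6\al n$ such columns via Condition~\ref{cond:F}. Your sparse-vector count $n(e/\al)^{\al n}q^{\al n}$ is marginally sharper than the paper's $2^nq^{\al n}$, but the argument and conclusion are the same.
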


\begin{proof} Let $\Ba$ be a prospective normal vector with $S=\supp(\Ba)$, and $s=|S| \le \al n$. There are less than $2^n q^{\al n}$ such vectors. For each $1\le i\le k$, if we have that $\bar{F}_i \cap S \neq \emptyset$ (that is $S\not \subset F_i$), then by \eqref{eqn:xi}, it is clear that 
$$\P(X_i \cdot \Ba=0) \le C/q.$$ 
Letting $k'$ denote the number of such indices, by Condition \ref{cond:F} we see that 
$$k' \ge k- (1 -12 \al)n \ge 6\al n.$$
Hence the probability that there exists a sparse $\Ba$ serving as the normal vector to $W_k$ is bounded by
$$ \sum_{\Ba, |\supp(\Ba)| \le \al n} \P(X_i\cdot \Ba=0, 1\le i\le \lfloor 6 \al n \rfloor) \le 2^n q^{\al n} (C/q)^{\lfloor 6 \al n \rfloor} \le (C'/q)^{cn}.$$ 
\end{proof}
In the next step we show that with high probability the normal vectors are not only sparse but do not have structures (see Proposition \ref{prop:W_m}). Thanks to the near uniform assumption, our method is much simpler compared to those of \cite{LMNg, M1,NgP, NgW1, W2}.
 
\subsection{Anti-concentration probability}\label{subsection:concentration} We first introduce concentration inequalities for vectors with randomness from \eqref{eqn:xi}. Let $X=(x_1,\dots,x_n)$ be a random vector of type $F$. Let $c_k^{i} = \Pr(x_i=k)$ ,where by definition we know that $c_k^i\leq C/q$.

Let $X$ be a near uniform random vector of type $F$. By the Fourier transform for any $r\in \F_q$ and fixed $\Ba\in F_q^n$ we have
  \[
    \P(X \cdot \Ba =r) = 
    \E (1_{X \cdot 
    \Ba=r}) = 
    \frac{1}{q} \sum_{t \in \F_q} \prod_{i \notin F} \E e_p(\tr(x_i a_i t)) e_p(-\tr(r't))
  \]
  where $r'$ depends on $r$ and other deterministic entries of $X$.
  By the triangle inequality
  \begin{align}\label{eqn:rho}
    |\P(X \cdot \Ba =r) -\frac{1}{q}| & \le \frac{1}{q} \sum_{t \in \F_q, t\neq 0} \prod_{i\notin F} |\E e_p(\tr(x_i a_i t))| \nonumber \\
	  & =  \frac{1}{q} \sum_{t \in \F_q, t\neq 0} \prod_{i \notin F} |\sum_{k\in\F_q} c^{i}_k e_p(\tr(k a_i t)) | \nonumber \\
    & =: \frac{1}{q} \sum_{t\neq 0} \prod_{i\notin F} f_i(ta_i).
  \end{align}
 Where $f_i(t) =|\sum_{k\in\F_q} c^{i}_k e_p(\tr(k t))|$.  Motivated by this, for convenience we define the following,
$$\rho_F(\Ba):=\frac{1}{q} \sum_{t\neq 0} \prod_{i\notin F} f_i(ta_i).$$

Conceptually $\rho_F(\Ba)$ measures the ``structure" of $\Ba$. Vectors with smaller $\rho_F$ are ``less structured". The concept is a waypoint in the proof. At a high level our approach is to show that normal vectors are likely unstructured and unstructured vectors are orthogonal to near uniform vectors with probability close to uniform.

The goal of this subsection is to show the following proposition which states that with high probability the column space of $W_k$ has no structured normal vectors.

\begin{prop}\label{prop:W_m} Given $C$ and $\al$ where $\al$ is sufficiently small. There exist constants $C',c$ such that the following holds for $m\ge (1-6\al)n$: with probability at least $1- (C'/q)^{cn}$ with respect to $X_1,\dots, X_{m}$ any nonzero vector $\Ba=(a_1,\dots,a_n)$ orthogonal to each $X_i$ has, 
	$$ \rho_{F_{m+1}}(\Ba) \le (C'/q)^{cn}.$$
\end{prop}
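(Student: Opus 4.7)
The plan is to combine a stratification by $\rho_{F_{m+1}}$ with the Fourier-based concentration estimate already developed in the excerpt. I aim to show that with high probability $W_m^\perp$ contains no ``structured'' vector $\Ba$, where structured means $\rho_{F_{m+1}}(\Ba) > (C'/q)^{cn}$.

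First I would restrict to non-sparse candidates using Lemma~\ref{lemma:sparse}: with probability at least $1 - (C'/q)^{cn}$, every $\Ba \in W_m^\perp \setminus \{0\}$ has $|\supp(\Ba)| \ge \al n$. Since $|F_i| < \al n$ for every $i$, the support of any such $\Ba$ meets the complement of every $F_i$, so for each $i \le m$ the Fourier inequality \eqref{eqn:rho} gives
\[
\P(X_i \cdot \Ba = 0) \le \frac{1}{q} + \rho_{F_i}(\Ba).
\]
A useful structural remark is that $\rho_{F_i}(\Ba)$ is comparable to $\rho_{F_{m+1}}(\Ba)$: the two product representations differ by at most $|F_i \triangle F_{m+1}| \le 2\al n$ factors $f_j \in [0,1]$, so they control each other up to a fixed power of $q$ on each level set.

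Next I would stratify candidates by level sets
\[
L_j := \{\Ba \in \F_q^n \setminus \{0\} : |\supp(\Ba)| \ge \al n,\ q^{-j-1} \le \rho_{F_{m+1}}(\Ba) < q^{-j}\}
\]
for $j = 0, 1, \dots, \lfloor cn \rfloor$, apply the counting Lemma~\ref{lemma:counting} to bound $|L_j|$, and then bound the expected number of $\Ba \in L_j$ contained in $W_m^\perp$ by
\[
|L_j| \cdot \prod_{i \le m}\bigl(1/q + \rho_{F_i}(\Ba)\bigr).
\]
Summing this quantity across levels and appealing to Markov's inequality then yields the desired high-probability statement.

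The main obstacle will be the interplay between the counting Lemma~\ref{lemma:counting} and the probability estimate: the counting lemma needs to assert that the number of vectors at level $\tau$ is at most $q^{\beta n} \tau^{-\gamma n}$-type, reflecting that $\rho_{F_{m+1}}(\Ba) \ge \tau$ forces most nonzero coordinates $a_i$ to lie in a small affine structure along which the Fourier phases align. Combined with the bound above and the overlap Condition~\ref{cond:F}, which guarantees at least $(1-6\al)n$ near-uniform factors actually contribute to each step, this should produce the exponential decay $(C'/q)^{cn}$ after summing over levels. Tuning the constants $\al$ and $c$ carefully, as well as extracting the correct dependence of the counting bound on the level parameter $j$, will be the delicate points of the proof.
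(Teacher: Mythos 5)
Your overall skeleton (discard sparse normal vectors via Lemma~\ref{lemma:sparse}, count the structured vectors, take a union/first-moment bound) is the same as the paper's, but two of your specific choices create genuine gaps. First, the per-column probability bound $\P(X_i\cdot\Ba=0)\le 1/q+\rho_{F_i}(\Ba)$ is exactly the wrong tool here: the vectors you are trying to exclude are the \emph{structured} ones, for which $\rho$ is large, so on the top levels $L_0,L_1,\dots$ this bound gives essentially no decay, while the counting bound for those levels is far too weak to carry the union bound on its own (at threshold $\tau\approx 1$ the set $T$ in \eqref{eqn:T} is all of $\F_q$ and Lemma~\ref{lemma:counting} returns more than $q^n$ vectors). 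The fix is the one the paper uses: once $|\supp(\Ba)|\ge\al n>|F_i|$, the support meets $\bar F_i$ for every $i$, so the crude near-uniformity bound $\P(X_i\cdot\Ba=0)\le C/q$ holds for every column regardless of how structured $\Ba$ is, and $(C/q)^{m}$ against the count of the single bad set $\CB$ already closes the argument. Relatedly, your claim that $\rho_{F_i}$ and $\rho_{F_{m+1}}$ ``control each other up to a fixed power of $q$'' is false in the direction you need: deleting up to $2\al n$ factors $f_j\in[0,1]$ from the product can only increase it, and by an unbounded amount when some deleted factor is near $0$ (e.g.\ a genuinely uniform entry gives $f_j(y)=\mathbf{1}_{y=0}$).

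Second, your stratification into level sets $L_j$ requires a counting lemma with explicit dependence on the level $\tau$, which you correctly flag as ``the delicate point'' but do not supply; Lemma~\ref{lemma:counting} as stated only counts vectors above the single threshold $(Kq^{-1/2})^M$. The paper avoids the issue entirely: it fixes one threshold by choosing $M=\beta n$, $K=q^{\beta}$, defines $\CB$ as everything above it, and shows $|\CB|\cdot(C/q)^{(1-6\al)n}\le q^{6\al n}(2C)^n(C/K^2)^{n-M-|F|}$ is exponentially small. Your multi-level refinement buys nothing here and is where the proof would break down; collapsing it to the single threshold and replacing the $\rho$-based column bound by the crude $C/q$ bound recovers the paper's proof.
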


To prove the above result, observe that if $a \neq 0$ then, noting that $\sum_{t\in \F_q} e_p(\tr(a t))=q 1_{a=0}$, we have
\begin{align*}
	\sum_{t\in \F_q} f_i(ta)^2 &=\sum_{t\in F_q}\Big|\sum_{k} c^{i}_k e_p(\tr(kta))\Big|^2\\ &=\sum_{t\in F_q}\Big[ \sum_k (c^{i}_k)^2  + \sum_{k \neq k'} c^{i}_k c^{i}_{k'} e_p(\tr((k-k')ta)) \Big]\\
	&= q\sum_k (c^{i}_k)^2 \le q\max_k c^{i}_k \sum_k c^{i}_k \le C.
\end{align*}

As such, for any $K>0$, let $T \subset \F_q$ be the set of $a$ where $|f_i(ta)| \ge Kq^{-1/2}$ then 
\begin{equation}\label{eqn:T}
|T| \le Cq/K^2.
\end{equation}
  
This implies the following claim.
\begin{claim}\label{claim:1} For any $\M$ if $\Ba=(a_1,\dots,a_n)$ is such that for any $t\neq 0$, it has at least $M$ indices $i\in [n]\bs F$ such that $ta_i \notin T$. Then we have 
	\begin{align}\label{eqn:bad}\rho_F(\Ba) = \frac{1}{q} \sum_t \prod_{i \notin F} f_i(ta_i) \le (K q^{-1/2})^M.\end{align}
\end{claim}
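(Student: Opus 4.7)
The plan is short because this claim is a direct unpacking of definitions, combined with the bound on $|T|$ established just before in \eqref{eqn:T}. First I would record the elementary pointwise bound $f_i(s) \le 1$ for every $s \in \F_q$: indeed, since the coefficients $c_k^{i} = \Pr(x_i = k)$ form a probability distribution on $\F_q$, the triangle inequality gives
$$f_i(s) = \Big| \sum_{k \in \F_q} c_k^{i} e_p(\tr(k s)) \Big| \le \sum_{k \in \F_q} c_k^{i} = 1.$$
This lets me bound any factor in $\prod_{i \notin F} f_i(t a_i)$ by $1$ by default.

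Next I would use the hypothesis directly. For each fixed $t \neq 0$, by assumption there are at least $M$ indices $i \in [n] \setminus F$ for which $t a_i \notin T$; by the very definition of $T$ just above in \eqref{eqn:T}, this means $f_i(t a_i) < K q^{-1/2}$ at each of those indices. Bounding these $M$ factors by $K q^{-1/2}$ and estimating all remaining factors trivially by $1$ via the preceding paragraph yields, uniformly in $t \neq 0$,
$$\prod_{i \notin F} f_i(t a_i) \le \left( \frac{K}{\sqrt{q}} \right)^{M}.$$

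Finally I would sum this estimate over the $q-1$ nonzero values of $t$ and divide by $q$, recalling from the definition just below \eqref{eqn:rho} that the sum in $\rho_F(\Ba)$ runs over $t \neq 0$:
$$\rho_F(\Ba) = \frac{1}{q}\sum_{t \neq 0} \prod_{i \notin F} f_i(t a_i) \le \frac{q-1}{q} \left( \frac{K}{\sqrt{q}} \right)^{M} \le \left( \frac{K}{\sqrt{q}} \right)^{M},$$
which is exactly the claimed inequality \eqref{eqn:bad}. There is no genuine obstacle: the only substance is the two ingredients $f_i \le 1$ and $|T| \le Cq/K^{2}$, both of which have already been established, so the argument reduces to a one-line product estimate per nonzero frequency $t$. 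The real work done by this claim is conceptual, in that it packages the Fourier/$\rho_F$ machinery into an easily verifiable sufficient condition ``$ta_i$ lies outside $T$ often enough,'' which will be the combinatorial hypothesis verified later via the counting Lemma~\ref{lemma:counting} when establishing Proposition~\ref{prop:W_m}.
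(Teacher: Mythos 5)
Your proof is correct and is exactly the argument the paper intends: the paper states the claim as an immediate consequence of the definition of $T$ (so that $ta_i\notin T$ forces $f_i(ta_i)<Kq^{-1/2}$) together with the trivial bound $f_i\le 1$, which is precisely your two ingredients. You also rightly note that the sum in $\rho_F$ runs only over $t\neq 0$ (despite the claim's notation $\sum_t$), which is needed since the $t=0$ term would contribute $1/q$ and spoil the bound.
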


Thus towards Proposition \ref{prop:W_m}, it suffices to show the following and then choose suitable $M$ and $K$.

\begin{lemma}\label{lemma:str:iid} With probability  at least $1- q^{6\al n} (2C)^n  (C/K^2)^{n-M-|F|}$ with respect to $X_1,\dots, X_{m}$, any normal vector $\Ba$ of $W_{m}$ satisfies the condition in Claim \ref{claim:1}.
\end{lemma}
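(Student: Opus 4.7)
The plan is a first-moment argument over ``structured'' candidate normal vectors $\Ba$, coupled with the non-sparsity statement of Lemma \ref{lemma:sparse} to obtain a sharp per-vector orthogonality estimate. Call $\Ba$ \emph{structured} if it violates the condition of Claim \ref{claim:1}: there exists $t\ne 0$ with fewer than $M$ indices $i\in[n]\setminus F$ satisfying $ta_i\notin T$, or equivalently at least $n-|F|-M+1$ indices $i\in[n]\setminus F$ with $ta_i\in T$. Since $|T|\le Cq/K^2$ by \eqref{eqn:T}, such vectors concentrate many coordinates in a small subset of $\F_q$.

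First I would count structured vectors. Fixing $t\ne 0$ and a subset $J\subset[n]\setminus F$ of size $n-|F|-M+1$, the coordinates $a_i$ with $i\in J$ are constrained to $t^{-1}T$ giving at most $(Cq/K^2)^{n-|F|-M+1}$ choices, while the remaining $|F|+M-1$ coordinates are free giving at most $q^{|F|+M-1}$ choices. Summing over the $q$ choices of $t$ and the at most $\binom{n-|F|}{M-1}\le 2^n$ choices of $J$, the total number of structured vectors is at most $2^n\,q^{|F|+M}\,(Cq/K^2)^{n-|F|-M+1}$.

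Next I would bound $\P(\Ba\in W_m^\perp)$ for each fixed non-sparse $\Ba$. By Lemma \ref{lemma:sparse}, outside an event of probability $(C'/q)^{cn}$ every normal vector of $W_m$ has support of size at least $\al n$; combined with $|F_i|<\al n$ this guarantees $\supp(\Ba)\not\subset F_i$ for each $i\le m$. Conditioning on all coordinates of $X_i$ save one coordinate outside $F_i$ where $\Ba$ is nonzero, the near-uniform assumption forces $\P(X_i\cdot\Ba=0)\le C/q$, and independence across $i$ then yields
\[
\P(\Ba\in W_m^\perp)\le (C/q)^m\le (C/q)^{(1-6\al)n}.
\]
Combining via Markov's inequality, the expected number of structured non-sparse normal vectors is at most $2^n q^{|F|+M}(Cq/K^2)^{n-|F|-M+1}(C/q)^{(1-6\al)n}$, and regrouping powers of $q$ (noting $q^{n+1-m}\le q^{6\al n+1}$) bounds this by $q^{6\al n}(2C)^n(C/K^2)^{n-|F|-M}$ up to absorbable constant factors; adding the $(C'/q)^{cn}$ sparse-exception probability from Lemma \ref{lemma:sparse} completes the proof.

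The main obstacle is the orthogonality step: one needs the sharp per-vector estimate $(C/q)^m$, not the cruder $(C/q)^{6\al n}$ that would follow from Condition \ref{cond:F} alone. Indeed the structured-vector count grows like $q^{n+1}(C/K^2)^{n-|F|-M}$, so the orthogonality factor must cancel $q^{n+1}$ to leave only $q^{6\al n+1}$; the weaker bound would leave a prohibitive $q^{n+1-6\al n}$. It is precisely Lemma \ref{lemma:sparse} combined with $|F_i|<\al n$ that allows the stronger estimate and yields an exponent of $q$ matching the target.
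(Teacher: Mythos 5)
Your proposal is correct and follows essentially the same route as the paper: count the structured set via the $L^2$ bound $|T|\le Cq/K^2$ (this is the paper's Lemma \ref{lemma:counting}), use Lemma \ref{lemma:sparse} to guarantee $\supp(\Ba)\not\subset F_i$ and hence the per-column estimate $\P(X_i\cdot\Ba=0)\le C/q$ giving $(C/q)^m$ per vector, and conclude by a union bound. The only differences are cosmetic (off-by-one in the count of constrained coordinates, and making the sparse-exception probability explicit).
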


To prove Lemma~\ref{lemma:str:iid} we bound the number of vectors not satisfying equation~\eqref{eqn:bad}. Then we bound the probability each of those vectors is a normal vector to our matrix. Counting the bad set is also important in the symmetric section so we split it into the following lemma.
\begin{lemma}[Counting lemma]\label{lemma:counting} Let $\CB$ be the set of vectors in $\F_p^n$ where 
$$\frac{1}{q} \sum_t \prod_{i \notin F} f_i(ta_i) \ge (K q^{-1/2})^M.$$
Then
$$|\CB| \le q 2^n |T|^{n-M-|F|} q^{M+|F|} .$$ 
\end{lemma}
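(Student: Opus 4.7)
The plan is short: apply the contrapositive of Claim~\ref{claim:1} to reduce $\CB$ to an explicit combinatorial set, then enumerate that set by a union bound over $t$.

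First, if $\Ba \in \CB$, then by Claim~\ref{claim:1} the hypothesis of that claim must fail for $\Ba$, so there exists some $t \neq 0$ with fewer than $M$ indices $i \in [n] \setminus F$ satisfying $ta_i \notin T$. Equivalently, more than $n - |F| - M$ of the coordinates $a_i$ with $i \in [n] \setminus F$ must lie in $t^{-1} T$, which has size $|T|$. (The $t = 0$ term in $\sum_t$ is simply $1$ and contributes $1/q$ to $\rho$, which is harmless for the regime of interest and will be absorbed into the final $2^n$ factor.)

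Next I would perform a union bound over the at most $q$ nonzero values of $t$. For each such $t$, I would bound the number of bad $\Ba$ by the following covering: pick a set $S \subseteq [n] \setminus F$ of exactly $M$ ``unrestricted'' coordinates (allowed to take any value in $\F_q$), force each of the remaining $n - |F| - M$ coordinates in $[n] \setminus F$ to lie in the fixed set $t^{-1} T$, and let the $|F|$ coordinates indexed by $F$ be arbitrary in $\F_q$. Every $\Ba$ in the bad set for this $t$ arises in at least one such way, so the count is at most
$$\binom{n - |F|}{M} \cdot q^M \cdot |T|^{n - |F| - M} \cdot q^{|F|}$$
per $t$, with any overcounting from multiple valid choices of $S$ only helping the upper bound.

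Finally, using $\binom{n - |F|}{M} \le 2^n$ and summing over the fewer than $q$ choices of $t$ yields
$$|\CB| \le q \cdot 2^n \cdot |T|^{n - M - |F|} \cdot q^{M + |F|},$$
which is the claimed estimate. The entire argument is an elementary covering-and-counting estimate and I do not foresee a substantive obstacle; the only care required is keeping straight the exponents of $|T|$ and $q$, and noting that the $t=0$ contribution and the overcounting over $S$ are both negligible relative to the $2^n$ slack.
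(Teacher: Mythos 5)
Your argument is correct and is essentially the paper's own proof: take the contrapositive of Claim~\ref{claim:1} to get, for each bad $\Ba$, some $t_0\neq 0$ with $t_0a_i\in T$ for at least $n-M-|F|$ indices outside $F$, then union-bound over $t_0$, the exceptional index set, and the coordinate values. The only cosmetic differences are your use of $\binom{n-|F|}{M}\le 2^n$ in place of the paper's $\binom{n}{M}$ and your (correct but unnecessary) aside about the $t=0$ term, which is excluded in the definition of $\rho_F$ anyway.
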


\begin{proof} By Claim~\ref{claim:1} for any $\Ba\in\CB$ there exists $t_0$ such that $t_0a_i \in T$ for at least $n-M-|F|$ indices $i$.  The other $M+|F|$ entries of $\Ba$ can be any element of $\F_q$. Thus
$$|\CB| \le q\times \binom{n}{M} |T|^{n-M-|F|} q^{M+|F|}.$$
\end{proof}

\begin{proof}(of Lemma \ref{lemma:str:iid}) We estimate the probability of the complement event. For a given $\Ba=(a_1,\dots, a_n)\in \CB, \Ba \neq 0$, we estimate the probability that it is a normal vector to $W_{m}$. By Lemma \ref{lemma:sparse}, we just need to focus on the event that $\Ba$ has at least $\al n$ non-zero entries. As such, by Condition \ref{cond:F}, for each $1\le i\le m$ we have
$$\P(X_i \cdot \Ba =0) \le C/q.$$
So the probability that $\Ba$ is a normal vector to $W_{m}$ is bounded by 
$$\prod_{i=1}^{m} \P(\Ba \cdot \BX_i=0)\le (C/q)^{m}.$$
Taking a union bound over $\CB$, we obtain an upper bound for the probability of the existence of structured normal vectors, using \eqref{eqn:T}
\begin{equation}\label{eqn:2}
q 2^n |T|^{n-M-|F|} q^{M+|F|}   \times (C/q)^{n-6 \al n} \le  q^{6\al n} (2C)^n (|T|/q)^{n-M-|F|} \le q^{6\al n} (2C)^n  (C/K^2)^{n-M-|F|}.
\end{equation}
\end{proof}

\begin{proof}(of Proposition \ref{prop:W_m})
Choose $M=\beta n, K= q^\beta$ with small $\beta<1/2$ and with even smaller $\al$. We obtain a bound $(C'/q)^{-cn}$ for the bound on probability in Equation~\eqref{eqn:2}, and for the bound on structure in Claim \ref{claim:1}. 
\end{proof}

\subsection{The rank statistics in the exposure process}\label{subsection:exposure}

The motivation to consider structure is if all vectors orthogonal to a given subspace have small $\rho$ then the probability it contains a near uniform random vector behaves like the probability it contains a uniform random vector. 
\begin{lemma}\label{unconcimpliesuniform}
	Let $H\subset  \F_q^n$ be a fixed subspace of codimension $d$ such that for every nonzero $w\in H^\perp$ $|\P(X\cdot w=0) - 1/q|<\delta$ and let $X$ be a near uniform random vector of type $F$ as above. Then
	\begin{align*}
		|\P(X\in H) - 1/q^d| \leq 2 \delta.
	\end{align*}
	Note when applying the lemma it suffices to bound $\rho_F(w)$ because $|\P(X\cdot w = 0) - 1/q|\leq \rho_F(w)$.
\end{lemma}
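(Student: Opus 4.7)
The plan is to expand the indicator $\mathbf{1}_{X \in H}$ via the dual lattice and the additive character identity \eqref{eqn:trace}, then translate the hypothesis into a bound on each character sum. Fix a basis $w_1,\dots,w_d$ of $H^\perp$. Since $X \in H$ iff $X \cdot w_i = 0$ for all $i$, applying \eqref{eqn:trace} coordinate-by-coordinate and expanding gives
\begin{equation*}
\mathbf{1}_{X\in H} \;=\; \prod_{i=1}^d \frac{1}{q}\sum_{t_i\in \F_q} e_q(t_i\, X\cdot w_i) \;=\; \frac{1}{q^d}\sum_{w\in H^\perp} e_q(X\cdot w).
\end{equation*}
Taking expectation and isolating the $w=0$ term,
\begin{equation*}
\P(X\in H) - \frac{1}{q^d} \;=\; \frac{1}{q^d}\sum_{0\neq w\in H^\perp} \E\bigl[e_q(X\cdot w)\bigr].
\end{equation*}

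Second, I would translate the hypothesis into information about these character sums. Applying the same identity to a single linear form, for any nonzero $w \in H^\perp$,
\begin{equation*}
\P(X\cdot w = 0) - \frac{1}{q} \;=\; \frac{1}{q}\sum_{t\in \F_q^*}\E\bigl[e_q(X\cdot (tw))\bigr],
\end{equation*}
so the hypothesis gives $\bigl|\sum_{t\in \F_q^*}\E[e_q(X\cdot(tw))]\bigr| \le q\delta$ for every nonzero $w \in H^\perp$. The key observation is that rather than bound each individual character sum (which would require the stronger hypothesis $|\P(X\cdot w=r)-1/q|<\delta$ for all $r$), we should only sum character sums along orbits of the $\F_q^*$-action.

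Third, the nonzero elements of $H^\perp$ partition into exactly $(q^d-1)/(q-1)$ orbits of size $q-1$ under scalar multiplication by $\F_q^*$. Summing the orbit bound,
\begin{equation*}
\Bigl|\sum_{0\neq w\in H^\perp}\E[e_q(X\cdot w)]\Bigr| \;\le\; \frac{q^d-1}{q-1}\cdot q\delta,
\end{equation*}
and therefore
\begin{equation*}
\Bigl|\P(X\in H) - \frac{1}{q^d}\Bigr| \;\le\; \frac{q(q^d-1)}{q^d(q-1)}\,\delta \;\le\; \frac{q}{q-1}\,\delta \;\le\; 2\delta,
\end{equation*}
using $q \ge 2$ in the last step. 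The parenthetical remark about $\rho_F(w)$ is immediate from \eqref{eqn:rho}, since $|\P(X\cdot w = 0) - 1/q|$ is directly the quantity bounded there by $\rho_F(w)$. There is no real obstacle: the only subtlety is the orbit-grouping step, which is essential because trying to bound each $\E[e_q(X\cdot w)]$ individually would cost a factor of $q$ and would not use the cancellation built into the hypothesis.
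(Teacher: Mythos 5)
Your proof is correct and follows essentially the same route as the paper's: expand $\mathbf{1}_{X\in H}$ over $H^\perp$ via additive characters, group the nonzero dual vectors into $\F_q^*$-orbits (the paper's ``projective equivalence classes''), identify each orbit sum with $q\bigl(\P(X\cdot w=0)-1/q\bigr)$, and conclude with the count $\frac{q(q^d-1)}{q^d(q-1)}\le 2$. Nothing to add.
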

We also refer the reader to \cite{LMNg} for a variant for fields of prime order.
\begin{proof}
	We have the following identity,
	\begin{align*}
		1_{a_1 = 0 \wedge \dots \wedge a_d = 0} = \frac{1}{q^d}\sum_{t_1,\dots,t_d\in\F_q} e_q(a_1t_1 + \dots + a_dt_d).
	\end{align*}
	Therefore, letting $\Bv_1,\dots, \Bv_d$ be a basis for $H^\perp$,
	\begin{align*}
		\P(X\in H) &= \P(X\cdot \Bv_1 \wedge \dots \wedge X\cdot \Bv_d) \\
		&= \E\Big[ \frac{1}{q^d}\sum_{t_1,\dots,t_d\in\F_q} e_q(t_1[X\cdot \Bv_1] + \dots + t_d[X\cdot \Bv_d])\Big] \\
		&= \E\Big[ \frac{1}{q^d} +  \frac{1}{q^d}\sum_{\substack{t_1,\dots,t_d\in\F_q\\ \text{ not all $t_i$ are zero}}} e_q(t_1[X\cdot \Bv_1] + \dots + t_d[X\cdot \Bv_d])\Big]\\
		&=  \frac{1}{q^d} +  \frac{1}{q^d}\sum_{\substack{t_1,\dots,t_d\in\F_q\\ \text{ not all $t_i$ are zero}}} \E e_q(t_1[X\cdot \Bv_1] + \dots + t_d[X\cdot \Bv_d]).
	\end{align*}
	We now split the sum into projective equivalence classes. Let $\sim$ be the equivalence relation given by $(t_1,\dots,d_d)\sim(t_1',\dots,t_d')$ if there exists $t\neq 0$ such that $(t_1,\dots,t_d) = (t\cdot t_1',\dots,t\cdot t_d')$. Not worrying about our choice of representative on account of our inner sum we can continue,
	\begin{align*}
		&= \frac{1}{q^d} + \frac{1}{q^d}\sum_{(t_1,\dots,t_d)\in (\F_q^n)^\times/\sim}\bigg[\sum_{t\in\F_q^\times} e_q(t (t_1[X\cdot \Bv_1] + \dots + t_d[X\cdot \Bv_d]))\bigg] \\
		&= \frac{1}{q^d} + \frac{1}{q^d}\sum_{(t_1,\dots,t_d)\in (\F_q^n)^\times/\sim}q\bigg[\frac{1}{q}\sum_{t\in\F_q^\times} e_q(t(X\cdot \sum_{i=1}^n t_i \Bv_i))\bigg].
	\end{align*}

	Now observe,
	\begin{align*}
		\bigg[\frac{1}{q}\sum_{t\in\F_q^\times} e_q(t(X\cdot \sum_{i=1}^n t_i \Bv_i))\bigg] = \P(X\cdot \sum_{i=1}^n t_i \Bv_i = 0) - 1/q.
	\end{align*}
	
	The $\Bv_i$ are a basis and the $t_i$ are not all zero so $\sum_{i=1}^n t_i\Bv_i$ is a nonzero element of $H^\perp$. By assumption then this is bounded by $\delta$. There are $\frac{q^d-1}{q-1}$ elements in $(t_1,\dots,t_d)\in (\F_q^n)^\times/\sim$. We have,
	\begin{align*}
		\frac{q(q^d-1)}{q^d(q-1)} \leq 2.
	\end{align*}

	So by the triangle inequality we obtain,
	\begin{align*}
		|\P(X\in H) - 1/q^d| \leq 2 \delta.
	\end{align*}

\end{proof}

Let $\CE_m$ be the event considered in Proposition \ref{prop:W_m}, namely the event that all normal vectors $\Ba$ have $\rho(\Ba)<(C'/q)^{cn}$. By the proposition we know that 
$$\P(\CE_m) \ge 1-(C'/q)^{cn}.$$
\begin{proposition}\label{prop:evolution}  Assume that $(1 -6\al) n \le m\le n$ and the non-symmetric matrix $M_n$ is as in Theorem \ref{thm:sq}.  For each $l\le m$ we have 
$$\Big|\P(X_{m+1} \in W_m | \rk(W_m)=l \wedge \CE_m) - (1/q)^{n-l}\Big| \le (C'/q)^{cn}.$$
\end{proposition}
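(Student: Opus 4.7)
The proof will be a direct application of Lemma \ref{unconcimpliesuniform} after conditioning on the realization of $W_m$. First, fix any realization $W_m = H$ that is compatible with the event $\CE_m \cap \{\rk(W_m) = l\}$; such an $H$ is a subspace of codimension $d = n - l$, and by the defining property of $\CE_m$ together with Proposition \ref{prop:W_m}, every nonzero $w \in H^\perp$ satisfies $\rho_{F_{m+1}}(w) \le (C'/q)^{cn}$.

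Next, the columns $X_1, \ldots, X_m$ determine $W_m$, and $X_{m+1}$ is independent of them. Hence, conditionally on $W_m = H$, the vector $X_{m+1}$ still has its original distribution, namely a near uniform vector of type $F_{m+1}$. Invoking the remark at the end of Lemma \ref{unconcimpliesuniform}, which says that $|\P(X_{m+1} \cdot w = 0) - 1/q| \le \rho_{F_{m+1}}(w)$, the structure bound on $H^\perp$ supplies the hypothesis of the lemma with $\delta = (C'/q)^{cn}$. Applying the lemma to $H$ yields
$$\bigl|\P(X_{m+1} \in H) - 1/q^{n-l}\bigr| \le 2 (C'/q)^{cn}.$$

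Finally, this bound is uniform over all subspaces $H$ compatible with the conditioning event, so averaging over the conditional distribution of $W_m$ given $\CE_m \cap \{\rk(W_m) = l\}$ preserves the inequality, producing
$$\Big|\P(X_{m+1} \in W_m \mid \rk(W_m) = l \wedge \CE_m) - (1/q)^{n-l}\Big| \le 2 (C'/q)^{cn},$$
which is the desired estimate after the usual harmless adjustment of $c$ and $C'$.

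The proof is essentially mechanical once Proposition \ref{prop:W_m} and Lemma \ref{unconcimpliesuniform} are in hand; the only subtlety — really a bookkeeping check rather than an obstacle — is verifying that the structure bound produced by $\CE_m$ uses the correct type $F_{m+1}$ needed for analyzing $X_{m+1}$, which is built into the statement of Proposition \ref{prop:W_m}.
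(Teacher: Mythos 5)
Your proof is correct and follows essentially the same route as the paper: on the event $\CE_m$ every normal vector of $W_m$ has $\rho_{F_{m+1}}$ at most $(C'/q)^{cn}$ by Proposition \ref{prop:W_m}, and Lemma \ref{unconcimpliesuniform} then gives the conclusion. Your version merely makes explicit the conditioning on the realization of $W_m$ and the independence of $X_{m+1}$ from the first $m$ columns, which the paper leaves implicit.
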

In other words this result confirm that our rank evolution matches with Equation \eqref{eqn:uni:iid} of the uniform model.

\begin{proof} By Proposition \ref{prop:W_m} on $\CE_m$, for any $\Ba \perp W_n$, we have $\rho_{F_{m+1}} (\Ba) \le (1/q)^{cn}$. Therefore by Lemma~\ref{unconcimpliesuniform},
$$|\P(X_{m+1} \in W_m | \rk(W_m)=l  \wedge \CE_m) -(1/q)^{n-l}| \le (C'/q)^{cn}.$$
\end{proof}
Now we prove our main results for non-symmetric matrices.

\begin{proof}(of Theorem \ref{thm:main:1}) We first condition on the events in Lemma \ref{lemma:full}, Lemma \ref{lemma:sparse}, and on $\wedge_{m\ge \lfloor (1-6\al)n \rfloor} \CE_m$ from Proposition \ref{prop:W_m}. For each $k \ge \lfloor (1-6\al)n \rfloor =:m_0$, the event $Q(M_n)=n-k$ can be written as follows using the column exposure process
	\begin{align*}
		\P(\rank(M_n)=k) &= \sum_{0< i_1<\dots <i_{k-m_0} \le n-m_0} \P\big( \wedge_j X_{m_0+i_j} \notin W_{m_0+i_j-1} \wedge \mbox{no rank increase at other places}\big).
	\end{align*}
	Now using Proposition~\ref{prop:evolution} we can estimate the RHS above as 
	\begin{align*}
		& = \sum_{0<i_1<\dots<i_{k-m_0}<n-m_0} \P(\text{a uniform matrix of size $n-m_0$ has rank drops at $i_1,\dots, i_{k-m_0}$}) + O(n (C'/q)^{cn})\\
		&= \P(\text{a uniform matrix of size $n-m_0$ has rank $k$}) + O(n (C'/q)^{cn}) \\
		&= Q_\infty(n-k) + O\bigg(1/q^{n-m_0} +n (C'/q)^{cn}) \bigg),
	\end{align*}
where we used \eqref{eqn:FG:1} in the last estimate.
\end{proof}

\begin{proof}(of Theorem \ref{thm:rec})
	For this we observe that the column rank is equal to the row rank and consider the $(n+m)\times n$ transpose of $M$. Our proof of Theorem~\ref{thm:main:1} shows that as we expose the columns the rank statistics match up to an error of type $(C'/q)^{c(m+n)}$ at each step.
\end{proof}

\section{Random symmetric and alternating matrices: proof of Theorem \ref{thm:sym} and \ref{thm:alt}}\label{section:sym}
In this section, if we don't specify otherwise, the arguments will work for both $M_n$ symmetric or alternating simultaneously. We let $M_m$ be the upper left minor of size $m$ of $M_n$. (Hence the notation is different from the non-symmetric case.)

Some of our ideas here are motivated by \cite{CTV, Mac,M3, Ng,V}, and in particular \cite{M3} \footnote{The main result of this paper has been improved substantially in \cite{NgW2}.}. The basic idea of our approach in the symmetric and alternating cases is similar to our approach in the independent case. But instead of considering a column exposure process, to preserve the independence between what we have and haven't seen, we expose one row and column at a time. In other words at the $m^{th}$ step we will have observed the upper left $M_m$ block and we'll be asking for the effect on the corank of expanding this block by one in both directions. 

Throughout this section we will use $X=X_{m+1}$ to refer to the top $m$ cells of column $m+1$ when we are at step $m$, that is $X=\col_{m+1}(M)|_{[m]}$. We also let $x_{m+1}$ be the entry $M_n(m+1,m+1)$, and $H_m$ be the column span of the exposed block at step $m$. This is summarized by Figure~\ref{CornerEP}.

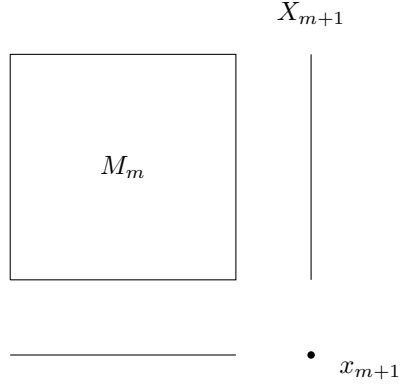
\begin{figure}
	\centering
	\begin{tikzpicture}

		\draw (1,1) -- (4,1) -- (4,4) -- (1,4) -- (1,1);
		\node at (2.5,2.5) {$M_m$};
		\draw (1,0)  -- (4,0);
		\node at (5,0)[circle,fill,inner sep=1pt]{};
		\draw (5,-0.2) node[right=0.1in]{$x_{m+1}$}; 
		\draw (5,1) --(5,4);

		\draw (5,4) node[above=0.1in]{$X_{m+1}$}; 
	\end{tikzpicture}
	\caption{corner exposure process}
	\label{CornerEP}
\end{figure}

The principle difference between the models is that if $X\in H_m$ then in the alternating model $M_{m+1}$ has the same rank as $M_m$. This is an elementary fact shown in Proposition~\ref{alt:even}. In the symmetric model when $X\in H_m$ the rank typically increases by $1$, but it depends on $x_{m+1}$. In Lemma~\ref{lemma:quad} we show the probability the rank increases by $1$ is what we'd predict from the uniform model.

\subsection{Almost full rank for $M_m$ and non-sparsity of generalized normal vectors}\label{subsection:non-sparse:sym}

This subsection is similar to Subsection \ref{subsection:non-sparse} where we show the following Odlyzko's lemma for random symmetric and alternating matrices, here we assume 
$$\sqrt{\al} n \le m \le n$$ 
where we recall that $\al$ is sufficiently small.  
 
\begin{lemma}\label{lemma:O:sym} Let $\eps$ be positive constant that is small but larger than $\sqrt{\al}$. We have
\begin{enumerate} 
	\item $\rank(M_m) \ge (1-\eps)m$ with probability  $1- m(C/q)^{(\eps -\sqrt{\al})m}$. 
\vskip .05in
\item\label{(3)} (non-sparsity of almost normal vectors) The following holds with probability $p$ at least $p\ge 1- (C'/q)^{(1-3\eps - \sqrt{\al})m}$: for any nonzero vector that is orthogonal to any of $(1-\eps)m$ columns of $M_m$  must have at least $\eps m$ non-zero components.
\vskip .05in
\item\label{iii}  (non-sparsity of solution vectors)  Let $Y_0$ be a fixed vector in $\F_q^m$. The following holds with probability at least $1- (C'/q)^{(1-3\eps - \sqrt{\al})m}$: any nonzero vector $\Bv$ for which $M_m \Bv$ agrees with $Y_0$ in at least $(1-\eps)m$ coordinates must have at least $\eps m$ non-zero components.
\end{enumerate}
\end{lemma}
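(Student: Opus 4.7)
My plan is to handle parts (2) and (3) together by a union bound over sparse vectors combined with a local-independence argument, and to address part (1) via a first-moment estimate on the kernel.

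\textbf{For parts (2) and (3),} I would fix a candidate $\Ba$ (resp.\ $\Bv$) of support $S$ with $s := |S| < \eps m$, and call an index $i \in [m]$ \emph{good} if $i \notin S$ and $S \not\subseteq F_i$. By Condition~\ref{condition:F2} together with the symmetry $j \in F_i \iff i \in F_j$, one has $\{i : S \subseteq F_i\} = \bigcap_{j \in S} F_j$, of size at most $|F_{j_0}| < \alpha n \le \sqrt{\alpha}\,m$ for any $j_0 \in S$; combined with $|S|$, at most $(\eps + \sqrt{\alpha})m$ indices are bad. For each good $i$, the sum $\sum_{j \in S} M(i,j)\Ba_j$ contains a near-uniform entry $M(i, j_0)$ with $j_0 \in S \setminus F_i$ having nonzero coefficient, so by the near-uniform property it hits any prescribed value with probability at most $C/q$. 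Crucially, for distinct good $i \ne i'$ (both outside $S$), the entry sets $\{M(i,j) : j \in S\}$ and $\{M(i',j) : j \in S\}$ are disjoint (any coincidence after using $M(i,j) = \pm M(j,i)$ would force $i' \in S$), so the corresponding events are mutually independent. Therefore, for $\Ba$ to be orthogonal to at least $(1-\eps)m$ columns (or, for (3), for $M_m\Bv$ to agree with $Y_0$ on $(1-\eps)m$ indices, which uses the same per-event bound), at least $(1-2\eps-\sqrt{\alpha})m$ of the good independent events must hold; a standard binomial estimate bounds this by $2^m (C/q)^{(1-2\eps-\sqrt{\alpha})m}$, and union-bounding over the at most $(eq/\eps)^{\eps m}$ sparse candidates gives the target $(C'/q)^{(1-3\eps-\sqrt{\alpha})m}$.

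\textbf{For part (1),} I would establish the single-vector bound $\P(M_m v = 0) \le (C/q)^{m - O(\alpha n)}$ for every nonzero $v$ and then apply Markov's inequality to $|\ker M_m|$. Writing $S = \supp v$, the $m - s$ off-support equations $(M_m v)_i = 0$, $i \notin S$, are independent by the disjointness above and contribute $(C/q)^{m - s - \alpha n}$. For the $s$ on-support equations, I condition on all off-diagonal entries: in the symmetric case each such equation reduces to a linear condition on the independent near-uniform diagonal entry $M(i,i)$, contributing $(C/q)^{s - O(\alpha n)}$ (the $O(\alpha n)$ slack accommodates diagonal entries that may themselves lie in a bad set). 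In the alternating case the diagonal is zero and the constraint system has the automatic dependency $v^T M_m v = 0$; a Hall-type matching of on-support rows to distinct off-diagonal entries $M(i,j)$ with $j \in S \setminus (F_i \cup \{i\})$ gives $(C/q)^{s - 1 - O(\alpha n)}$. Multiplying yields the single-vector bound, and then
\[
\E|\ker M_m| \le 1 + q^m (C/q)^{m - O(\alpha n)} \le 2 C^m q^{O(\sqrt{\alpha})m},
\]
using $\alpha n \le \sqrt{\alpha}\,m$. Markov gives $\P(\dim\ker M_m \ge \eps m) \le (C'/q)^{(\eps - O(\sqrt{\alpha}))m}$, which after absorbing constants into $\sqrt{\alpha}$ yields (1).

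The main obstacle is obtaining the single-vector bound uniformly in $v$: the naive argument using only off-support rows produces $(C/q)^{m - s - O(\alpha n)}$, which is trivial for $v$ of large support. The trick of conditioning on off-diagonal entries and charging on-support constraints to the independent diagonal entries (or, in the alternating case, to matched off-diagonal entries via a Hall-type pairing) is essential to closing this gap.
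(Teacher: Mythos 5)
Your parts (2) and (3) follow essentially the same route as the paper: union bound over the at most $\binom{m}{\eps m}q^{\eps m}$ sparse candidates, the observation that $\{i: S\subseteq F_i\}=\bigcap_{j\in S}F_j$ has size at most $\sqrt{\al}\,m$ by the symmetry in Condition~\ref{condition:F2}, a per-row bound of $C/q$ for good rows, and independence of the events $\row_i(M_m)\cdot\Bv=y_i$ across distinct rows $i\notin S$ because the relevant entry sets are disjoint. This part is fine.

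Part (1) is where you diverge from the paper, and where there is a genuine gap. Your symmetric-case treatment of the on-support equations charges each constraint to ``the independent near-uniform diagonal entry $M(i,i)$,'' with ``$O(\al n)$ slack'' for diagonal entries lying in a bad set. But Condition~\ref{condition:F2} only bounds $|F_i|<\al n$ and the number of sets containing a fixed index; it does not prevent $i\in F_i$ for \emph{every} $i$. Indeed $F_i=\{i\}$ for all $i$ (the zero-diagonal model) is one of the paper's motivating examples, and there the entire diagonal is deterministic, so your slack is $m$, not $O(\al n)$, and the single-vector bound $\P(M_m\Bv=0)\le (C/q)^{m-O(\al n)}$ as you derive it collapses for vectors of large support. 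The fix is to abandon the diagonal entirely and run the off-diagonal peeling you sketch for the alternating case in the symmetric case as well: order the on-support rows and observe that the $k$-th one still contains at least $s-k-\sqrt{\al}\,m$ fresh random entries $M(i_k,j)$, $j\in\{i_{k+1},\dots,i_s\}\setminus F_{i_k}$, yielding $(C/q)^{s-1-\sqrt{\al}\,m}$; note that already for $s=2$ with zero diagonal the two on-support equations coincide, so the loss of one exponent is unavoidable and your Hall-type matching must be stated as a greedy/ordered peeling rather than a bare existence claim. Even after this repair, your route gives $\P(\cork(M_m)\ge\eps m)\le C^{(1+o(1))m}q^{-(\eps-O(\sqrt{\al}))m}$, which is nontrivial only for $q\gtrsim C^{1/\eps}$ — acceptable for the theorems as stated but quantitatively weaker than the lemma's claim. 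The paper's proof of (1) sidesteps all of this: it runs the corner exposure process and applies Odlyzko's lemma (Lemma~\ref{odlyzko}) to the fresh column $X_{k+1}$, so that whenever $\cork(M_k)\ge\eps m$ the new column escapes the current column span with probability $1-(C/q)^{\eps m-\sqrt{\al}\,m}$, forcing $\rk(M_{k+1})=\rk(M_k)+2$ and the corank to decrease; a union bound over the $m$ steps finishes, with no single-vector kernel bound needed.
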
 

We will denote the intersection of these events by $\CF_m$. Also, for convenience, if $\Bv$ satisfies \eqref{iii} then we say that $\Bv$ is a {\it generalized normal vector} of $M_m$ (with parameter $\eps$). 
\begin{proof} 
	\begin{enumerate}[(1)]
		\item Let $k\leq m$ be an intermediate step. If $\cork(M_k)\geq r$ then by Lemma~\ref{odlyzko} (where we recall that the number of deterministic components is at most $\al n \le \sqrt{\al} m$) with probability at least $1-(C/q)^{(r-\sqrt{\al} m)}$ we have $X_{k+1}$ is not in the column span of $M_k$ which implies $\rk (M_{k+1}) = \rk(M_k) + 2$.
			
	If $M_m$ has corank greater than $\eps m$ there must be an intermediate step $M_k$ where the corank is greater than or equal to $\eps m$ but the rank doesn't increase by $2$. Taking a union bound over the possible indices we get,
	\begin{align*}
		\P(\cork(M_m)\geq \eps m)\leq m\Big(\frac{C}{q}\Big)^{(\eps -\sqrt{\al}) m}.
	\end{align*}
\item Observe that this case is a special case of case \ref{iii} where $Y_0 = 0$. 
	\item The number of candidate sparse vectors is,
		\begin{align*}
			{m\choose \eps m} q^{\eps m}.
		\end{align*}
			Note that the probability such a vector dotted with row $i$ is equal to any fixed value is bounded above by $\frac{C}{q}$ provided that $F_i$ does not contain the support of the vector. By Condition~\ref{condition:F2} at most $\sqrt{\al} m$ rows do. We don't have independence among all $\Bv\cdot X_i$ but we do have independence between $\Bv\cdot X_i$ and the other columns if $i$ is not in the support of $\Bv$. So because the support is assumed to be at most $\eps m$ the probability of equality is bounded above by,
			\begin{align*}
				{m\choose \eps m} q^{\eps m}\Big(\frac{C}{q}\Big)^{(1-2\eps - \sqrt{\al})m} \leq\Big(\frac{C'}{q}\Big)^{(1-3\eps - \sqrt{\al})m}.
			\end{align*}
			Provided $\eps,\al$ are sufficiently small.
	\end{enumerate}
\end{proof}

\subsection{Anti-concentration probability} \label{subsection:concentration:sym} Here and later we continue to assume that $\sqrt{\al} n \le m \le n$.
Our next step is similar to Subsection \ref{subsection:concentration} where we will show that generalized normal vectors do not have structures with very high probability.  We next recall from Equation~\eqref{eqn:rho} that $ \rho_F(\Ba)= \frac{1}{q} \sum_{t\neq 0} \prod_{i\notin F} f(ta_i)$, where $f(y) = |\sum_{k} c_k e_q(k y)|$, and
 $$|\sup_a \P(X \cdot \Ba = a) - 1/q| \le \rho_F(\Ba).$$ 

There is the following analog of Proposition \ref{prop:W_m} for generalized normal vectors. Loosely, it states that large symmetric near uniform matrices are unlikely to have structured vectors orthogonal to their row span. By symmetry the same result applies to the column span.
\begin{prop}\label{prop:W_m:sym} Assume that $\al$ is sufficiently small.
	There exist constants $c_1,c_2,C'$ such that with probability $1-(C'/q)^{c_1m}$ every nonzero generalized normal vector $\Ba$ of $M_m$ of parameter $\eps=2\sqrt{\al}$ has
	\begin{align*}
		\rho_F(\Ba) \leq (C'/q)^{c_2 m}.
	\end{align*}
\end{prop}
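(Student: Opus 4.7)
My approach mirrors the proof of Proposition \ref{prop:W_m}: count structured vectors using Lemma \ref{lemma:counting}, bound the probability that each is a generalized normal vector of $M_m$, and close by a union bound. The novelty compared to the iid case is step (ii): because the rows of $M_m$ are coupled by symmetry (or antisymmetry), I will use a matching-based exposure argument to extract approximately $m$ independent one-dimensional constraints from the coupled row-equations $M_m\Ba=Y_0$.

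Concretely, take $M=\beta m$ and $K=q^\beta$ with $\beta\in(0,1/2)$ to be optimized, and set $\CB=\{\Ba\in\F_q^m:\rho_F(\Ba)\ge(Kq^{-1/2})^M\}$. Lemma \ref{lemma:counting} yields $|\CB|\le q\cdot 2^m|T|^{m-M-|F|}q^{M+|F|}$ with $|T|\le Cq^{1-2\beta}$, and by Lemma \ref{lemma:O:sym}(\ref{iii}) I may restrict attention to $\Ba\in\CB$ whose support $S$ has size at least $\eps m=2\sqrt\al\,m$. Fix such an $\Ba$ and form the bipartite graph $G$ whose left vertices are the $m$ row-equations and whose right vertices are the near-uniform upper-triangular entries of $M_m$, with an edge from equation $i$ to $m_{ij}$ (resp.\ $m_{ji}$ when $j<i$) whenever $j\in S$ and that entry is near uniform. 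Each equation has degree at least $|S\setminus F_i|\ge \eps m-\al n=\Omega(\sqrt\al\,m)$, while each entry has degree at most $2$; Hall's condition is therefore satisfied for $m$ sufficiently large, producing a matching of $G$ of size $m$ in the symmetric case, or $m-1$ in the alternating case (where the tautology $\Ba^TM_m\Ba=0$ forces the loss of one constraint, and also excludes the zero diagonal). Conditioning on every un-matched entry reduces each matched row-equation to a single linear constraint on a near-uniform variable, which holds with probability $\le C/q$; since the matched variables are distinct, the constraints are independent. Requiring only $(1-\eps)m$ of the $m$ equations to hold thus gives
$$
\P\bigl(\Ba\text{ is a generalized normal vector}\bigr)\ \le\ \binom{m}{\eps m}(C/q)^{(1-\eps)m-1}\ \le\ 2^m(C/q)^{(1-\eps)m-1}.
$$

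Taking the union bound over $\CB$ and choosing $\beta$ bounded away from $0$ and $1/2$ (for concreteness $\beta=1/4$), a direct calculation shows that the product $|\CB|\cdot 2^m(C/q)^{(1-\eps)m}$ has $q$-exponent of order $-\Omega(m)$ for $\al$ small, while the structure threshold $(Kq^{-1/2})^M=q^{-(1/2-\beta)\beta m}$ is simultaneously of the form $(C'/q)^{c_2 m}$. The main obstacle is the combined parameter bookkeeping: the anti-concentration gain $(C/q)^{(1-\eps)m}$ must dominate the counting factor $|\CB|\sim q^{(1-2\beta+2\beta^2)m}$, forcing $\eps$ and $\al$ to be taken sufficiently small relative to $\beta(1-\beta)$. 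A secondary subtlety, genuine but routine, is the Hall-type argument in the alternating case, where one must work around the missing diagonal entries and must absorb the loss of one constraint from the $\Ba^TM_m\Ba=0$ tautology into the $\eps m$ allowable equation failures.
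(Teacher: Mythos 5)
Your overall architecture (count $\CB$ via Lemma \ref{lemma:counting}, bound the probability that a fixed $\Ba\in\CB$ is a generalized normal vector, union bound, tune $\beta$) is exactly the paper's, and your parameter bookkeeping is fine. The gap is in the matching step. A perfect matching in the bipartite support graph gives each equation a \emph{distinct} pivot entry, but distinctness of pivots does not make the events independent, because after you condition on the unmatched entries, equation $i$ is generally \emph{not} ``a single linear constraint on a near-uniform variable'': row $i$ may also contain the pivots of other equations (namely every $p_k=\{k,j(k)\}$ with $j(k)=i$ and $a_k\neq 0$). What you actually have is a linear system in the pivot variables whose coefficient matrix has nonzero diagonal after permutation; that is a nonzero permanent condition, not invertibility. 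It can genuinely fail: with pivots $p_1=\{1,2\}$, $p_2=\{2,3\}$, $p_3=\{1,3\}$ the $3\times 3$ coefficient matrix has determinant $2a_1a_2a_3$, which vanishes in characteristic $2$ (and the symmetric model does not exclude even $q$); more drastically, in the alternating case $\sum_i a_iL_i\equiv 0$ kills a constraint, and you cannot simply ``absorb'' such losses into the $\eps m$ allowed failures, because a matching decomposing into $\Theta(m)$ short cycles could cost a constant fraction of the exponent, and then $(C/q)^{(2/3)(1-\eps)m}$ no longer beats $|\CB|\sim q^{(1-2\beta+2\beta^2)m}$ for any admissible $\beta<1/2$.

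The repair is much simpler than Hall's theorem and is what the paper does: since $\Ba\neq 0$, fix a single index $i$ with $a_i\neq 0$ and condition on \emph{all} entries of $M_m$ outside row and column $i$. Then for each $j\notin F_i\cup\{i\}$ the $j$-th equation involves exactly one unconditioned variable, namely $m_{ji}$, with coefficient $a_i\neq 0$; these entries are genuinely independent and near uniform, so
$$\P\bigl((M_m\Ba)_j=0,\ 1\le j\le (1-\eps)m\bigr)\le (C/q)^{(1-\eps)m-1-\sqrt{\al}\,m},$$
and the union bound over $\CB$ closes as you computed. (Incidentally, with this conditioning you do not even need to invoke Lemma \ref{lemma:O:sym}\eqref{iii} to restrict to non-sparse $\Ba$; any nonzero coordinate suffices.)
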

\begin{proof}
	Let $c_2 = (1/2 - \beta)\beta$ where $\beta$ is the constant in the proof of Proposition \ref{prop:W_m} (chosen appropriately, for instance $\beta=8\sqrt{\al}$ would work.) Let $\CB$ denote the set of $\Bv\in\F_q^m$ with $\rho_F(\Bv) > (C'/q)^{c_2 m}$. From Lemma~\ref{lemma:counting} we know,
	\begin{align*}
		|\CB| < (2C')^m q^{m-\beta m/2}.
	\end{align*}
	Without loss of generality we assume that the first $(1-\eps)m$ entries of $M_m\Ba$ are zero, and our aim is to bound this event,
	\begin{align*}
		\sum_{\Ba\in\CB\setminus\{0\}}\P\Big((M_m\Ba)_i = 0, 1\le i\le (1-\eps)m\Big).
	\end{align*}
	
	For all $\Ba\in\CB\setminus\{0\}$ we have the simple bound,
	\begin{align}\label{eq:orth:sym}
		\P\Big((M_m\Ba)_i = 0, 1\le i\le (1-\eps)m\Big) \leq (C/q)^{(1-\eps)m-1-\sqrt{\al} m}.
	\end{align}

	Because $\Ba\neq 0$ it has some entry $a_i\neq 0$. Conditioning on all the entries of $M_m$ outside of the $i^{th}$ column and row the probability that the $j^{th}$ row of $M_m$ has $r_j(M_m)\cdot Z = 0$, provided $j\notin F_i$, is bounded above by $C/q$ because of the near uniformity of the entry $M_m(j,i)$. The $M_m(j,i)$ for $j$ fixed and not equal to $i$ are independent so we get inequality~\eqref{eq:orth:sym}.

	So  we obtain,
	\begin{align*}
		\sum_{\Ba\in\CB\setminus\{0\}}\P(\Ba \mbox{ is a generalized normal vector of $M_m$} ) &\leq \binom{m}{\lfloor (1-\eps) m\rfloor}(2C')^mq^{m-\beta m/2}(C/q)^{(1-\eps)m-1-\sqrt{\al} m} \\
		&\leq (C''/q)^{-(\beta/2- 3\sqrt{\al}) m}.
	\end{align*}
\end{proof}

\subsection{The rank statistics in the exposure process}\label{subsection:exposure:sym} 
The analysis to understand the rank evolution in the corner exposure process is more involved than in the independent case. The technique hinges on the following so called Decoupling Lemma. The technique was first used to analyze symmetric random matrices by Costello, Tao and Vu \cite{CTV} though the idea is old and common in number theory.

\begin{lemma}[Decoupling]\label{lemma:decoupling1} Assume that $a_{ij} \in \F_q$ and $a_{ij}=a_{ji}$ and $b_i \in \F_q$. Assume that $x_i$ are independent random variables. Then for any index set $I \subset [m]$ we have 
$$\sup_r|\P(\sum_{ij} a_{ij} x_i x_j + \sum_i b_i x_i = r) - 1/q|^4 \le |\P(\sum_{i\in I, j \in I^c} a_{ij} y_i y_j=0) -1/q|,$$
where $y_i\equiv x_i - x_i'$ with $x_i'$ an iid copy of $x_i$.
\end{lemma}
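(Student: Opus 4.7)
The plan is a Fourier-plus-Cauchy--Schwarz decoupling in the spirit of Costello--Tao--Vu \cite{CTV}: pass to the character side, then decouple the quadratic form into its $I$-vs-$I^c$ bilinear cross piece via two Cauchy--Schwarz steps.

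First I pass to Fourier. By Plancherel's identity $\sum_r (\P(Q(x)=r)-1/q)^2 = \tfrac{1}{q}\sum_{t\ne 0}|\E e_q(tQ(x))|^2$, so $\sup_r |\P(Q(x)=r)-1/q|^2 \le \tfrac{1}{q}\sum_{t\ne 0}|\E e_q(tQ(x))|^2$. Squaring and a further Cauchy--Schwarz on the $t$-sum give
\[
\sup_r |\P(Q(x)=r)-1/q|^4 \le \tfrac{1}{q}\sum_{t\ne 0}|\E e_q(tQ(x))|^4.
\]
It therefore suffices to establish the pointwise bound $|\E e_q(tQ(x))|^4 \le \E e_q(2tB(y))$ for each fixed $t\ne 0$, where $B(y):=\sum_{i\in I,\,j\in I^c}a_{ij}y_iy_j = y_I^T A_{IJ}y_{I^c}$; summing and using (for $q$ odd) that $t\mapsto 2t$ permutes $\F_q^*$ then produces $\P(B(y)=0)-1/q$ by Fourier inversion.

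Second, the decoupling. Split $x=(u,v)$ with $u=x_I$, $v=x_{I^c}$, and take an iid copy $x'=(u',v')$, so that $(u,u')$ and $(v,v')$ are iid pairs, independent of each other. Using $A=A^T$, the difference $Q(x)-Q(x')$ decomposes as a pure-$u$ piece $P_u(u,u')=u^T A_{II}u-u'^T A_{II}u'+b_I^T(u-u')$, a pure-$v$ piece $P_v(v,v')$, and the cross term $2(u^T A_{IJ}v-u'^T A_{IJ}v')$. Writing $|\E e_q(tQ)|^2=\E_{x,x'}e_q(t(Q(x)-Q(x')))$ and integrating out $(v,v')$ first, the pure-$v$ weight folds into conjugate-symmetric factors, yielding $|\E e_q(tQ)|^2=|\E_u G_t(u)|^2$ where $|G_t(u)|=|H_t(u)|$ and
\[
H_t(u) := \E_v e_q\!\big(t(2u^T A_{IJ}v+v^T A_{JJ}v+b_J^T v)\big).
\]
Cauchy--Schwarz on the $u$-integral then gives $|\E e_q(tQ)|^4\le(\E_u|H_t(u)|^2)^2$.

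Third, I iterate. Expanding $|H_t(u)|^2$ via an iid copy $v'$ of $v$, the pure-$v$ quadratic/linear weight again contributes only modulus-one factors, and Cauchy--Schwarz on the $(v,v')$-expectation gives $(\E_u|H_t(u)|^2)^2\le\E_{v,v'}|\Omega_t(v-v')|^2$ with $\Omega_t(z):=\E_u e_q(2tu^T A_{IJ}z)$. Expanding $|\Omega_t|^2$ using an iid copy $u'$ of $u$ produces exactly $\E_{u,u',v,v'}e_q(2t(u-u')^T A_{IJ}(v-v'))=\E e_q(2tB(y))$, closing the chain. The main technical obstacle is the Cauchy--Schwarz bookkeeping at each step: one must verify that the pure-diagonal terms $u^T A_{II}u$, $v^T A_{JJ}v$ and the linear terms $b_I^T u,b_J^T v$ are absorbed into modulus-one factors -- here the symmetry $A=A^T$, used in the form $u^T A_{II}u-u'^T A_{II}u'=(u-u')^T A_{II}(u+u')$, together with the independence of the two coordinate halves $(x_I,x'_I)$ and $(x_{I^c},x'_{I^c})$, is what makes the cross-bilinear piece survive alone, with the doubled frequency $2t$.
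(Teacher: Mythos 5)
Your proposal is correct and follows essentially the same route as the paper: pass to additive characters, square via Plancherel/Cauchy--Schwarz over the frequency $t$, and then decouple the bilinear $I$-versus-$I^c$ cross term by two further Cauchy--Schwarz steps with independent copies $x_I'$, $x_{I^c}'$, the diagonal and linear pieces being absorbed into unimodular factors. The only (cosmetic) differences are that you perform the decoupling pointwise in $t$ rather than carrying the $t$-sum throughout, and that you make explicit the reparametrization $t\mapsto 2t$ (hence the need for odd characteristic), which the paper's computation uses implicitly.
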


\begin{proof} For short we write $f(X)=\sum_{ij} a_{ij} x_i x_j + \sum_i b_i x_i$. We write
$$|\P(f(X)= 0) -\frac{1}{q}| = |\frac{1}{q}  \sum_{t\neq 0} \E e_q(-tf(X))|.$$
We then use Cauchy-Schwarz to complete squares, 
\begin{align}\label{eqn:dcpl}
LHS^2 \le  \frac{q-1}{q^2} \sum_{t\neq 0} |\E  e_q(-tf(X))|^2 &\le \frac{q-1}{q^2} \sum_{t\neq 0} \E_{X_I} |\E_{X_{I^c}}  e_q(-tf(X_I,X_{I^c})) |^2  \\
&= \frac{q-1}{q^2} \sum_{t\neq 0} \E_{X_I} \E_{X_{I^c}, X'_{I^c}}  e_q(-t[f(X_I,X_{I^c})-f(X_I,X_{I^c}')]) \nonumber \\
&= \frac{q-1}{q^2} \sum_{t\neq 0}  \E_{X_{I^c}, X'_{I^c}} \E_{X_I} e_q(-t[f(X_I,X_{I^c})-f(X_I,X_{I^c}')]) \nonumber.
\end{align}
Using Cauchy-Schwarz once more, we obtain
\begin{align*}
LHS^4 & \le (\frac{q-1}{q^2})^2 (q-1)\sum_{t\neq 0} \E_{X_{I^c}, X'_{I^c}} \E_{X_I, X_I'} e_q(-t[f(X_I,X_{I^c})-f(X_I,X_{I^c}') -f(X_I',X_{I^c})+f(X_I',X_{I^c}')  ]) \\
&= (\frac{q-1}{q^2})^2 q(q-1) \frac{1}{q} \sum_{t \neq 0} \E_{Y_I,Y_{I_c}} e_q(-t \sum_{i \in I, j\in I^c} a_{ij} y_i y_j)\\
& = (\frac{q-1}{q})^3 (\P(\sum_{i\in I, j \in I^c} a_{ij} x_i y_j =0) -1/q).
\end{align*}
\end{proof}

By Lemma \ref{lemma:O:sym}, with probability at least $1-(C'/q)^{cm}$ we can assume that $M_m$ has rank at least $(1-\eps)m$. Let us consider the event that $M_m$  has rank $m-k$ (where $k\le \eps m$).

\begin{claim}\label{claim:principleminor} Assume that $G_m$  has rank $m-k$, then there is a set $I \subset [m], |I| =m-k$ such that the principle minor matrix $G_{I \times I}$ has full rank $m-k$.
\end{claim}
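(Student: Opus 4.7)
The plan is to reduce the statement to an elementary linear-algebra fact about symmetric/alternating matrices: if such a matrix has rank $r$, then some principal $r \times r$ minor is invertible. The argument will run uniformly for the symmetric and alternating cases (and does not use anything about the random model).

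First, since $G_m$ has rank $r := m-k$, the column space of $G_m$ has dimension $r$, and we can choose an index set $I \subset [m]$ with $|I| = r$ such that the columns $\{c_i(G_m) : i \in I\}$ form a basis of the column space. It then suffices to prove that the principal minor $G_{I \times I}$ has rank $r$.

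I will do this by contradiction. Suppose $G_{I \times I}$ has rank less than $r$. Then there is a nonzero $v \in \F_q^I$ with $G_{I \times I}\, v = 0$. Extend $v$ by zeros to a vector $\tilde{v} \in \F_q^{[m]}$ supported on $I$. The key step is then to verify that $G_m \tilde{v} = 0$, by checking coordinates:
\begin{itemize}
\item For $j \in I$, $(G_m \tilde{v})_j = (G_{I \times I}\, v)_j = 0$ by choice of $v$.
\item For $j \notin I$, since $c_j(G_m)$ lies in the span of $\{c_i(G_m) : i \in I\}$, write $c_j(G_m) = \sum_{i \in I} \alpha_i\, c_i(G_m)$. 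Reading the $\ell$-th entry gives $G_m(\ell,j) = \sum_{i \in I} \alpha_i\, G_m(\ell,i)$ for every $\ell$. Using $G_m(\ell,i) = \pm\, G_m(i,\ell)$ (symmetric: $+$; alternating: $-$, applied twice so signs cancel), one deduces the ``row version'' $G_m(j,\ell) = \sum_{i \in I} \alpha_i\, G_m(i,\ell)$ for every $\ell$. Restricting to $\ell \in I$ and substituting into $(G_m \tilde{v})_j = \sum_{i \in I} G_m(j,i)\, v_i$ gives $\sum_{i \in I} \alpha_i (G_{I \times I}\, v)_i = 0$.
\end{itemize}

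Hence $G_m \tilde{v} = 0$. But $\tilde{v}$ is supported on $I$, so $G_m \tilde{v} = \sum_{i \in I} v_i\, c_i(G_m) = 0$, contradicting the linear independence of the columns $\{c_i(G_m) : i \in I\}$. Thus $G_{I \times I}$ must have rank $r$, completing the proof.

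I expect the main (mild) obstacle to be the bookkeeping in the alternating case: one must check that the sign in $G^T = -G$ indeed cancels when it is used twice, so that the row relation mirrors the column relation and the calculation for $j \notin I$ goes through. Beyond this, the argument is purely formal and does not require anything from the probabilistic setup.
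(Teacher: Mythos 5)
Your proof is correct and follows essentially the same route as the paper: choose $I$ indexing a basis of the column space and use the (anti)symmetry to transfer the column relations to row relations, concluding that $G_{I\times I}$ is nonsingular. The only difference is presentational — you argue by contradiction with an explicit kernel vector, whereas the paper compares ranks of the corresponding row and column submatrices directly — and your sign bookkeeping in the alternating case is fine since the sign is applied twice and cancels.
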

\begin{proof} If suffices to consider the symmetric case. Assume without loss of generality that $\row_1(M_m), \dots, \row_{m-k}(M_m)$ span the row vectors of $G_m$, then in particular $\row_i|_{[m-k]}, i\ge n-k+1$ belong to the span of  $\row_i|_{[m-k]}, 1\le i\le m-k$. This implies that the matrix spanned by the first $m-k$ columns $X_1,\dots, X_{m-k}$ has rank at most $m-k$. On the other hand, by the symmetry of the matrix (for both symmetric and alternating), this column matrix has the same rank as that of the matrix generated by $\row_1(M_m), \dots, \row_{m-k}(M_m)$, which is $m-k$. So the matrix $G_{[m-k] \times [m-k]}$ generated by $\row_i|_{[m-k]}, 1\le i\le m-k$ has rank $m-k$. 
\end{proof}

Therefore in what follows, assume without loss of generality that $\row_1(M_m), \dots, \row_{m-k}(M_m)$ span the row space of $M_m$. 

{\underline{Probability for $\rk(M_{m+1})\le \rk(M_m)+1$}:} When we add the new column $X_{m+1} = (x_{1(m+1)},\dots, x_{(m+1) (m+1)}):=(x_1,\dots, x_{m+1})$ and its transpose (or negative transpose in the alternating case), the event $\rk(M_{m+1})< \rk(M_m)+2$ is equivalent with the event that the extended row vector $\row_1(M_{m+1}),\dots, \row_{m-k}(G_{m+1})$ still generate the space of the vectors $\row_1(M_{m+1}),\dots, \row_m(M_{m+1})$. In particular, this hold if for $m-k+1\le i\le m$
\begin{equation}\label{eqn:x,a}
x_{i} = \sum_{j=1}^{n-k}  a_{ij} x_{j},
\end{equation}
where $a_{ij}$ are determined from $M_m$ via
$$\row_i(M_m) = \sum_{j=1}^{m-k} a_{ij} \row_j(M_m).$$
In other words, equation~\eqref{eqn:x,a} says that the vector $(x_{1}, \dots, x_{m})$ is orthogonal to the vectors $(a_{i1},\dots, a_{i(m-k)}, 0,\dots,-1,0,\dots,0)$, or equivalently it belongs to the hyperplane $H_m$ generated by the column vectors of $M_m$. We next pause to record the evolution of the uniform model (where $(x_{1}, \dots, x_{m})$ is chosen uniformly from $\F_q^m$):  assume that $k\ge 1$. Then (see also \cite[Lemma 4]{Mac})
\begin{equation}\label{ean:uni:sym}
\P_{X_{m+1}}\big(\rk(M_{m+1})\le \rk(M_m)+1|\rank(M_m)=m-k\big) =\frac{1}{q^k}.
\end{equation}
Now, let $\CE_{m}$ be the event from Proposition \ref{prop:W_m:sym}. Then we have $\P(\CE_m)=1-(C'/q)^{-cm}$. By this proposition, and by Lemma \ref{unconcimpliesuniform}, we have

\begin{lemma}\label{lemma:linear} Assume that $k\ge 1$. Then
$$\Big|\P_{X_{m+1}}\big(\rk(M_{m+1})\le \rk(M_m)+1|\rank(M_m)=m-k \wedge \CE_{m} \big) - \frac{1}{q^k} \Big|$$
$$= \Big|\P\big(X_{m+1} \in H_m | \rank(M_m)=m-k \wedge \CE_{m}\big) -\frac{1}{q^k}  |\le (C'/q)^{cn}.$$
\end{lemma}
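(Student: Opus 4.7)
The displayed equality in the statement is the row--reduction equivalence set up immediately before the lemma: conditional on $\rank(M_m)=m-k$ and on a nondegenerate principal $(m-k)\times(m-k)$ minor as provided by Claim \ref{claim:principleminor}, the rank of $M_{m+1}$ fails to jump by $2$ precisely when $X_{m+1}=\col_{m+1}(M)|_{[m]}$ lies in the column span $H_m$. So the real content is the anti-concentration estimate
$$\bigl|\P\bigl(X_{m+1}\in H_m\,\big|\,\rank(M_m)=m-k\wedge \CE_m\bigr)-q^{-k}\bigr|\le (C'/q)^{cn}.$$

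The strategy is to freeze $M_m$ (and therefore $H_m$) inside the conditioning event and exploit independence: the vector $X_{m+1}$, being the above-diagonal part of the newly exposed column, is a near uniform random vector of type $F_{m+1}\cap[m]$, with at most $\sqrt{\al}\,m$ deterministic coordinates by Condition \ref{condition:F2}, and is independent of $M_m$. Hence it is enough to prove the bound pointwise for each admissible realization of $M_m$ and then integrate over the conditioning event.

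To make this work I want to feed $H_m$ into Lemma \ref{unconcimpliesuniform}, which requires $\rho_{F_{m+1}}(\Ba)$ to be small for every nonzero $\Ba\in H_m^\perp$. This is exactly where the event $\CE_m$ comes in, via Proposition \ref{prop:W_m:sym}: any $\Ba\in H_m^\perp\setminus\{0\}$ satisfies $M_m\Ba=0$, so trivially all $m$ (in particular at least $(1-\eps)m$) coordinates of $M_m\Ba$ vanish, meaning that $\Ba$ qualifies as a generalized normal vector with parameter $\eps=2\sqrt{\al}$. On $\CE_m$, Proposition \ref{prop:W_m:sym} then delivers $\rho_{F_{m+1}}(\Ba)\le (C'/q)^{c_2 m}$ uniformly over such $\Ba$, which through \eqref{eqn:rho} gives a uniform bound $|\P(X_{m+1}\cdot\Ba=0)-1/q|\le (C'/q)^{c_2 m}$.

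Plugging this into Lemma \ref{unconcimpliesuniform} with the codimension--$k$ subspace $H_m$ and $\delta=(C'/q)^{c_2 m}$ yields $|\P(X_{m+1}\in H_m)-q^{-k}|\le 2\delta$, which is of the required form $(C'/q)^{cn}$ since $m\ge \sqrt{\al}\,n$. Averaging over $M_m$ inside the conditioning event preserves the bound. The only conceptual point that needs an honest sanity check, and the most likely place to trip up, is the assertion that every nonzero element of $H_m^\perp$ is caught by the generalized normal vector class controlled by $\CE_m$; once that identification is clear, the rest is bookkeeping combining Proposition \ref{prop:W_m:sym} with Lemma \ref{unconcimpliesuniform}.
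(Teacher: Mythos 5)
Your proposal is correct and follows essentially the same route as the paper, which proves the lemma in one line by citing Proposition \ref{prop:W_m:sym} together with Lemma \ref{unconcimpliesuniform}. You have simply made explicit the two points the paper leaves implicit, namely that every nonzero $\Ba\in H_m^\perp$ satisfies $M_m\Ba=0$ and hence is a generalized normal vector covered by $\CE_m$, and that the independence of $X_{m+1}$ from $M_m$ lets one apply the anti-concentration bound pointwise over realizations of $M_m$ inside the conditioning event.
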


{\underline{Probability for $\rk(M_{m+1})= \rk(M_m)$}:} For alternating matrices we have $X_{m+1}\in H_m$ implies $Q(M_{m+1}) = Q(M_m)+1$ by the following elementary proposition.

\begin{proposition}\label{alt:even} For $A$ an alternating $m\times m$ matrix and $\Bx$ a column vector of length $m$ the following block matrix has either corank $Q(A)-1$ or $Q(A)+1$,
	\begin{align*}
	\begin{pmatrix}
		A & \Bx \\
		-\Bx^T &  0
	\end{pmatrix}.
	\end{align*}
\end{proposition}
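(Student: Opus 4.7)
The plan is to combine two classical facts about alternating matrices: (i) over a field of characteristic different from $2$ (which applies since we assume $q$ odd), every alternating matrix has even rank; (ii) adjoining a single row and column to a matrix changes the rank by at most $2$.

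First I would verify that the block matrix $B = \begin{pmatrix} A & \Bx \\ -\Bx^T & 0\end{pmatrix}$ is itself alternating of size $m+1$: its diagonal vanishes (the bottom-right entry is $0$ and $A$ has zero diagonal), and $B^T = -B$ since $A^T = -A$ and the upper-right/lower-left blocks $\Bx$ and $-\Bx^T$ satisfy the required sign flip. By fact (i), both $\rank(A)$ and $\rank(B)$ are even integers. By fact (ii), $\rank(B) \in \{\rank(A), \rank(A)+1, \rank(A)+2\}$. Parity then forces $\rank(B) \in \{\rank(A), \rank(A)+2\}$.

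Translating via $Q(A) = m - \rank(A)$ and $Q(B) = (m+1) - \rank(B)$ gives $Q(B) \in \{Q(A)+1,\, Q(A)-1\}$, which is exactly the claim.

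There is no real obstacle; the only substantive ingredient is the even-rank lemma for alternating matrices, which is a standard consequence of the symplectic decomposition (factor out the radical $\ker A$; on a complement the alternating form is nondegenerate, and a nondegenerate alternating form exists only on an even-dimensional space, obtained by pairing basis vectors into hyperbolic planes). If one wants a self-contained argument, one can instead induct on $m$: if $A \neq 0$, pick $a_{ij} \neq 0$ and use symmetric elementary row/column operations to split off a rank-$2$ symplectic block, then apply the induction hypothesis to the orthogonal complement.
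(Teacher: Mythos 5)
Your proof is correct, but it takes a genuinely different route from the paper's. You derive the dichotomy from the classical even-rank theorem for alternating matrices together with the facts that the bordered matrix $B$ is again alternating and that bordering by one row and one column raises the rank by at most $2$; parity then excludes the middle case. The paper instead argues directly by cases on whether $\Bx$ lies in the column span of $A$: if not, the rank goes up by $2$ (the new column is independent, and the new row $[-\Bx^T\ 0]$ cannot lie in the row span of $[A\ \Bx]$, as that would force $\Bx$ into the column span of $A$); if $\Bx=A\Ba$, the two-line computation $(\Ba^TA\Ba)^T=\Ba^TA^T\Ba=-\Ba^TA\Ba$, hence $\Ba^TA\Ba=0$ and $\Bx^T\Ba=0$ in odd characteristic, shows the new row is already a combination of the old ones, so the rank is unchanged. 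Two remarks on what each approach buys. First, the paper explicitly intends the logical arrow to point the other way: it states that the even-rank fact \emph{is implied by} Proposition~\ref{alt:even} (by induction on the exposure process), so your argument, while not circular (the even-rank lemma has independent proofs, as you sketch), imports a heavier classical input where an elementary computation suffices. Second, the paper's case analysis identifies \emph{which} alternative occurs: corank increases by $1$ exactly when $\Bx$ is in the column span of $A$. This refinement is what is actually used downstream (``$X_{m+1}\in H_m$ implies $Q(M_{m+1})=Q(M_m)+1$''). Your parity argument recovers it with one extra observation --- if $\Bx$ is outside the column span then $\rank(B)\ge\rank(A)+1$, forcing $\rank(A)+2$, and if inside then $\rank(B)\le\rank(A)+1$, forcing $\rank(A)$ --- but you should state this, since the bare dichotomy in the proposition is not quite all the paper needs from it.
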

\begin{proof}
	If $\Bx$ is not in the column span of $A$ then $[-\Bx^T\ 0]$ is not in the row span of $[A\ \Bx]$ so the rank increases by $2$.

	For the other direction, if $\Bx$ is in the column span of $A$, so $A\Ba = \Bx$ for some $\Ba$ we need to show that $-\Bx^T \Ba = 0$ so that the addition of the row will not increase the rank. Because $A$ is alternating we have $A = -A^T$. Therefore $(\Ba^TA\Ba)^T = \Ba^TA^T\Ba = -\Ba^TA\Ba$. This implies $\Ba^TA\Ba = 0$ and $\Ba^T\Bx = 0$.
\end{proof}

For symmetric matrices, $\rank(M_{m+1}) \le \rk(M_m)+1$ does not automatically imply that $\rank(M_{m+1}) =\rk(M_m)$, so we have to consider the events $\rank(M_{m+1}) =\rk(M_m)$ and $\rank(M_{m+1}) =\rk(M_m)+1$ separately. Let's focus on the event that 
$$\rank(M_{m+1}) =\rk(M_m).$$ 
Here besides the event considered in Lemma \ref{lemma:linear}, $\row_{m+1}(M_{m+1})$ also belongs to the subspace generated by $\row_1(M_{m+1}),\dots, \row_m(M_{m+1})$.  Knowing that $M_m$ has rank $m-k$, by Claim \ref{claim:principleminor} we can assume that $M_{I \times I}$ is a submatrix of full rank $m-k$ in $M_m$, for some $I\subset [n]$ and $|I|=m-k$. Let $B=(b_{ij})$ be the inverse of $M_{I \times I}$ in $\F_q$. The exposure of $X=(x_1,\dots, x_{m-k}, x_{m+1})$ would then increase the rank of $M_{m+1}$ by one, except when
\begin{equation}\label{eqn:b_{ij}}
\sum_{ij} b_{ij} x_i x_j +x_{m+1} = 0.
\end{equation}
For short we write $f(X_{m+1}):= \sum_{1\le i,j\le m-k} b_{ij} x_i x_j + x_{m+1}$. We next pause to record the evolution of the uniform model (where $(x_{1}, \dots, x_{m+1})$ is chosen uniformly from $\F_q^m$). The probability $\P(X \in H_m \wedge f(X_{m+1})=0)$ can be simply reduced to $\P(X \in H_m) \times \P_{x_{m+1}} (f(X_{m+1}=0|x_1,\dots, x_m)  =\frac{1}{q^k} \times \frac{1}{q}$ because  $x_{m+1}$ is uniform and independent from the other entries. Hence we have in the uniform case (see also \cite[Lemma 4]{Mac})
\begin{equation}\label{ean:uni:sym'}
\P_{X_{m+1}}\Big(\rank(M_{m+1}) =\rk(M_m) | \rk(M_m)=n-k\Big)  =\frac{1}{q^{k+1}}.
\end{equation}

For our case, we cannot rely on $x_{m+1}$ because this entry can be deterministic. Furthermore, the random entries $x_1,\dots,x_m$ are not uniform either. Nevertheless, we show that the evolution is still asymptotically the same as in the uniform case, under the events $\CE_m \wedge \CF_m$ from Lemma \ref{lemma:O:sym} and Proposition \ref{prop:W_m:sym}. 
\begin{lemma}\label{lemma:quad} Let $k\ge 0$. We have 
	$$\Big|\P_{X_{m+1}}\big(\rank(M_{m+1}) =\rk(M_m) | \rk(M_m)=n-k \wedge \CE_m \wedge \CF_m \big) - q^{-k-1}\Big|$$ 
	$$ =\Big|\P_{X_{m+1}}\big(X \in H_m \wedge f(X_{m+1}) =0| \CE_m \wedge \CF_m\big) - q^{-k-1}\Big| \le (C'/q)^{cm},$$
	where $B = \{b_{ij}\}_{1\leq i,j,\leq m-k}$ is the inverse to the full rank $I\times I$ minor of $M_m$.
\end{lemma}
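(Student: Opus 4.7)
The plan is to expand the joint event $\{X \in H_m \wedge f(X_{m+1}) = 0\}$ by Fourier inversion, isolate a main term of $q^{-k-1}$, and control the residual character sums---linear ones via Proposition \ref{prop:W_m:sym} and genuinely quadratic ones via the Decoupling Lemma \ref{lemma:decoupling1}.

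I would begin by fixing a basis $\Ba^{(1)}, \dots, \Ba^{(k)}$ for $H_m^\perp$ and writing
\begin{align*}
\P\bigl(X \in H_m \wedge f(X_{m+1})=0\bigr) = \frac{1}{q^{k+1}} \sum_{\Bt \in \F_q^k,\, s \in \F_q} \E\, e_q\!\left(\sum_{i=1}^{k} t_i (X\cdot \Ba^{(i)}) + s\, f(X_{m+1})\right).
\end{align*}
The tuple $(\Bt, s) = (0,0)$ produces the main term $q^{-k-1}$. When $s = 0$ and $\Bt \ne 0$, the exponent is a linear form $X \cdot \Bv(\Bt)$ with $\Bv(\Bt) = \sum_i t_i \Ba^{(i)}$ a nonzero element of $H_m^\perp$; on $\CE_m$, Proposition \ref{prop:W_m:sym} gives $\rho_{F_{m+1}}(\Bv(\Bt)) \le (C'/q)^{c_2 m}$, and the Fourier argument of Lemma \ref{unconcimpliesuniform} bounds the total linear contribution by $2(C'/q)^{c_2 m}$.

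The crux is the $s \ne 0$ case, where the exponent $Q_{\Bt,s}$ is a genuine quadratic form in $(x_1,\dots,x_m,x_{m+1})$. Choose a balanced partition $[m-k] = I' \sqcup J'$ with $|I'| = |J'| = \lfloor (m-k)/2 \rfloor$, and apply the Decoupling Lemma \ref{lemma:decoupling1} with $I = I'$. Since $b_{ij} = 0$ whenever either index exceeds $m-k$, the cross-term collapses to $s\, y^T B|_{I' \times J'} z$, and decoupling combined with the elementary identity $|\E e_q(Q)| \le q \sup_r |\P(Q=r) - q^{-1}|$ reduces the task to showing
\begin{align*}
\bigl| \P\bigl(s\, y^T B|_{I' \times J'} z = 0\bigr) - q^{-1} \bigr| \le (C'/q)^{4cm},
\end{align*}
where $y_i = x_i - x'_i$ on $I'$ and $z_j = x_j - x'_j$ on $J'$ are independent symmetrized vectors satisfying $\P(y_i = a), \P(z_j = a) \le C/q$ on the random coordinates.

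The principal obstacle is this decoupled bilinear estimate. Setting $w := s\,B|_{J' \times I'}\,y \in \F_q^{J'}$, Lemma \ref{unconcimpliesuniform} applied fiberwise in $z$ yields $|\P_z(w \cdot z = 0) - q^{-1}| \le \rho^\sharp(w)$, where $\rho^\sharp$ is the analog of $\rho_F$ for the symmetrized distribution of $z$. Let $\CB^\sharp \subseteq \F_q^{J'}$ consist of those vectors with $\rho^\sharp > (C'/q)^{cm}$; a suitable choice of parameters in the Counting Lemma \ref{lemma:counting} yields $|\CB^\sharp| \le q^{(1-c_0)|J'|}$ for some $c_0 > 0$. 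To bound $\P(w \in \CB^\sharp)$, I would show that $L \colon y \mapsto B|_{J' \times I'} y$ is a bijection: using the Schur block-inverse formula for $M_{I\times I}$,
\begin{align*}
B|_{J' \times I'} = -\bigl(M_{J'J'} - M_{J'I'} M_{I'I'}^{-1} M_{I'J'}\bigr)^{-1}\, M_{J'I'}\, M_{I'I'}^{-1},
\end{align*}
so invertibility of $L$ reduces to invertibility of the submatrices $M_{I'I'}$ and $M_{I'J'}$ of $M_m$. The latter holds with probability $1-(C'/q)^{cm}$ by an Odlyzko-type counting identical to that used in Lemma \ref{lemma:O:sym}, and may be folded into $\CF_m$ at no cost. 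Granted this, $L$ is a bijection and the entrywise bound $\P(y_i = a) \le C/q$ gives $\P(y = L^{-1} w) \le (C/q)^{|I'| - \al n}$ for each $w$; summing over $w \in \CB^\sharp$ produces $\P(w \in \CB^\sharp) \le (C'/q)^{cm}$ for sufficiently small $\al$. Combining the two alternatives (unstructured $w$ controlled by $\rho^\sharp$, structured $w$ controlled by the preimage bound) yields the required bilinear estimate. Taking fourth roots and summing over the $(q^{k+1}-1)$ nonzero $(\Bt, s)$ pairs (using $k \le \eps m$) produces a total residual of $(C'/q)^{c'm}$, completing the proof.
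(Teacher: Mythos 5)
Your Fourier decomposition of the joint event, the treatment of the purely linear frequencies via Proposition \ref{prop:W_m:sym} and Lemma \ref{unconcimpliesuniform}, and the reduction of the quadratic frequencies to a decoupled bilinear form all match the paper. The gap is in the decoupled bilinear estimate. You take a \emph{balanced} partition $|I'|=|J'|$ and try to show that $w=s\,B|_{J'\times I'}y$ is unstructured by arguing that $L\colon y\mapsto B|_{J'\times I'}y$ is a bijection, which via the Schur formula you reduce to invertibility of $M_{I'I'}$ and $M_{I'J'}$, claimed to hold with probability $1-(C'/q)^{cm}$ "by Odlyzko-type counting." That claim is false: a \emph{square} near uniform matrix of size $\approx m/2$ is singular with probability $\Theta(1/q)$ (already in the uniform model the singularity probability is $1-\prod_i(1-q^{-i})\ge 1/q$), so this event cannot be folded into $\CF_m$ without destroying the target bound; Odlyzko's lemma only gives exponentially small failure probability for rectangular blocks with a linear excess of rows. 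There is also a conditioning problem: $I$, and hence $I',J'$, are functions of $M_m$ (chosen so that $M_{I\times I}$ is invertible), so $M_{I'J'}$ is not a fresh near uniform matrix. When $L$ has corank $d\ge 1$ --- an event of probability roughly $q^{-d^2}$, far larger than $(C'/q)^{cm}$ for small $d$ --- your preimage bound and your count of $\CB^\sharp\cap\operatorname{im}(L)$ both degrade, and the argument as written does not close.

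The paper avoids all of this by making the partition \emph{unbalanced}: $|I_1|=\lfloor(1-\eps)m\rfloor$ and $|I_2|\le\eps m$ with $\eps=2\sqrt{\al}$. Then for any nonzero realization of $Y'_{I_2}$ the vector $\Bv=B'Y'_{I_2}$ satisfies $(M_m\Bv)|_{I_1}=0$, i.e.\ $M_m\Bv$ vanishes in at least $(1-\eps)m$ coordinates, so $\Bv$ is a generalized normal vector of $M_m$ in the sense of Lemma \ref{lemma:O:sym}\eqref{iii}. On the event $\CE_m$ of Proposition \ref{prop:W_m:sym} every such vector is already known to be unstructured, and this holds deterministically in $Y'_{I_2}$ given $M_m$; no invertibility of any sub-block of $B$, no analysis of the pushforward distribution of $w$, and no new counting argument are needed. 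If you wish to keep your route you would have to control the corank of $B|_{J'\times I'}$, which is precisely the kind of $1/q$-probability event your error tolerance cannot absorb; switching to the unbalanced split and invoking the generalized normal vector property is the fix.
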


\begin{proof} The probability can be rewritten in the following way.
\begin{align*}
\P(X \in H_m \wedge f(X) =0) &= q^{-k-1} \sum_{\xi \in H_n^\perp, t \in \F_q} \E e_q(X \cdot \xi + f(X)t)\\
&=q^{-k-1} + q^{-k-1}\sum_{\xi \in H_m^\perp, t \in \F_q, t\neq 0 }  \E e_q(X \cdot \xi + f(X)t) +  q^{-k-1} \sum_{\xi \neq 0, \xi \in H_m^\perp} \E e_q(X \cdot \xi).
\end{align*}
Note that the third sum is 
$$q^{-k-1} \sum_{\xi \neq 0, \xi \in H_m^\perp} \E e_q(X \cdot \xi) = q^{-k-1} (\sum_{\xi \in H_m^\perp} \E e_q(X \cdot \xi) - 1) =\frac{1}{q}\P(X \in H_n) - q^{-k-1}.$$
Hence the third sum is bound by $(C'/q)^{cm}$ using Lemma \ref{lemma:linear}. 

	For the second summand we can apply Lemma~\ref{lemma:decoupling1} (more precisely Equation \eqref{eqn:dcpl}). With the $X,b_i,x_{m+1}$ in the context of this proof serving the role of $b_i,a_i,-r$ in the context of the lemma respectively. We obtain,
	$$\Big|q^{-k-1} \sum_{\xi \in H_m^\perp, t \in \F_q, t\neq 0} \E e_q(X \cdot \xi + f(X)t)\Big|^{1/4} \le \Big|\P(Y_{I_1} \cdot B Y_{I_2} = 0) - \frac{1}{q}\Big|. $$
	Where $I_1\cup I_2 = I$ and $Y_{I_1},Y_{I_2}$ are random vectors in $\F_q^{I_1},\F_q^{I_2}$ with entries $y_i$ distributed by $x_i-x_i'$. Let $Y_1',Y_2'$ denote random vectors in $\F_q^m$ which restrict to $Y_1,Y_2$ on $I_1,I_2$ and are zero elsewhere. Similarly let $B'$ be the $m\times m$ matrix with $I\times I$ minor equal to $B$ and zeros elsewhere.
Then $Y_{I_1} \cdot B Y_{I_2} = 0$ is equivalent to
$$Y_{I_1}' \cdot B' Y_{I_2}' =0.$$
	Now choose $|I_1|=\lfloor (1-\eps)m \rfloor$ and $I_2=I\backslash I_1$, where $\eps=2\sqrt{\al}$. Therefore the vector $Y_{I_2}'$ is non-zero with probability at least $1-(C'/q)^{\eps m}$ and that
$$M_m(B' Y_{I_2}') = (M_m B') Y_{I_2}'= Y_{I_2}'$$
It follows by Lemma \ref{prop:W_m:sym} that, as $B'Y_{I_2'}$ is a generalized normal vector of $M_m$ on $\CE_m$, and as $k+|I_2|\le \eps n$, we have 
$$\rho(B' Y_{I_2}'|_{I_1}) \le (C'/q)^{cm}.$$
It thus follows by the definition of $\rho$ that 
$$|\P_{Y_{I_1}'}(Y_{I_1}' \cdot B Y_{I_2}' = 0) - \frac{1}{q}| = |\P_{Y_{I_1}'}(Y_{I_1}' \cdot B' Y_{I_2}' = 0) - \frac{1}{q}|\le (C'/q)^{cm}.$$
 \end{proof}
Putting together by changing the constants, we obtain
\begin{prop}[Rank relations for symmetric matrices]\label{prop:rankevolution:sym} Assume that $\sqrt{\al} n \le m\le n$ and the symmetric matrix $M_n$ is as in Theorem \ref{thm:sym}. Then there exist positive constants $c,C'$ such that 
\begin{itemize}
\item Assume that $1\le k\le cm$, then
$$\Big|\P\big(\rk(M_{m+1}) \le \rk(M_m)+1 | \rk(M_m)=m-k \wedge \CE_m \wedge \CF_m)\big) -\frac{1}{q^{k}}\Big| \le \Big(\frac{C'}{q}\Big)^{cm}.$$
\item Assume that $0\le k\le cm$, then
$$\Big|\P\big(\rk(M_{m+1}) = \rk(M_m) | \rk(M_m)=m-k\wedge \CE_m \wedge \CF_m)\big) -\frac{1}{q^{k+1}}\Big| \le \Big(\frac{C'}{q}\Big)^{cm}.$$
\end{itemize}
\end{prop}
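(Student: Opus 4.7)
The plan is to observe that this proposition is essentially a repackaging of Lemma \ref{lemma:linear} and Lemma \ref{lemma:quad}, with the only new work being to absorb the conditioning on $\CF_m$ into the error term. The two bullets correspond exactly to the two rank-evolution mechanisms already analyzed: the first bullet records the probability that the newly exposed row/column stays in the row span of $M_m$ (equivalently, that $X_{m+1} \in H_m$, as discussed just before Lemma \ref{lemma:linear}), and the second bullet records the sub-event where in addition the quadratic constraint $f(X_{m+1})=0$ is satisfied, which is what Lemma \ref{lemma:quad} controls.

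First I would handle the conditioning discrepancy. Lemma \ref{lemma:linear} is stated with conditioning on $\CE_m$ only, whereas the proposition conditions on $\CE_m \wedge \CF_m$. Both $\CE_m$ and $\CF_m$ are events depending only on $M_m$, with $\P(\overline{\CE_m}), \P(\overline{\CF_m}) \le (C'/q)^{cm}$ by Proposition \ref{prop:W_m:sym} and Lemma \ref{lemma:O:sym} respectively. Using the elementary inequality
$$\bigl|\P(A \mid E \wedge F) - \P(A \mid E)\bigr| \;\le\; \frac{2\,\P(\bar F \mid E)}{\P(F \mid E)},$$
valid for any event $A$ and events $E,F$ with $\P(E \wedge F)>0$, and noting that the rank condition $\rk(M_m)=m-k$ has non-negligible probability for $k \le cm$ (so $\P(\CE_m \wedge \CF_m \mid \rk(M_m)=m-k)$ stays bounded away from $0$ up to the range we care about), replacing $\CE_m$ by $\CE_m \wedge \CF_m$ in Lemma \ref{lemma:linear} only inflates the error by a factor absorbed in the constants $c, C'$.

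With this in hand, the first bullet is immediate from Lemma \ref{lemma:linear} together with the identification $\{X_{m+1} \in H_m\} = \{\rk(M_{m+1}) \le \rk(M_m)+1\}$, for both the symmetric and alternating (Proposition \ref{alt:even}) settings, noting that for the alternating case the inclusion is actually an equality, $\rk(M_{m+1})=\rk(M_m)$. The second bullet is then Lemma \ref{lemma:quad} verbatim, since $\CF_m$ already appears in its statement; in the alternating case one invokes Proposition \ref{alt:even} again to conclude that $\rk(M_{m+1})=\rk(M_m)$ whenever $X_{m+1}\in H_m$, so no quadratic correction is needed and the $q^{-k-1}$ bound degenerates appropriately.

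The main obstacle, such as it is, is purely bookkeeping: one has to verify that the swap of conditioning events in Step 1 is benign for every $k$ in the target range $0\le k\le cm$, and that the constants produced by Lemmas \ref{lemma:linear} and \ref{lemma:quad} (which differ) can be unified by shrinking $c$ and enlarging $C'$ appropriately. There is no new probabilistic content beyond what those two lemmas already deliver, which is why the authors simply write ``Putting together by changing the constants, we obtain''.
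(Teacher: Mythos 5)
Your overall reading is right and matches the paper, which indeed offers no proof beyond ``putting together by changing the constants'': the first bullet is Lemma \ref{lemma:linear} and the second is Lemma \ref{lemma:quad} verbatim (the latter already conditions on $\CE_m\wedge\CF_m$, so nothing to do there). The one step where you supply genuinely new reasoning, however, is flawed. You justify adding $\CF_m$ to the conditioning in Lemma \ref{lemma:linear} via the ratio inequality together with the claim that $\P(\rk(M_m)=m-k)$ is ``non-negligible'' for all $k\le cm$, so that $\P(\CE_m\wedge\CF_m\mid\rk(M_m)=m-k)$ stays bounded away from $0$. That claim is false once $k\gg\sqrt{m}$: the probability of corank $k$ is of order $q^{-\Theta(k^2)}$, which for $k$ comparable to $cm$ is far smaller than the bound $(C'/q)^{cm}$ on $\P(\overline{\CF_m})$, so $\P(\overline{\CF_m}\mid \rk(M_m)=m-k\wedge\CE_m)$ need not be small and your swap inequality yields nothing in that range.

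The repair is easy and is the mechanism actually underlying Lemma \ref{lemma:linear}: the estimate $\bigl|\P_{X_{m+1}}(X_{m+1}\in H_m\mid M_m)-q^{-k}\bigr|\le (C'/q)^{cm}$ holds \emph{pointwise} for every realization of $M_m$ lying in $\CE_m$ with $\rk(M_m)=m-k$, since on $\CE_m$ every normal vector of $H_m$ is unstructured and Lemma \ref{unconcimpliesuniform} is a deterministic statement about such subspaces, while $X_{m+1}$ is independent of $M_m$ in the corner exposure process. Because both $\CE_m$ and $\CF_m$ are $M_m$-measurable, averaging this pointwise bound over the event $\{\rk(M_m)=m-k\}\wedge\CE_m\wedge\CF_m$ gives the first bullet with no loss at all; no comparison of conditional probabilities is needed. (Also, strictly speaking Proposition \ref{prop:rankevolution:sym} concerns only the symmetric case; your remarks on the alternating model belong to Proposition \ref{prop:rankevolution:alt}, though they are correct via Proposition \ref{alt:even}.)
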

\begin{prop}[Rank relations for alternating matrices]\label{prop:rankevolution:alt} Assume that $\sqrt{\al} n \le m\le n$ and the alternating matrix $M_m$ is as in Theorem \ref{thm:alt}. Then there exist positive constants $c,C'$ such that the following holds for any $0\le k\le cm$ \footnote{Strictly speaking, Lemma \ref{lemma:linear} just gave $k\ge 1$, but for $k=0$ the bound automatically holds with probability one.}
$$\Big|\P\big(\rk(A_{m+1}) = \rk(A_m)  | \rk(A_m)=m-k\wedge \CE_m \wedge \CF_m)\big) -\frac{1}{q^{k}}\Big| \le   \Big(\frac{C'}{q}\Big)^{cm}.$$
\end{prop}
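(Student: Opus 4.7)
The plan is to exploit the rigid rank-parity constraint provided by Proposition \ref{alt:even}: extending an alternating $m \times m$ matrix by an $(m+1)$-th column $X$, its negative transpose, and a zero diagonal entry changes the rank by either $0$ or $+2$, with the rank being preserved precisely when $X$ lies in the column span $H_m$ of $M_m$. (The added row and diagonal entry impose no separate condition, since for any $\Ba$ with $M_m \Ba = X$ one has $-X^T \Ba = -\Ba^T M_m^T \Ba = \Ba^T M_m \Ba = 0$ from $M_m^T = -M_m$.) Thus the target probability equals $\P(X \in H_m \mid \rk(M_m) = m - k \wedge \CE_m \wedge \CF_m)$, and the task reduces to showing that this is within $(C'/q)^{cm}$ of $q^{-k}$.

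The case $k = 0$ is immediate since $H_m = \F_q^m$ and the conditional probability is $1 = q^{-0}$. For $1 \le k \le cm$ I would proceed as follows. Because $M_m^T = -M_m$, the orthogonal complement $H_m^\perp$ coincides with $\ker(M_m)$, so every nonzero $\Ba \in H_m^\perp$ trivially satisfies the hypothesis of Lemma \ref{lemma:O:sym}(iii) with $Y_0 = 0$ (in fact $M_m\Ba = 0$ holds in all $m$ coordinates, not merely $(1-\eps)m$); in particular $\Ba$ is a generalized normal vector of $M_m$ with parameter $\eps = 2\sqrt{\al}$. On the event $\CE_m$ from Proposition \ref{prop:W_m:sym} we obtain the uniform bound $\rho_{F_{m+1}}(\Ba) \le (C'/q)^{c_2 m}$ for every such $\Ba$, which via \eqref{eqn:rho} yields $|\P(X \cdot \Ba = 0) - 1/q| \le (C'/q)^{c_2 m}$ for every nonzero $\Ba \in H_m^\perp$.

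Feeding this uniform anti-concentration bound into Lemma \ref{unconcimpliesuniform} applied to $H = H_m$ of codimension $k$ gives
\begin{equation*}
\Big|\, \P(X \in H_m) - \frac{1}{q^k} \,\Big| \le 2 \Big(\frac{C'}{q}\Big)^{c_2 m},
\end{equation*}
which is the claimed estimate after a harmless renaming of constants. The only step requiring genuine attention is the reduction to the single event $\{X \in H_m\}$ supplied by Proposition \ref{alt:even}; once this is in place, the decoupling and quadratic-form analysis of Lemma \ref{lemma:quad} that drove the symmetric case becomes unnecessary, because the alternating structure automatically kills the diagonal contribution. This is precisely why the alternating rank-relation proposition carries a single bullet, whereas its symmetric counterpart carries two.
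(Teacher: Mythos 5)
Your proposal is correct and follows essentially the same route as the paper: Proposition~\ref{alt:even} reduces the event to $\{X\in H_m\}$, and the probability of that linear event is then controlled exactly as in Lemma~\ref{lemma:linear}, i.e.\ via Proposition~\ref{prop:W_m:sym} together with Lemma~\ref{unconcimpliesuniform}, with the $k=0$ case handled trivially. Your identification of $H_m^\perp$ with $\ker(M_m)$ is merely a cleaner way of seeing that the normal vectors are generalized normal vectors, which the paper instead reads off from the row-dependency coefficients $a_{ij}$.
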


Now Theorems~\ref{thm:sym} and \ref{thm:alt} are almost in reach. We recall that in the independent case we started from a full rank matrix of size $n \times m_0$ (with $m_0=\lfloor (1-6\al) n \rfloor$) and expose the remaining columns one by one. Aside from the negligible events, we showed there that the rank statistics is asymptotically that of the uniform matrix of size $n-m_0$. The situation in the symmetric and alternating cases is different: instead of a column exposure process we have a corner exposure process, and because of this the situation is more complicated. To motivate the reader, let's say we use Lemma \ref{lemma:O:sym} to evolve from $m_0=\lfloor \sqrt{\al}n\rfloor$ with a matrix $M_{m_0}$ whose corank is at most $\eps m_0$ (but we will not use this information). Assume for now that at each step we append a new {\it uniform} vector to move from a square matrix of size $m$ to a square matrix of size $m+1$, what is the rank statistics of the final matrix? In this case we establish the following, which is Theorem \ref{thm:eventualequiv} restated here for convenience.

\begin{prop}\label{prop:eventualequiv}  Assume that $M_{m_0}$ is an arbitrary matrix of size $m_0$. Let $M_n(M_0)$ be the $n\times n$ random matrix with $M_0$ as its upper right hand corner and the remaining entries above the diagonal sampled uniformly and those below chosen to make the matrix alternating or symmetric. Then,
	\begin{align*}
		\|Q(M_n(M_0)) - Q_\bullet\|_{TV}\leq \frac{3^{n/2}}{q^{n/2-m_0}}.
	\end{align*}
	Where $Q_\bullet$ is either $Q_{sym,\infty}$,$Q_{alt,e}$ or $Q_{alt,o}$ depending on whether $M_n$ is symmetric or alternating and the parity of $n$. 
\end{prop}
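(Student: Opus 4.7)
The plan is to view $M_n(M_{m_0})$ as the endpoint of the corner exposure process from Section~\ref{section:sym}, initialized at the deterministic matrix $M_{m_0}$ and grown by $n-m_0$ new rows and columns whose off-corner entries are independent and exactly uniform over $\F_q$. Because these entries are truly uniform (not merely near uniform), the identities \eqref{ean:uni:sym} and \eqref{ean:uni:sym'} (symmetric case) and the analogous identities that follow from Proposition~\ref{alt:even} (alternating case) hold with equality, so the corank of the growing principal submatrix is a Markov chain $(C_t)_{t\ge 0}$ whose transition kernel $T$ depends only on the current corank. Explicitly, in the symmetric case, from $k$ the chain moves to $k+1,\,k,\,k-1$ with probabilities $q^{-k-1},\,(q-1)q^{-k-1},\,1-q^{-k}$; in the alternating case, to $k\pm 1$ with probabilities $q^{-k},\,1-q^{-k}$. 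Starting at $C_0=k_0:=Q(M_{m_0})\le m_0$ and running $t=n-m_0$ steps produces the law of $Q(M_n(M_{m_0}))$, and $Q_\bullet$ is stationary for $T$, as confirmed by \eqref{eqn:FG:sym} and \eqref{eqn:FG:alt} applied to the uniform matrix (which corresponds to the same chain started from the empty matrix at corank $0$).

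With this setup the task reduces to bounding $\|T^{n-m_0}\delta_{k_0}-Q_\bullet\|_{TV}$. I would construct a grand coupling of $(C_t)$ started from $k_0$ with an auxiliary chain $(\tilde{C}_t)$ started from $Q_\bullet$: at each step both chains consult a common uniform $U_t\in[0,1]$, partitioned by each chain's current thresholds $1-q^{-k}$ and $1-q^{-k-1}$ into down/stay/up intervals. The strong unit downward drift (probability $\ge 1-q^{-1}$ of descending from any corank $k\ge 1$) forces both chains into the low-corank regime within $O(k_0)$ steps; once there, whenever the chains occupy adjacent coranks the asymmetry band between thresholds affords a positive coalescence probability per step. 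The TV bound then follows from the coupling inequality $\|T^{n-m_0}\delta_{k_0}-Q_\bullet\|_{TV}\le \Pr(C_{n-m_0}\ne\tilde{C}_{n-m_0})$.

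The central obstacle is extracting the precise rate $3^{n/2}/q^{n/2-m_0}$ from this coupling. The combinatorial factor $3^{n/2}$ is intended to enumerate non-coalesced trajectory outcomes: three joint moves per step in the symmetric case, and three outcomes per pair of steps in the alternating case, where two consecutive steps must be grouped to respect the parity constraint on corank. The geometric factor $q^{-(n/2-m_0)}$ aggregates the small upward transition probabilities $\le q^{-k-1}$ accumulated along any trajectory that sustains non-coalescence, together with the initial corank deficit of size at most $m_0$ that the chain must amortize before it reaches the asymptotic regime. The delicate bookkeeping is verifying that the combinatorial growth $3^{n/2}$ is compensated by the density of forced down-steps along any surviving uncoupled path, yielding the stated total variation bound.
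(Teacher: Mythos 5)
Your reduction of the statement to a Markov chain on coranks with the transition kernel you write down is correct and is exactly the paper's starting point. But the quantitative core of the proof is missing: you explicitly defer the ``delicate bookkeeping,'' and the route you sketch does not obviously produce the stated rate. A two-chain coupling in which coalescence has ``a positive probability per step when the chains are adjacent'' yields at best a bound of the form $(1-c)^{n}$ with an unspecified $c$, not $3^{n/2}q^{-(n/2-m_0)}$; to extract the $q$-dependence you would have to control, path by path, how the stationary chain and the chain started from $k_0\le m_0$ fail to meet, which is strictly harder than what is actually needed. (There is also a parity wrinkle in the alternating case: the corank at step $t$ has the parity of $m_0+t$, so the ``stationary'' comparison chain must alternate between laws supported on even and odd coranks; this is fixable but not addressed.)

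The idea you are missing is a one-sided hitting-time argument in place of coalescence: show that the \emph{single} chain started from $k_0\le m_0$ hits corank $0$ within the first $n/2$ exposure steps except with probability at most $3^{n/2}/q^{n/2-m_0}$. The key combinatorial point (Claim~\ref{claim:path} / Lemma~\ref{lemma:hitzero}) is that any corank trajectory of length $\eps n$ that never touches $0$ must contain at least $(\eps n-m_0)/2$ upward steps, because the corank can only decrease by one per step and starts at most at $m_0$; from any corank $k\ge 1$ an upward step has probability at most $q^{-k-1}\le q^{-2}$ (in the alternating case one groups steps in pairs), so every such trajectory has probability at most $q^{-(\eps n-m_0)}$, and a union bound over the at most $3^{\eps n}$ trajectories gives the first factor. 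Once the corank hits $0$ at some step $i\le n/2$, the Markov property says the law of the remaining evolution is \emph{exactly} that of a uniform symmetric (or alternating) matrix of size $n-i$, so one does not need to prove any mixing at all: the Fulman--Goldstein bounds \eqref{eqn:FG:sym} and \eqref{eqn:FG:alt} give
\begin{align*}
\|Q(M_{\un,n-i})-Q_\bullet\|_{TV}\le \frac{3}{q^{\,n-i+1}}\le \frac{3}{q^{\,n/2-m_0}},
\end{align*}
which is absorbed into the stated bound. Without this hitting-time step (or a fully executed quantitative substitute for it), your argument establishes the setup but not the inequality.
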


We recall that the random matrix model above  is a special case of Theorem \ref{thm:sym}
 and Theorem \ref{thm:alt}, and hence inevitable. In order to prove Proposition \ref{prop:eventualequiv}, a key problem is that at any given step the corank isn't likely zero. In fact it is likely similar to the final distribution but not within the error tolerance we are aiming for. Our solution to this problem is to show that at {\it some} step of the corner exposure process with high probability the corank is zero. As a consequence, because at that step the corank matches with that of the uniform model at step zeroth, we can evolve as in the uniform model from there on. We will leave a linear length stretch to get our desired final distribution with our desired error bound.

\begin{lemma}\label{lemma:hitzero} Assume that $\eps > \sqrt{\al}$. Let $M_0$ be a symmetric or alternating matrix of corank $x_0  \le  m_0$. If we add the rows and columns according to the uniform model then 
	$$\P( \exists i \le \eps n, x_{i}=0) \geq 1 - \frac{3^{\eps n}}{q^{\eps n -m_0}}.$$
\end{lemma}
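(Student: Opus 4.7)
The plan is to bound $\P(x_i \ge 1 \text{ for all } i \le T)$, where $T := \lceil \eps n \rceil$, by a direct path-counting argument on the corank trajectory $x_0, x_1, \ldots, x_T$. Combining \eqref{ean:uni:sym}, \eqref{ean:uni:sym'}, and the alternating analog via Proposition~\ref{alt:even}, under the uniform corner-exposure process the trajectory is a Markov chain on $\{0, 1, 2, \ldots\}$ whose transitions from corank $k \ge 1$ are as follows. In the symmetric case, $k \to k+1$ with probability $q^{-(k+1)}$, $k \to k$ with probability $(q-1)\, q^{-(k+1)}$, and $k \to k-1$ with probability $1 - q^{-k}$. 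In the alternating case, $k \to k+1$ with probability $q^{-k}$ and $k \to k-1$ with probability $1 - q^{-k}$. By an obvious monotone coupling, the survival probability is non-decreasing in the initial corank, so it suffices to treat the case $x_0 = m_0$.

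The key observation is that each ``up'' step from corank $k \ge 1$ contributes a factor of at most $q^{-2}$ (symmetric) or $q^{-1}$ (alternating); each ``stay'' step (symmetric only) contributes at most $q^{-1}$; and ``down'' steps contribute at most $1$. Hence for any specific trajectory staying $\ge 1$ with $a$ ups, $c$ stays, and $b = T - a - c$ downs, the probability is at most $q^{-(2a + c)}$ in the symmetric case and $q^{-a}$ in the alternating case. The number of trajectories with given composition is bounded by the multinomial coefficient $\binom{T}{a, c, b} \le 3^T$.

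For the trajectory to end at $x_T \ge 1$ starting from $x_0 = m_0$, the endpoint constraint forces $a - b \ge 1 - m_0$, which translates to $2a + c \ge T + 1 - m_0$ in the symmetric case and $a \ge (T + 1 - m_0)/2$ in the alternating case. Summing over admissible compositions and collapsing the geometric tail in $q$, in the symmetric case one obtains
\begin{align*}
\P\left(x_i \ge 1 \text{ for all } i \le T\right) \le 3^T \sum_{s \ge T + 1 - m_0} (s + 1)\, q^{-s} \le \frac{O(T) \cdot 3^T}{q^{T - m_0}},
\end{align*}
and the polynomial prefactor $O(T)$ is absorbed into the exponential constant to yield the stated bound $3^{\eps n}/q^{\eps n - m_0}$. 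The alternating analog, using only up and down steps together with $a \ge (T+1-m_0)/2$, yields a bound of the form $O(2^T / q^{(T-m_0)/2})$; this is the estimate that drives the $3^{n/2}/q^{n/2 - m_0}$ bound in Proposition~\ref{prop:eventualequiv}.

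The main (rather minor) technical point is that the argument above only enforces the endpoint constraint $x_T \ge 1$ rather than the full requirement of staying $\ge 1$ at every intermediate step; since we are deriving an upper bound this relaxation is harmless. The parity restriction in the alternating case (the corank has fixed parity modulo $2$) only shrinks the admissible $(a,b)$ pairs by a constant factor, so it is immaterial to the exponential-order estimate.
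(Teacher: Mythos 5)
Your proof follows the same strategy as the paper's (a union bound over corank trajectories of the uniform corner-exposure chain), but your execution via compositions is cleaner and, in the symmetric case, fully correct: the paper instead identifies the ``most likely'' all-positive path through a sequence of exchange arguments and multiplies its probability by the path count $3^{\eps n}$. Your worry about the $O(T)$ prefactor is unnecessary; since $\sum_{2a+c=s}\binom{T}{a,c,b}\le 3^T$, you can sum directly:
$$\P\big(x_i\ge 1\ \text{for all } i\le T\big)\ \le\ \sum_{s\ge T+1-m_0}q^{-s}\sum_{2a+c=s}\binom{T}{a,c,b}\ \le\ \frac{3^T}{q-1}\,q^{-(T-m_0)},$$
which is the stated bound with no polynomial loss. (The monotone-coupling step is also superfluous: for $x_0\le m_0$ the endpoint constraint $a-b\ge 1-x_0$ is only stronger than $a-b\ge 1-m_0$.)

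The alternating case, however, should not be waved away. Your (correct) computation gives only $O(2^T q^{-(T-m_0)/2})$, which does not imply the lemma as stated, and your remark that it ``drives'' the $3^{n/2}/q^{n/2-m_0}$ bound in Proposition~\ref{prop:eventualequiv} is also not literally true, since $2^{n/2}q^{-(n/2-m_0)/2}$ exceeds $3^{n/2}q^{-(n/2-m_0)}$ for large $q$. In fact your exponent is essentially sharp: in the alternating model the corank has the parity of the matrix size, so a trajectory can survive by oscillating between coranks $1$ and $2$, paying a factor $q^{-1}$ (the up-probability from corank $1$) only on every other step; hence the survival probability really is of order $q^{-T/2}$, and the rate $q^{-(T-m_0)}$ claimed in the lemma is unattainable in the alternating case. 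This points to a flaw in the paper's own proof, where it is asserted that the alternating model has ``the same bounding probability'': the alternating up-step from corank $k$ costs $q^{-k}$ (the sum of the symmetric model's stay and up probabilities), so at corank $1$ it costs $q^{-1}$ rather than $q^{-2}$. The correct conclusion is that the lemma should be weakened to something like $2^{\eps n}q^{-(\eps n-m_0)/2}$ in the alternating case, with the exponents in Proposition~\ref{prop:eventualequiv}, Theorem~\ref{thm:eventualequiv} and the deduction of Theorem~\ref{thm:alt} adjusted accordingly; everything downstream remains exponentially small, so the main results survive with modified constants, but you should state explicitly that you are proving a different (and, for the alternating model, the correct) bound.
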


\begin{proof}(of Lemma \ref{lemma:hitzero}) In what follows we $x_m$ be the corank of the matrix $M_{m_0+m}$. 
	At each step there are at most three possibilities for the corank: it could increase by one, decrease by one or stay the same. Therefore there are at most $3^{\eps n}$ possible paths. At most because paths which have negative corank at some step are impossible and because the corank flatting is not possible in the alternating model. We compute the probability of the most likely path which has corank strictly greater than zero at every step then take a union bound.

\begin{claim}\label{claim:path} Among all sequences of coranks strictly greater than $0$ the one  with highest probability is the one which decreases to 1 and then alternates between 1 and 2. The probability of this path is less than,
	\begin{align*}
		\bigg(\frac{1}{q}\bigg)^{\eps n -m_0}.
	\end{align*}
\end{claim}
\begin{proof}(of Claim \ref{claim:path})
	We prove the claim for the symmetric rank relations and then describe the necessary modifications for the alternating model. We first recall the following transition probabilities for the uniform symmetric model
	$$\P(x_{m+1} =x_m+1) =\frac{1}{p^{x_m+1}} $$
and
$$\P(x_{m+1} =x_{m}-1) = 1- \frac{1}{p^{x_m}}$$
as well as
$$\P(x_{m+1}=x_m) = \frac{1}{p^{x_m}} - \frac{1}{p^{x_m+1}} = \frac{1}{p^{x_m}}(1-1/p) .$$	
	Denote by $+,-,0$ if the corank increases, decreases, and levels respectively, and by a string of those characters a sequence of those moves for our row-column appending process. We compare the probabilities of the following pairs of transitions at step $m$:
	$$\P(+\ 0) =  \frac{1}{q^{x_m+1}} \frac{1}{q^{x_m+1}}(1-1/q)<   \frac{1}{q^{x_m}}(1-1/q) \frac{1}{q^{x_m+1}} = \P(0\ +).$$
Also
$$\P(-\ 0) = (1- \frac{1}{q^{x_m}}) \frac{1}{q^{x_m-1}}(1-1/q) >   \frac{1}{q^{x_m}}(1-1/q) (1- \frac{1}{q^{x_m}}) = \P(0\ -)$$
and
$$\P(-\ +) = (1- \frac{1}{q^{x_m}}) \frac{1}{q^{x_m}} > \frac{1}{q^{x_m+1}}  (1- \frac{1}{q^{x_m+1}})=\P(+\ -).$$

So we have 
$$\P(-...-+...+0) \le \P(-...-+...0+) \le  \P(-...-0+...+).$$ 
Furthermore, 
$$\P(-...-+...+-) \le \P( -...-+...-+) \le \P(-...-+-+...+).$$ 

Repeating the above comparison we can then arrive at sequences of the form 
$$-...-0...0+-0...0+-0...0+-0...0+...+.$$
Observe we can replace the trailing $+...+$ by increasing the number of zeros, and we can move the zeros to the right side without changing the probability, so the maximum sequences might have the form
$$-...-+-+-...+-0...0.$$
Lastly, we have 
$$P(00) =  (\frac{1}{q^{x_m}}(1-1/q))^2 \le P(+\ -)=\frac{1}{q^{x_m+1}} (1- \frac{1}{q^{x_m}})$$ 
even when $x_m=1$. So the most likely sequence is $-...-+-+-...+-$ as claimed.  

	To bound the probability of this sequence observe there are at least $(\eps n -(m_0-x_0))/2$ pluses since the initial rank is at most $m_0-x_0$, that is the maximum length of the leading string of $-$'s. Each plus occurs with probability at most $1/q^2$ so we get the desired bound.

For the alternating model observe the corank either increases or decreases by one at each step. Staying the same isn't an option. The probability of the corank decreasing in the alternating model is equal to the sum of the probability of the rank staying the same and decreasing in the symmetric model. Therefore the alternating model has the same most probable corank sequence with the same bounding probability.
  \end{proof}
  From the claim we have,
	\begin{align*}
		\P( \forall i \le \eps n(x_i > 0)) \leq 3^{\eps n} q^{-(\eps n -m_0)},
	\end{align*}
	completing the proof.
\end{proof}

  \begin{proof}(of Proposition \ref{prop:eventualequiv})
	By the previous lemma with probability $1-3^{\eps n}/q^{\eps n -m_0}$ for some index $i\leq \eps n$, $Q(M_{m_0+i}) = 0$. The corank evolution only depends on the corank in the previous step so the distribution at the $n^{th}$ step is the same as the distribution of the corank of an $(n-i)\times (n-i)$ matrix. By \eqref{eqn:FG:sym} and \eqref{eqn:FG:alt} the total variation distance between this and $Q_\bullet$ is less than $3/q^{ n -\eps n - m_0}$. Choose $\eps n = n/2$ we obtain as claimed.
\end{proof}

\begin{proof}(of Theorem \ref{thm:sym} and \ref{thm:alt})
	Let $M_0$ be the upper left $m_0 \times m_0$ corner of our near uniform random matrix $M_n$. By our rank relations \ref{prop:rankevolution:sym} and \ref{prop:rankevolution:alt} at each of the remaining $n-m_0$ steps the evolution of the uniform and near uniform models differ by an error of at most $(C'/q)^{cm_0}$. Therefore summing over all possible indices of rank drops we have,
	\begin{align*}
		\|Q(M_n(M_0)) - Q(M_n)\|_{TV}\leq  2^n (\frac{C'}{q})^{cm_0}.
	\end{align*}
	By the previous Corollary we have,
	\begin{align*}
		\|Q(M_n(M_0) - Q_\bullet\|_{TV}\leq  \frac{3^{n/2}}{q^{n/2-m_0}}.
	\end{align*}
	Therefore by the triangle inequality we have,
	\begin{align*}
		\|Q(M_n)- Q_\bullet\|_{TV}\leq 2^n (\frac{C'}{q})^{cm_0} +\frac{3^{n/2}}{q^{n/2-m_0}}.
	\end{align*}
\end{proof}

\section{Proof of Theorem \ref{thm:GL:1} and Theorem \ref{thm:GL:2}}\label{section:GL}

Let $A_n\in \GL_n(\F_q)$ be chosen uniformly. Let $X_1,\dots,X_n$ denote the columns of $A_n$. We start with two simple results which will be important for both theorems: a uniform element of $\GL_n(\F_q)$ can be sampled using the column exposure process and a bound on the probability a rectangular submatrix of $A_n$ is full rank.

\begin{claim}\label{claim:GLn} Sample vectors $X_1,\dots, X_n$ one at a time as follows: for $0\le k\le n-1$ sample $X_{k+1}$ uniformly from $\F_q^n\bs \lang X_1,\dots, X_k\rang$. Let $A$ be the matrix with $X_i$ as columns. Then for each $B\in \GL_n(\F_q)$
$$\P(A=B) = \frac{1}{|\GL_n(\F_q)|}.$$
\end{claim}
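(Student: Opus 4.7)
The plan is to compute $\P(A = B)$ directly using the chain rule of conditional probability and observe that the result is independent of $B$.

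First I would fix an arbitrary $B \in \GL_n(\F_q)$ with columns $Y_1, \ldots, Y_n$. Since $B$ is invertible, the columns $Y_1, \ldots, Y_n$ are linearly independent, so for each $k$, the vector $Y_{k+1}$ lies in $\F_q^n \setminus \langle Y_1, \ldots, Y_k \rangle$, which means it is a permissible choice at step $k+1$ of the sampling process (given that the first $k$ samples equal $Y_1, \ldots, Y_k$).

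Next I would compute the conditional probability at each step. Given that $X_1 = Y_1, \ldots, X_k = Y_k$, the vector $X_{k+1}$ is drawn uniformly from the set $\F_q^n \setminus \langle Y_1, \ldots, Y_k \rangle$, whose size is $q^n - q^k$. Thus
\[
\P(X_{k+1} = Y_{k+1} \mid X_1 = Y_1, \ldots, X_k = Y_k) = \frac{1}{q^n - q^k}.
\]
Multiplying these probabilities over $k = 0, 1, \ldots, n-1$ yields
\[
\P(A = B) = \prod_{k=0}^{n-1} \frac{1}{q^n - q^k} = \frac{1}{|\GL_n(\F_q)|},
\]
where the last equality is the well-known product formula $|\GL_n(\F_q)| = \prod_{k=0}^{n-1}(q^n - q^k)$ obtained by counting ordered bases of $\F_q^n$. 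Since this value does not depend on $B$, the distribution of $A$ is uniform on $\GL_n(\F_q)$.

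There is no real obstacle here; the argument is a one-line chain-rule calculation combined with the standard count of invertible matrices. The only thing worth noting is that the sampling procedure is well-defined (the set $\F_q^n \setminus \langle X_1, \ldots, X_k \rangle$ is nonempty for $k < n$), which is immediate since $q^k < q^n$.
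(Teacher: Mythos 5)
Your proof is correct and follows exactly the paper's argument: condition column by column, note that each conditional probability equals $1/(q^n-q^k)$, and multiply to get $\prod_{k=0}^{n-1}(q^n-q^k)^{-1}=|\GL_n(\F_q)|^{-1}$. No issues.
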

\begin{proof} Let $\col_i$ be the $i$-th column vector of $B$. By definition $\P(X_{k+1} = \col_{k+1}(B)|X_1=\col_1(B),\dots, X_k= \col_k(B)) = \frac{1}{q^n - q^k}$, and so $\P(A=B) = \prod_{k=0}^{n-1} \frac{1}{q^n - q^k}.$
\end{proof}

\begin{lemma}\label{GL:submatrix:FR} For $1\le k \leq l \leq n$, let $X_1',\dots,X_k'$ be the vectors obtained from $X_1,\dots, X_k$ by restricting to the first (or last) $l$ coordinates. We have 
	$$\P(X_1',\dots, X_k' \mbox{ are linearly independent}) \geq 1-2/q^{l-k}.$$
	By symmetry for $X_1'',\dots,X_l''$ the restrictions of $X_1,\dots X_l$ to the first (or last) $k$ coordinates we have,
	$$\P(X_1'',\dots, X_l'' \mbox{ span $\F_q^{k}$}) \geq 1-2/q^{l-k}.$$
\end{lemma}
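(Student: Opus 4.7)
The plan is to use the sequential sampling of Claim~\ref{claim:GLn} and do a one-column-at-a-time union bound, then deduce the second (spanning) statement from the first by transposing.

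For part one, I expose $X_1,\dots,X_k$ one at a time, each $X_{i+1}$ being uniform on $\F_q^n\setminus\lang X_1,\dots,X_i\rang$. Assuming inductively that $X_1',\dots,X_i'$ (the restrictions to the chosen $l$ coordinates) are linearly independent—in particular $X_1,\dots,X_i$ are independent in $\F_q^n$—I want to bound the probability that $X_{i+1}'\in\lang X_1',\dots,X_i'\rang$. The number of full vectors $v\in\F_q^n$ whose $l$-coordinate restriction $v'$ lands in this $i$-dimensional subspace is at most $q^i\cdot q^{n-l}$ (free choice on the $n-l$ unrestricted coordinates), while the sample space has size $q^n-q^i\ge q^n/2$ for $q\ge 2$. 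Hence the conditional probability of a bad step is at most $2q^{i-l}$.

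A union bound over $i=0,1,\dots,k-1$ gives
\[
\sum_{i=0}^{k-1}\frac{2}{q^{l-i}} = \frac{2}{q^l}\cdot\frac{q^k-1}{q-1} \le \frac{2}{q^{l-k}},
\]
which is the claimed bound. The geometric sum is dominated by its last term $2/q^{l-k+1}$, and a routine check confirms that the leading constant $2$ remains valid even for $q=2$.

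For the spanning statement, I note that $X_1'',\dots,X_l''\in\F_q^k$ span $\F_q^k$ exactly when the $k\times l$ matrix whose rows are these vectors has rank $k$, equivalently when its transpose (a $l\times k$ matrix) has linearly independent columns. That transpose is obtained by taking the first $k$ columns of $A_n^T$ and restricting each to the first $l$ coordinates. Since $A\mapsto A^T$ preserves the uniform measure on $\GL_n(\F_q)$, this event has the same probability as the event handled in part one, and the same bound $2/q^{l-k}$ applies. The case of the \emph{last} coordinates reduces to the first-coordinate case by conjugating by a permutation matrix, which again preserves uniformity on $\GL_n(\F_q)$.

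No genuine obstacle is anticipated: the only point to watch is that the factor $q^n-q^i$ in the denominator never drops below $q^n/2$ in the relevant range $i\le k-1\le l-1\le n-1$, so the sequential losses remain geometric and can be absorbed into the overall constant $2$.
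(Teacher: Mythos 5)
Your proof is correct and follows essentially the same route as the paper: a sequential exposure of the columns via Claim~\ref{claim:GLn}, a count of $q^{i+n-l}$ bad vectors at step $i$ against a sample space of size $q^n-q^i$, a geometric union bound, and the transposition/row-rank symmetry for the spanning statement. The only difference is cosmetic (you make the $q^n-q^i\ge q^n/2$ bound and the $\GL_n$-transpose invariance explicit where the paper leaves them implicit).
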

\begin{proof} We show that the event that the $X_i'$ are dependent is bounded by $2/q^{l-k}$. If the $X_i'$ are dependent there exists $t < k$ such that $X'_1,\dots,X'_t$ are linearly independent and $X'_{t+1}\in\langle X'_1,\dots,X'_t\rangle$. 
		
	We have that $X_{t+1}' \in \lang X_1',\dots, X_{t}' \rang$ if $X'_{t+1}$ belongs to the set $S_{t}$, where $S_{t}$ is the set whose projection onto the first $l$ coordinates is a subset of $\lang X_1',\dots, X_{t}' \rang$. If $X_1',\dots X_t'$ are linearly independent then $S_{t}$ has size $q^{t}\times q^{n-l} =q^{n+t-l}$. Therefore summing over the indices $t<k$ we obtain,
			\begin{align*}
				\P(X_1',\dots, X_k' \mbox{ are linearly dependent}) &\leq \frac{S_0}{|\overline{W}_{0}|} + \dots + \frac{S_{k-1}}{|\overline{W}_{k-1}|} \\ &\leq \frac{q^{n-l}}{q^n-1}  +\dots + \frac{q^{n+(k-1)-l}}{q^n-q^{k-1}} \\
								&\leq 2(q^{-l} + \dots + q^{k-l-1}) \\
										&\leq 2q^{k-l}.
			\end{align*}
	For the symmetric statement note that $X_1'',\dots,X_l''$ spanning $\F_q^k$ is equivalent to the first $k$ rows' restriction to the first $l$ coordinates being linearly independent.
\end{proof}

\subsection{Proof of Theorem \ref{thm:GL:1}} Let $M_n = A_n - I$. The method is very similar to the one in Section~\ref{section:iid}. Claim~\ref{claim:GLn} justifies our use of a column exposure process. From Lemma~\ref{GL:submatrix:FR} it follows $X_1-\Be_1,\dots, X_k-\Be_k$ are linearly independent for $k<(1/2 - c)n$ with probability $1-q^{-cn}$. By Proposition~\ref{prop:GL:evolution} the normal vectors to $W_k$ are likely unstructured. The proof concludes similarly to the proof of Theorem~\ref{thm:main:1}.

From Lemma \ref{GL:submatrix:FR} it follows that $W_k(M_n)$ has dimension $k$ with probability at least $1-q^{-cn}$ for $k\le 1/2(1 -c)n$ so we may start analyzing the rank evolution process there.  For $k\geq (1/2 + \eps) n$, let $\CF_{GL,k}$ be the event that $X_1',\dots,X_k'$ span $\F_q^{n-k}$ where $X_1',\dots,X_k'$ are the restrictions of $X_1,\dots,X_k'$ to the last $n-k$ coordinates. For other $k$ let $\CF_{GL,k}$ be empty. By Lemma \ref{GL:submatrix:FR}, $\P(\CF_{GL,k})>1-q^{n-2k}$. We now study how the rank evolves after exposing $X=X_{k+1}$. 

\begin{proposition}\label{prop:GL:evolution} Let $k\ge (1/2-\eps)n$. For any nonzero normal vector $\Bw$ of $\lang X_1-\Be_1,\dots, X_k-\Be_k\rang$ we have
	$$\Big|\P((X_{k+1}-\Be_{k+1}) \cdot \Bw=0 | \CF_{GL,k})-1/q\Big|\leq q^{(-1/2+\eps)n}.$$
\end{proposition}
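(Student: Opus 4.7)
The plan is to exploit the explicit conditional description from Claim~\ref{claim:GLn}: given $X_1,\dots,X_k$, the next column $X_{k+1}$ is uniform on $\F_q^n\setminus W_k$, where $W_k:=\langle X_1,\dots,X_k\rangle$ has some dimension $d\le k$. Since $(X_{k+1}-\Be_{k+1})\cdot \Bw=0$ is the same as $X_{k+1}\cdot\Bw=w_{k+1}$, and since $\Bw\ne 0$ yields $|\{\Bx\in\F_q^n:\Bx\cdot\Bw=w_{k+1}\}|=q^{n-1}$, we obtain
\[
\P\bigl((X_{k+1}-\Be_{k+1})\cdot \Bw=0 \mid X_1,\dots,X_k\bigr)=\frac{q^{n-1}-|\{\Bx\in W_k:\Bx\cdot\Bw=w_{k+1}\}|}{q^n-q^d}.
\]
So the whole argument reduces to counting how many points of $W_k$ lie on the affine hyperplane $\{\Bx\cdot\Bw=w_{k+1}\}$.

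The natural dichotomy is whether $\Bw$ is orthogonal to $W_k$. In the generic case $\Bw\notin W_k^\perp$, the map $\Bx\mapsto \Bx\cdot\Bw$ is a nonzero linear functional on $W_k$, so the intersection has size $q^{d-1}$ and the displayed probability is \emph{exactly} $1/q$. In the exceptional case $\Bw\in W_k^\perp$ the intersection has size $q^d$ or $0$ according as $w_{k+1}=0$ or not; in either sub-case a one-line calculation gives $|\P-1/q|\le 2/q^{n-d}\le 2/q^{n-k}$.

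To finish I would rule out the exceptional case on $\CF_{GL,k}$ whenever $k\ge (1/2+\eps)n$. Indeed, combining $\Bw\cdot(X_i-\Be_i)=0$ with $\Bw\perp W_k$ forces $w_i=\Bw\cdot\Be_i=\Bw\cdot X_i=0$ for every $i\le k$; writing $\Bw=(0,\dots,0,\Bw')$ with $\Bw'\in\F_q^{n-k}$, we then have $X_i'\cdot \Bw'=0$ for all $i\le k$, where $X_i'$ denotes the restriction of $X_i$ to the last $n-k$ coordinates. But on $\CF_{GL,k}$ the $X_i'$ span $\F_q^{n-k}$, forcing $\Bw'=0$ and hence $\Bw=0$, a contradiction. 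For $k<(1/2+\eps)n$, $\CF_{GL,k}$ is the whole probability space, yet the crude estimate $2/q^{n-k}\le 2q^{-(1/2-\eps)n}$ already fits inside $q^{(-1/2+\eps)n}$ after a harmless slight readjustment of $\eps$.

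The only mildly delicate point is the bookkeeping at the threshold $k\approx n/2$: for $k$ above the threshold the spanning hypothesis $\CF_{GL,k}$ gives the equality $1/q$ on the nose, while for $k$ below it one does nothing and the direct error bound is already sharp enough. I do not foresee any further substantive obstacles.
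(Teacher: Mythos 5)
Your proposal is correct and follows essentially the same route as the paper: the same dichotomy on whether $\Bw\perp W_k$, the same use of $\CF_{GL,k}$ (via the spanning of the restricted columns) to exclude the orthogonal case when $k\ge(1/2+\eps)n$, and the same crude $O(q^{-(n-k)})$ bound in the middle range $(1/2-\eps)n\le k<(1/2+\eps)n$. Your direct count of points of $W_k$ on the affine hyperplane is just a cleaner packaging of the paper's coefficient-representation argument, and like the paper it yields exactly $1/q$ in the generic case.
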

\begin{proof} Let $Y_1,\dots,Y_{n-k}$ be an orthonormal basis for $\overline{W_k}$. We can view $X_{k+1}$ as a random vector $\sum_{i=1}^k \beta_i X_i + \sum_{i=1}^{n-k} \al_i Y_i$ where $(\al_1,\dots,\al_{n-k},\beta_1,\dots,\beta_k)$ is chosen uniformly from the set of vectors where at least one of $\al_i$ is nonzero. In particular each $\beta_i$ is independent of $\alpha_1,\dots,\alpha_{n-k},\beta_1,\dots,\hat{\beta_i},\dots,\beta_k$.

	{\bf Case 1:}  $k\ge (1/2+\eps)n$.
	
	Because we conditioned on $X_1',\dots, X_k'$ spanning $\F_q^{n-k}$, $\Bw$ is not orthogonal to one of $\Be_1,\dots, \Be_k$. Without loss of generality assume that $\Bw$ is not orthogonal to $\Be_1$. Now as $\Bw$ is orthogonal to $X_1 - \Be_1,\dots, X_k - \Be_k$, we have 
	\begin{align*}\P_{\beta_1}((X_{k+1} -\Be_{k+1}) \cdot \Bw =0) &= \P_{\beta_1}\Big(  (\sum_{i=1}^k \beta_i X_{i} + \sum_{i=1}^{n-k} \al_i Y_i - \Be_{k+1}) \cdot \Bw = 0\Big) \\ &= \P_{\beta_1}\Big(  \beta_1 X_1 \cdot \Bw = a\Big)\\ &= \P_{\beta_1}(\beta_1 \Be_1 \cdot \Bw=a)\\ &=1/q.\end{align*}
		Note $a = -\sum_{i=2}^k \beta_i X_i - \sum_{i=1}^{n-k} + \Be_{k+1}$ is independent of $\beta_1$ and $\Be_1 \cdot \Bw = X_1\cdot\Bw\neq 0$.

	{\bf Case 2:} $(1/2-\eps) n \le k\le (1/2+\eps)n$.
	
	If $\Bw$ is not orthogonal to some $\Be_i, 1\le i \le k$ we can use the argument from Case 1. So assume $\Be_i\cdot\Bw=0$ for $i\leq k$. Because $\Bw\cdot (X_i - \Be_i) = 0$ for $i\leq k$ this also implies $X_i\cdot \Bw = 0$ for $i\leq k$. Write $\Bw = (w_1,\dots,w_n) = (0,\dots, 0, w_{k+1},\dots,w_n)$ with respect to the basis $\{X_1,\dots,X_k,Y_1,\dots,Y_{n-k}\}$. The tuple $(\al_1,\dots,\al_{n-k})$ is chosen uniformly from $\F_q^{n-k}\bs \{\mathbf{0}\}$, and $(w_{k+1},\dots,w_n)$ is a fixed vector in $\F_q^{n-k}\bs \{\mathbf{0}\}$. If $X$ was chosen uniformly from $\F_q^{n-k}$ then $\P((X-\Be_{k+1})\cdot \Bw = 0) = 1/q$. Accounting for whether $X=0$ would or would not have given an equality we get,
	\begin{align*}
		\Big|\P\Big((X-\Be_{k+1})\cdot \Bw = 0\Big) - 1/q\Big| &\leq \max\Big\{\frac{q^{n-k-1}}{q^{n-k}-1} - 1/q,1/q-\frac{q^{n-k-1}-1}{q^{n-k}-1}\Big\} \\
		&\leq \frac{1}{q^{n-k-1}}.
	\end{align*}
\end{proof}
\begin{proof}(of Theorem \ref{thm:GL:1}) Combining Proposition \ref{prop:GL:evolution} and Lemma \ref{unconcimpliesuniform} we then obtain that for any $k\ge (1/2-\eps)n$,  with $X_{k+1}$ being chosen uniformly from $\F_q^n \bs W_k(A_n)$ we have
	$$\Big|\P_{X_{k+1}}(X_{k+1} -\Be_{k+1}\in W_k(M_n)| \codim(W_k(M_n))=l) - \frac{1}{q^{l}}\Big|\leq q^{(-1/2+\eps)n}.$$
	Using this result together with the fact that $\codim(W_{\lfloor (1/2 - \eps)n \rfloor}(M_k)) = 0$ with probability $1-p^{-\eps n}$, we can complete the proof of Theorem \ref{thm:GL:1} the same way we concluded Theorem \ref{thm:main:1}. Namely there are $2^n$ ways the rank could evolve during the column exposure process. Choosing $\eps=1/4$ each occurs with the probability one would predict from the uniform model except for an error bounded by,
	\begin{align*}
		2q^{-n/2} + (1-1/4)nq^{-n/4}.
	\end{align*}
	Where the errors come from the probability $W_{\lfloor n/4 \rfloor}(M_n)$ is not full rank and the error we accumulate in the remaining $\lfloor (1-1/4)n \rfloor$ steps. So for $q>2^4$ we can sum our errors and get our desired bound.
\end{proof}
\subsection{Proof of Theorem~\ref{thm:GL:2}}

In this subsection, let $M_{n'}$ denote the upper left $n'\times n'$ corner of $A_n$ where $n'\leq (1-\eps)n$ and let $X_1',\dots,X_{n'}'$ denote its columns. 

Just as in the independent and perturbed models the goal is to apply Proposition~\ref{unconcimpliesuniform} to determine the transition probabilities. With that in mind we show the following.

\begin{proposition}\label{prop:corner:evolution}
	For all nonzero normal vectors $\Bw$ to $W_k(M_{n'})$ we have 
	$$|\P(\Bw\cdot X_{k+1}' = 0) - 1/q| < \frac{1}{q^{n-k-1}}.$$
\end{proposition}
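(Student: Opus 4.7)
The plan is to reduce the problem to a direct count by exploiting the fact that the normality condition on $\Bw$ (inherited from $W_k(M_{n'})$) actually forces $\Bw$, once lifted trivially to $\F_q^n$, to annihilate the full column span $W_k(A_n)$. Concretely, I would extend $\Bw \in \F_q^{n'}$ to $\hat{\Bw} \in \F_q^n$ by padding with zeros in the last $n - n'$ coordinates. Then $\hat{\Bw} \cdot X_{k+1} = \Bw \cdot X_{k+1}'$, and for every $i \leq k$ we have $\hat{\Bw} \cdot X_i = \Bw \cdot X_i' = 0$, so $\hat{\Bw}$ vanishes identically on $W_k(A_n)$. Since $\Bw \neq 0$, also $\hat{\Bw} \neq 0$.

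Next I would invoke Claim~\ref{claim:GLn} to say that, conditional on $X_1,\dots,X_k$, the vector $X_{k+1}$ is sampled uniformly from $\F_q^n \setminus W_k(A_n)$. Because $A_n \in \GL_n(\F_q)$, any $k$ of its columns are linearly independent, so $|W_k(A_n)| = q^k$ and $|\F_q^n \setminus W_k(A_n)| = q^n - q^k$. Among the $q^n$ vectors in $\F_q^n$, exactly $q^{n-1}$ lie in the hyperplane $\hat{\Bw}^\perp$; and all $q^k$ excluded vectors lie in that hyperplane by the previous paragraph. Hence
\begin{equation*}
\P\bigl(\Bw \cdot X_{k+1}' = 0 \,\big|\, X_1,\dots,X_k\bigr) \;=\; \frac{q^{n-1} - q^k}{q^n - q^k}.
\end{equation*}

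Finally I would just simplify:
\begin{equation*}
\frac{q^{n-1} - q^k}{q^n - q^k} - \frac{1}{q} \;=\; \frac{q(q^{n-1}-q^k) - (q^n - q^k)}{q(q^n - q^k)} \;=\; \frac{-(q-1)}{q(q^{n-k} - 1)},
\end{equation*}
so $\bigl|\P(\Bw \cdot X_{k+1}' = 0) - 1/q\bigr| = \tfrac{q-1}{q(q^{n-k}-1)}$. Using the elementary estimate $q^{n-k} - 1 \geq (q-1)q^{n-k-1}$ (valid for $q \geq 2$, $n - k \geq 1$, which holds since $k \leq n' - 1 \leq (1-\eps)n - 1$), this is bounded by $1/q^{n-k}$ and hence by $1/q^{n-k-1}$, giving the claim.

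There is no real obstacle here: the whole point of the argument is that padding $\Bw$ with zeros turns the apparently delicate conditional probability (over a uniform point in the complement of a $k$-dimensional subspace of $\F_q^n$) into a clean coset count, because the excluded subspace happens to lie entirely in $\hat{\Bw}^\perp$. The only thing to be careful about is to keep the conditioning on $X_1,\dots,X_k$ explicit so that the appeal to Claim~\ref{claim:GLn} is legitimate, and to note that the dimension of $W_k(A_n)$ really is $k$ (which uses invertibility of $A_n$, not just uniformity).
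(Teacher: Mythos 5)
Your proposal is correct and rests on the same key observation as the paper's proof: since $\Bw$ annihilates $X_1',\dots,X_k'$, the zero-padded lift annihilates all of $W_k(A_n)$, so the problem reduces (via Claim~\ref{claim:GLn}) to the probability that a uniform point of $\F_q^n\setminus W_k(A_n)$ lands in a hyperplane containing $W_k(A_n)$ — the paper reaches the same conclusion by decomposing $X_{k+1}$ in a basis adapted to $W_k(A_n)$ and its complement, which yields the identical count $\frac{q^{n-k-1}-1}{q^{n-k}-1}$. Your direct coset count is a clean, slightly more streamlined presentation of the same argument, and your exact computation even gives the marginally sharper bound $1/q^{n-k}$.
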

\begin{proof}
	Let $Y_1,\dots,Y_{n-k}$ be a basis for $W_k(A_n)$. As in Proposition~\ref{prop:GL:evolution} $X_{k+1} = \sum_{i=1}^k \beta_i X_i + \sum_{i=1}^{n-k} \al_i Y_i$ where $(\al_1,\dots,\al_{n-k},\beta_1,\dots,\beta_k)$ is chosen uniformly from the set of vectors where at least one of $\al_i$ is nonzero. Now consider the restriction of this equation to the first $n'$ entries,
	\begin{align*}
		X_{k+1}' = \sum_{i=1}^k \beta_i X_i' + \sum_{i=1}^{n-k} \al_i Y_i'.
	\end{align*}
	Taking the dot product with $\Bw$ we get,
	\begin{align*}
		\Bw\cdot X_{k+1}' &= \sum_{i=1}^k \beta_i X_i'\cdot\Bw + \sum_{i=1}^{n-k} \al_i Y_i'\cdot\Bw \\
		&= \sum_{i=1}^{n-k} \al_i Y_i'\cdot\Bw
	\end{align*}
	
	At least one $Y_i$ must have $Y_i\cdot \Bw\neq 0$. Therefore as in Proposition~\ref{prop:GL:evolution} we get,
	\begin{align*}
		\Big|\P\Big((X_{k+1}'\cdot \Bw = 0\Big) - 1/q\Big| &\leq \max\Big\{\frac{q^{n-k-1}}{q^{n-k}-1} - 1/q,1/q-\frac{q^{n-k-1}-1}{q^{n-k}-1}\Big\} \\
		&\leq \frac{1}{q^{n-k-1}}.
	\end{align*}
\end{proof}
\begin{proof}(of Theorem \ref{thm:GL:2})
	Combining Proposition~\ref{prop:corner:evolution} and Proposition~\ref{unconcimpliesuniform} we obtain,
	\begin{align*}
		|\P(X_{k+1}'\in W_k(M_{n'})|\codim(W_k(M_{n'})) = d) - 1/p^d| &\leq \frac{1}{q^{n-k-1}} \\
		&\leq q^{-\eps n}.
	\end{align*}
	As a consequence, by directly comparing with the rank evolution of the uniform model over size $n'$ (via \eqref{eqn:uni:iid})  and by taking union bound, we obtain a total variation distance of $2^{n'}q^{-\eps n}$ for the rank distribution of this model and the uniform $n'\times n'$ model. This bound can be improved if $n'$ is large, for instance when $n' \ge \eps n$. Indeed, in this case, by Lemma~\ref{GL:submatrix:FR} $W_{n' -\lfloor \eps n \rfloor}(M_{n'})$ has full dimension $n' -\lfloor \eps n \rfloor$ with probability at least $1-2q^{-\lfloor \eps n \rfloor}$. Starting at this step leaves $2^{\lfloor \eps n \rfloor}$ ways for the rank to evolve. As such we obtain a total variation distance of $2q^{-\lfloor \eps n \rfloor} + 2^{\lfloor \eps n \rfloor}q^{-\eps n}$ between the rank distribution of this model and the uniform $n'\times n'$ model. Finally, using Fulman and Goldstein's result we add another term of $\frac{3}{q^{n'}}$ for the total variation distance to the limiting distribution $Q_\infty$.
\end{proof}

\end{document}